\documentclass[11pt,fleqn]{article}

\makeatletter

\def\blfootnote{\gdef\@thefnmark{}\@footnotetext}

\makeatother

\usepackage{amsmath,amssymb,amsthm}
\usepackage{geometry} 
\usepackage[pdfpagemode=UseNone,pdfstartview=FitH,hypertex]{hyperref}
\usepackage{enumitem}

\newcommand{\abs}[1]{\left|#1\right|}
\newcommand{\bdry}[1]{\partial #1}

\newcommand{\A}{{\cal A}}
\newcommand{\D}{{\cal D}}
\newcommand{\F}{{\cal F}}

\newcommand{\dist}[2]{\text{dist}\, (#1,#2)}

\newcommand{\incl}{\hookrightarrow}
\renewcommand{\L}{{\cal L}}
\newcommand{\M}{{\cal M}}
\newcommand{\N}{\mathbb N}
\newcommand{\norm}[2][]{\left\|#2\right\|_{#1}}

\newcommand{\pnorm}[2][]{\if #1'' \left|#2\right|_p \else \left|#2\right|_{#1} \fi}
\newcommand{\R}{\mathbb R}
\newcommand{\RP}{\R \text{P}}
\newcommand{\seq}[1]{\left(#1\right)}
\newcommand{\set}[1]{\left\{#1\right\}}

\newcommand{\w}[1]{\widetilde{#1}}
\newcommand{\weak}{\rightharpoonup}
\newcommand{\Z}{\mathbb Z}

\newcommand{\om}{\int_{\mathbb{R}^N}}

\newcommand{\dev}[1]{\int_{\mathbb{R}^N}|\nabla#1|^2dx}
\newcommand{\sca}[1]{\int_{\mathbb{R}^N}|#1|^qdx}
\newcommand{\cri}[1]{\int_{\mathbb{R}^N}|#1|^{2^*}dx}
\newcommand{\p}{q_{\textrm{rad}}}

\newcommand{\e}{E^{\alpha,\,p}_{\text{rad}}(\R^N)}
\newcommand{\col}[1]{\int_{\mathbb{R}^N}|I_{\alpha/2}\star|#1|^p|^2dx}

\newenvironment{enumroman}{\begin{enumerate}

}{\end{enumerate}}

\newtheorem{corollary}{Corollary}[section]
\newtheorem{lemma}[corollary]{Lemma}
\newtheorem{proposition}[corollary]{Proposition}
\newtheorem{theorem}[corollary]{Theorem}

\theoremstyle{definition}
\newtheorem{definition}[corollary]{Definition}

\theoremstyle{remark}
\newtheorem{example}[corollary]{Example}
\newtheorem{remark}[corollary]{Remark}

\numberwithin{equation}{section}

\title{\bf New solutions to Schr\"{o}dinger--Poisson--Slater equations in Coulomb-Sobolev spaces\blfootnote{{\em MSC2020:} Primary 58E05. Secondary 47J30, 35Q55, 35B06, 35J20. 
\newline \indent\; {\em Key Words and Phrases:} Scaled equations, scaled eigenvalue problems, critical groups, nonlinear linking geometries, local linking based on scaling, Schr\"{o}dinger--Poisson--Slater equation.}}

\author{\bf Artur Jorge Marinho\\
Department of Mathematics\\
Florida Institute of Technology\\
150 W University Blvd, Melbourne, FL 32901-6975, USA\\
\em amarinho2024@my.fit.edu\\
[\medskipamount]
\bf Carlo Mercuri\\
Dipartimento di Scienze Fisiche, Informatiche e Matematiche\\
Universit\`{a} di Modena e Reggio Emilia\\
Via Campi 213/b, 41125 Modena, Italy\\
\em carlo.mercuri@unimore.it\\
[\medskipamount]
\bf Kanishka Perera\\
Department of Mathematics\\
Florida Institute of Technology\\
150 W University Blvd, Melbourne, FL 32901-6975, USA\\
\em kperera@fit.edu}
\date{}

\begin{document}

\maketitle

\begin{abstract}
\noindent We prove existence and multiplicity results for the nonlinear and nonlocal PDE
\[
- \Delta u + (I_\alpha \star |u|^p)\, |u|^{p-2}\, u = f(|x|,u) \quad \text{in } \R^N,
\]
where $N \ge 2$, $I_\alpha : \R^N \setminus \set{0} \to \R$ is the Riesz potential of order $\alpha \in (1,N), p>1,$ and the local nonlinearity $f: [0,\infty) \times \R \rightarrow \R$ is subject to a new class of assumptions. We find solutions to this zero-mass problem in a Coulomb–Sobolev space using a new scaling based approach in critical point theory, by which we classify the possibly different behaviour of the nonlinearity $f$ at zero and at infinity in terms of the scaling properties of the left hand side of the equation. This is accomplished identifying a scaling invariant PDE which can be interpreted as a nonlinear eigenvalue problem, for which a sequence of eigenvalues $\{\lambda_k\}$ is conveniently defined via the $\mathbb{Z}_2$–cohomological index of Fadell and Rabinowitz. This index allows us to use new critical group estimates (and scaling-based linking sets) which might not be possible via the classical genus. 
Within a fairly broad set of parameters $N,\alpha, p$ and class of assumptions on the local nonlinearity $f,$ we establish compactness results for an associated action functional and find multiple solutions as critical points, whose existence and number is sensitive to the ``resonance'' of $f$ with the sequence of eigenvalues for the scaling invariant problem, a construction which is at places reminiscent, in the present nonlinear setting, of the classical Fredholm alternative. As a byproduct of our analysis, letting $p\neq 2$ allows us to capture general nonlinearities $f$ of Sobolev-subcritical, critical, or supercritical growth.
\end{abstract}

\newpage

{\small \tableofcontents}

\newpage

\section{Introduction}

In this paper we prove existence and multiplicity results for the nonlinear and nonlocal PDE
\begin{equation} \label{101}
- \Delta u + (I_\alpha \star |u|^p)\, |u|^{p-2}\, u = f(|x|,u) \quad \text{in } \R^N,
\end{equation}

where a new and broad class of assumptions on the Carath\'{e}odory function $f: [0,\infty) \times \R \rightarrow \R$ is considered. 

Here $N \ge 2, p>1$, $I_\alpha : \R^N \setminus \set{0} \to \R$ is the Riesz potential of order $\alpha \in (1,N)$ defined by
\[
I_\alpha(x) = \frac{A_\alpha}{|x|^{N - \alpha}}, \qquad A_\alpha = \frac{\Gamma\! \left(\frac{N - \alpha}{2}\right)}{\Gamma\! \left(\frac{\alpha}{2}\right) \pi^{N/2}\, 2^\alpha},
\]
and the choice of the constant $A_\alpha$ ensures that the kernel $I_\alpha$ enjoys the semigroup property
\[
I_{\alpha+\beta}=I_\alpha\star I_\beta
\]
for each $\alpha,\beta\in (0,N)$ with $\alpha+\beta<N$, see for example Du Plessis \cite[pg. 73-74]{du1970introduction}.
Hence the corresponding (Hartree) energy term in the equation can be conveniently written, and in certain instances treated, as an $L^2$-like term
\begin{equation}\label{Hartreeterm}
A_\alpha \int_{\R^N} \int_{\R^N} \frac{|u(x)|^p\, |u(y)|^p}{|x - y|^{N - \alpha}}\, dx\, dy= \int_{\R^N} \abs{I_{\alpha/2} \star |u|^p}^2\, dx.
\end{equation}

Using the language of the celebrated work of Berestycki and Lions \cite{MR695535,MR695536}, equation \eqref{101} may be regarded as a zero-mass problem, which is naturally set, unlike its positive-mass variants, in a new {\it Coulomb-Sobolev} space which in general does not coincide with the classical Sobolev space $H^1(\R^N)$; see Mercuri et al. \cite{MR3568051}. \\

When $N=3, \alpha=p=2,$ \eqref{101} appears in the literature, with a number of interesting variants, as an approximation of the Hartree--Fock model for a quantum many-body system of electrons, see e.g. Le Bris and Lions \cite{MR2149087} and references therein, and arise, as highlighted by Sanchez and Soler \cite{MR2032129} in models for semiconductors. We shall refer to equation \eqref{101}, with our ``extended'' range of parameters, as {\em Schr\"odinger--Poisson--Slater's} (Bokanowski et al.\! \cite{MR2013491}), although variants of it may be known under different names, such as {\em Schr\"odinger--Poisson's} (Ambrosetti \cite{MR2465993}), {\em Schr\"odin\-ger--Poisson--$X_\alpha$'s} (Mauser \cite{MR1836081}), or {\em Maxwell--Schr\"o\-din\-ger--Poisson's} (Benci and Fortunato \cite{MR1659454} and Catto et al.\! \cite{MR3078678}). The function $|u|^2 : \R^3 \to \R$ typically represents the density of electrons in the original many-body system, whereas the Coulomb-Riesz term represents the Coulombic repulsion between the electrons. The local term $f(|x|,u)=|u|^{q-2}\, u$ was introduced by Slater \cite{Slater} with $q = 8/3$ as a local approximation of the exchange potential in the Hartree--Fock model (see, e.g., Bokanowski et al.\! \cite{MR2013491}, Bokanowski and Mauser \cite{MR1702877}, and Mauser \cite{MR1836081}). \\

When $N=3, \alpha=p=2,$ \eqref{101} is also related to the Thomas--Fermi--Dirac--von\;Weizs\"acker (TFDW) model of Density Functional Theory (DFT); see, e.g., Lieb \cite{ MR629207,MR641371}, Le Bris and Lions \cite{MR2149087}, Lu and Otto \cite{MR3251907}, Frank et al.\! \cite{MR3762278}, and references therein. In the context of the TFDW model, the local nonlinearity takes the form $f(|x|,u) = u^{5/3} - u^{7/3}$, for which recently Mercuri and Perera \cite{MePe2} have proved the existence of infinitely many solutions at negative energy levels, a phenomenon which could be regarded as complementary to that highlighted in \cite{MR3251907}, which instead provides nonexistence of least energy solutions with a prescribed large mass. \newline 

The mathematical choice of considering in equation \eqref {101} either a more general local nonlinearity $f(|x|,u)$ and set of parameters $N,\alpha, p,$ is motivated by physical considerations again arising from DFT and quantum chemistry. In fact, typical models in this area, such as Kohn-Sham's, involve local nonlinearities $f$ which are not explicitly known; see, e.g., Anantharaman and Canc\`es \cite{MR2569902} and references therein. Moreover, as recently highlighted in Lu et al. \cite{MR3415051}, some models may require in the energy term \eqref{Hartreeterm} a choice of the parameters different from $N=3, \alpha=p=2.$

Allowing $N,\alpha, p$ to vary in a certain admissible range, produces various critical numbers which are responsible for the loss of local and global compactness properties of embeddings into Lebesgue spaces. As shown in \cite{MR3568051}, this is the case of
\begin{itemize}
\item  a {\it Coulomb-Sobolev} exponent $q_{CS}=2(2p+2)/(2+\alpha)$ which together with the classical Sobolev exponent is a threshold for optimal embeddings into $L^q(\R^N);$
\item $q_{\textrm{rad}}=2\, [2p\, (N - 1) + N - \alpha])/(3N + \alpha - 4),$ which characterises for $\alpha>1$ the embeddings of the subspace of radial functions into $L^q(\R^N);$ see Theorem \ref{Theorem 1} below;
\item the exponent $p_{CS}=(N+\alpha)/(N-2),$ which (provided $p<p_{CS}$ or $p>p_{CS}$) determines, when $N\geq 3,$ whether the above thresholds lie below or above the classical Sobolev exponent $2^*=2N/(N-2);$
\item the exponent $q^{\textrm{loc}}_{CS}=2\alpha/(\alpha-2),$ which governs, when $\alpha>2,$ the compactness of the relevant embeddings into $L^p_{\textrm{loc}}(\R^N)$ and the validity of a nonlocal Brezis-Lieb property of the term \eqref{Hartreeterm}.

\end{itemize}

This rich functional analytical and variational scenario not yet explored in full, even when $f(|x|,u)=|u|^{q-2}u$ allows dramatic changes in the geometry of the energy functional associated with \eqref{101}, which may exhibit, according to the different values of $q,$ either a coercive, or a mountain-pass like behavior (see e.g. \cite[Remark 7.1]{MR3568051}); or, the nonstandard structure (in terms of the Euler-Lagrange equation) of a {\it nonlinear eigenvalue problem}. The latter has been highlighted first by Ianni and Ruiz \cite{MR2902293} in the case $N=3, \alpha=p=2, q=3,$ as a consequence of the invariance of \eqref{101} by the scaling $u_t=t^2 u(t \,\cdot\,).$  \\
Our main contribution is to extend some recent existence and multiplicity results of Mercuri and Perera \cite{MePe2}, which deal with the case $N=3, \alpha=p=2,$ adapting the scaling based variational approach developed in \cite{MePe2} to situations in which the mountain-pass geometry, linking geometry based on linear subspaces, or $\Z_2$ symmetry may not be present, making most of the available variational techniques inspired by the classical work of Ambrosetti and Rabinowitz \cite{MR370183}, not directly applicable. Roughly speaking, following \cite{MePe2}, we are able to classify the asymptotic/variational features of the nonlinearity $f$ in terms of the aforementioned eigenvalue problem, a reasoning which is reminiscent, in our nonlinear context, of the classical Fredholm alternative.
We construct minimax eigenvalues of this eigenvalue problem using the $\Z_2$-cohomological index of Fadell and Rabinowitz \cite{MR0478189}, a choice which allows us to use new critical group estimates (and scaling-based linking sets) which might not be possible using the classical genus. Our analysis is complementary to that recently provided in a Sobolev-subcritical regime by Gloss et al. \cite{Gloss}, where a fractional variant of \eqref{101} has been studied in the case $p=2.$ In some of our results we use the assumption $2p<2^*$ to guarantee the boundedness of the Palais-Smale sequences, a condition we hope to relax in future. As a byproduct of our approach (and of the functional setting with $p\neq 2$), we are then able to capture nonlinearities $f$ of Sobolev-subcritical, critical, or supercritical growth.  However, as in  \cite{MePe2}, in our construction we will replace this well-established terminology with terms like ``subscaled'', ``scaled'', and ``superscaled'' to reflect more effectively and classify the various variational regimes which may occur, depending on $f$ and on the values of our parameters.

\section{Existence and multiplicity results in Coulomb-Sobolev spaces}

We seek solutions to our PDE \eqref{101} in the so-called Coulomb-Sobolev space $E^{\alpha,\,p}(\R^N)$ consisting of weakly differentiable functions $u : \R^N \to \R$ for which the norm
\[
\norm{u} := \left[\int_{\R^N} |\nabla u|^2\, dx + \left(\int_{\R^N} \abs{I_{\alpha/2} \star |u|^p}^2\, dx\right)^{1/p}\right]^{1/2}
\]
is finite. We work in the subspace $E^{\alpha,\,p}_{\text{rad}}(\R^N)$ of radial functions in $E^{\alpha,\,p}(\R^N)$. The special case $N = 3$, $\alpha = 2$, and $p = 2$ of this space was studied in Lions \cite{MR636734,MR879032} and Ruiz \cite{MR2679375}, where it was shown that $E^{2,\,2}_{\text{rad}}(\R^3)$ is a separable and uniformly convex Banach space that is embedded in $L^q(\R^3)$ continuously for $q \in (18/7,6]$ and compactly for $q \in (18/7,6)$. The general case $N \ge 2$, $\alpha \in (1,N)$, and $p > 1$ was studied in Mercuri et al.\! \cite{MR3568051}, from which we recall and use the following embedding result.

Set $2^*=2N/(N-2)$ whenever $N\geq3$, and $2^*=\infty$ for $N=2$.

\begin{theorem}[{\cite[Theorems 4 \& 5]{MR3568051}}] \label{Theorem 1}
Let $N \ge 2$, $\alpha \in (1,N)$, $p > 1$, and
\[
q_{\textrm{rad}} = \frac{2\, [2p\, (N - 1) + N - \alpha]}{3N + \alpha - 4}.
\]
\begin{enumroman}
\item\label{i1} If $1/p > (N -2)/(N + \alpha)$, then $E^{\alpha,\,p}_{\text{rad}}(\R^N)$ is embedded in $L^q(\R^N)$ continuously for 
\[
\dfrac{1}{2}-\dfrac{1}{N}\leq\dfrac{1}{q}<\dfrac{1}{\p}
\]

 and compactly if the inequalities are strict.
\item\label{i2} If $1/p < (N-2)/(N + \alpha)$, then $E^{\alpha,\,p}_{\text{rad}}(\R^N)$ is embedded in $L^q(\R^N)$ continuously for 
\[
\dfrac{1}{2}-\dfrac{1}{N}\geq\dfrac{1}{q}>\dfrac{1}{\p}
\]
and compactly if the inequalities are strict.
\end{enumroman}
\end{theorem}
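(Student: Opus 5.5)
The plan is to prove the embedding by combining three ingredients: the Sobolev inequality, a radial pointwise decay estimate of Strauss type adapted to the Coulomb term, and interpolation. Compactness will then follow from a Strauss-type compactness lemma.

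For the endpoint $1/q = 1/2 - 1/N$ (when $N\ge 3$), $\norm[L^{2^*}]{u}\le C\norm[L^2]{\nabla u}\le C\norm{u}$ is just the Sobolev inequality. The other endpoint $q_{\textrm{rad}}$ is governed by the Coulomb term. First I would recall the basic Coulomb–Sobolev inequality from \cite{MR3568051}: for $u \in E^{\alpha,p}$,
\[
\int_{\R^N}|u|^{q_{CS}}\,dx \le C \left(\int_{\R^N}|\nabla u|^2\,dx\right)^{a}\left(\int_{\R^N}\abs{I_{\alpha/2}\star|u|^p}^2\,dx\right)^{b},
\]
with $a,b$ fixed by scaling. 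This comes from writing $\int|u|^{p+1}$ as a pairing of $\nabla u$ with $|u|^{p-1}u$ and controlling it through the $\dot H^{-1}$-type duality that the Riesz potential provides.

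Next comes the radial improvement. For radial $u$ and $R>0$, I would test the Riesz energy against spherical shells. Since $I_\alpha$ is positive and $|x-y|\le 2R$ on $B_R$, one gets
\[
\int_{\R^N}\abs{I_{\alpha/2}\star|u|^p}^2\,dx \ge c R^{\alpha-N}\left(\int_{B_R}|u|^p\,dx\right)^2 .
\]
Radial symmetry gives a finer estimate on thin annuli: a shell of radius $r$ and width $\delta$ interacts with itself at cost of order $r^{N-1}\delta$ in measure, and with kernel averaged over the sphere. Combining this with the one-dimensional radial estimate $|u(r)-u(s)|\le \int_r^s|u'|$ and Cauchy–Schwarz with weight $t^{N-1}$, I would derive a pointwise decay bound
\[
|u(x)|\le C\norm{u}^{\theta}|x|^{-\gamma},
\]
with $\gamma$ chosen so that $|u|^q$ is integrable at infinity exactly when $q>q_{\textrm{rad}}$, in case (i). In case (ii) the roles are reversed, because the decay/scaling exponents swap sign relative to $2^*$. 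The continuous embedding then follows by interpolating between the Sobolev endpoint and these near-$q_{\textrm{rad}}$ weighted bounds, splitting $\int |u|^q$ into $B_R$ and $B_R^c$. The continuous bound at the endpoint itself is not claimed at $q_{\textrm{rad}}$, which matches the strict inequality in the statement. The dichotomy $1/p \gtrless (N-2)/(N+\alpha)$ is exactly the sign of $q_{\textrm{rad}}-2^*$, so the two cases are handled by the same computation with inequalities reversed.

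For compactness with strict inequalities, I would take a bounded sequence $u_n \weak u$ in $E^{\alpha,p}_{\text{rad}}$. Local compactness (Rellich on $B_R$ with $L^q$, $q<2^*$) handles the bounded region. The uniform decay bound with a strictly better exponent makes $\int_{B_R^c}|u_n|^q$ small uniformly, and small integrals near the origin, or at the other end in case (ii), are controlled by strict interpolation away from the endpoints. This is the Strauss compactness lemma argument.

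The main obstacle will be the radial decay estimate that extracts the exponent $q_{\textrm{rad}}$ from the nonlocal term: it requires a sharp lower bound of the Riesz self-interaction for radial densities on annuli. Here I would first try comparing $I_\alpha$ averaged over spheres with $|r-s|^{\alpha-1}$-type one-dimensional kernels; this is where $\alpha>1$ enters.
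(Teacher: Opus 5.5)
The paper does not prove this theorem; it is quoted from Mercuri--Moroz--Van Schaftingen. So your sketch has to be judged on its own. Its architecture is sensible: Sobolev at the $2^*$ end, a Strauss-type bound from thin radial shells, Rellich on bounded annuli, and uniform smallness at the two ends. But the decisive step fails as written. You claim a pointwise bound $|u(x)|\le C\|u\|^\theta|x|^{-\gamma}$ with $\gamma$ ``chosen so that $|u|^q$ is integrable at infinity exactly when $q>q_{\textrm{rad}}$''. The ``near-$q_{\textrm{rad}}$ weighted bounds'' you then interpolate with are never specified. The exponent is not free. Write $D=\int_{\R^N}\abs{I_{\alpha/2}\star|u|^p}^2dx$. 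For $\delta\le r$ your shell argument gives $\int_{\{r<|x|<r+\delta\}}|u|^p\le Cr^{(N-\alpha)/2}D^{1/2}$, using only $|x-y|\le 4r$, and $|u(r)-u(s)|\le C\delta^{1/2}r^{(1-N)/2}\|\nabla u\|_2$. Optimizing in $\delta$ yields
\[
|u(x)|\le C\,\|\nabla u\|_2^{\frac{2}{p+2}}D^{\frac{1}{2(p+2)}}|x|^{-\gamma},\qquad \gamma=\frac{3N+\alpha-4}{2(p+2)}.
\]
If the optimal width exceeds $|x|$, taking $\delta=|x|$ instead produces $|x|^{-(N+\alpha)/(2p)}D^{1/(2p)}$. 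This is dominated by $|x|^{-\gamma}$ for $|x|\ge1$ in case (i) and for $|x|\le1$ in case (ii). The rate is sharp when $\alpha>1$: for large $R$ in case (i), a shell of radius $R$, width $R^{(p(N-1)+2-N-\alpha)/(p+2)}$ and height $R^{-\gamma}$ has norm of order one. However,
\[
\frac{N}{\gamma}-q_{\textrm{rad}}=\frac{2\,[(N+\alpha)-p(N-2)]}{3N+\alpha-4}.
\]
This is positive in case (i), so decay alone makes $\int_{|x|>1}|u|^q$ finite only for $q>N/\gamma$; for $N=3$, $\alpha=p=2$ that means $q>24/7$, not $q>18/7$. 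It is negative in case (ii), and near the origin decay alone only reaches $q<N/\gamma<q_{\textrm{rad}}$. Interpolation with $L^{2^*}$ cannot repair this, because at the relevant end it goes the wrong way: $q<2^*$ at infinity in case (i), and $q>2^*$ near $0$ in case (ii). The same defect invalidates the uniform tail estimate in your compactness step.

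The missing idea is to use the Coulomb term a second time, as integral information on dyadic annuli $A_R=B_{2R}\setminus B_R$. Your ball estimate already gives $\int_{A_R}|u|^p\le CR^{(N-\alpha)/2}D^{1/2}$. Since $q_{\textrm{rad}}-p=\frac{(N-\alpha)(p+2)}{3N+\alpha-4}>0$, you may split $\int_{A_R}|u|^q\le\|u\|_{L^\infty(A_R)}^{q-p}\int_{A_R}|u|^p\le CR^{\frac{N-\alpha}{2}-\gamma(q-p)}$ for $\|u\|\le1$. The exponent is negative iff $q>q_{\textrm{rad}}$ and positive iff $q<q_{\textrm{rad}}$. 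Summing over $R=2^k$, with $k\ge0$ in case (i) and $k\le0$ in case (ii), gives the continuous bound at that end. For $q$ strictly inside the range it also gives uniformly small tails, which is what compactness needs; the other end is handled by Sobolev and H\"older as you say.

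Two smaller points. First, you misplace where $\alpha>1$ enters. The lower bound you need holds for every $\alpha$. What $\alpha>1$ buys is that a thin shell's self-energy is comparable to $R^{\alpha-N}\big(\int|u|^p\big)^2$, which makes the exponent sharp and gives $q_{\textrm{rad}}<q_{CS}$ (the two coincide at $\alpha=1$). The $|r-s|^{\alpha-1}$ behaviour of spherical averages belongs to the $\alpha<1$ regime. Second, the Coulomb--Sobolev inequality plays no role here, and the pairing argument you describe is only the $\alpha=2$ case. Finally, for $N=2$ the stated range includes $q=\infty$, which is false: cut off $\log\log(e/|x|)$ near the origin. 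So in that case only finite $q$ can be proved.
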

\begin{remark}
An interesting feature of our functional setting is that $E^{\alpha,\,p}_{\text{rad}}(\R^N)\not\subset L^{q_{\textrm{rad}}}(\R^N);$ see \cite{MR2679375, MR3568051} .
\end{remark}

In view of Theorem \ref{Theorem 1}, a natural growth condition for the local nonlinearity $f$ is
\begin{equation} \label{102}
|f(|x|,t)| \le a_1\, |t|^{q_1 - 1} + a_2\, |t|^{q_2 - 1} + a(x) \quad \text{for a.a.\! } x \in \R^N \text{ and all } t \in \R,
\end{equation}
where $a_1, a_2 > 0$, and $a\in L^r(\R^N)$ with either
\[
\dfrac{1}{2}-\dfrac{1}{N}\leq\dfrac{1}{q_2}<\dfrac{1}{q_1}<\dfrac{1}{\p}
\]
if $1/p>(N-2)/(N+\alpha)$, and some $r>1$ where
\begin{enumerate}[label=(\alph*)]
    \item $r\in(1,\p/(\p-1))$ if $N=2$,
    \item $r\in [2^\ast/(2^\ast - 1),q_{\textrm{rad}}/(q_{\textrm{rad}} - 1))$ if $N\geq3$
\end{enumerate}
or 
\[
\dfrac{1}{2}-\dfrac{1}{N}\geq\dfrac{1}{q_2}>\dfrac{1}{q_1}>\dfrac{1}{\p}
\]
if $1/p<(N-2)/(N+\alpha)$ and some $r>1$ where $(q_{\textrm{rad}}/(q_{\textrm{rad}} - 1),2^\ast/(2^\ast - 1)]$.

 Under this growth assumption, the associated energy functional
\[
\Phi(u) = \frac{1}{2} \int_{\R^N} |\nabla u|^2\, dx + \frac{1}{2p} \int_{\R^N} \abs{I_{\alpha/2} \star |u|^p}^2\, dx - \int_{\R^N} F(|x|,u)\, dx, \quad u \in E = E^{\alpha,\,p}_{\text{rad}}(\R^N),
\]
where $F(|x|,t) = \int_0^t f(|x|,\tau)\, d\tau$ is the primitive of $f$, is well-defined, of class $C^1$, and its critical points are the weak solutions to equation \eqref{101}.

The nonlinear eigenvalue problem
\begin{equation} \label{103}
- \Delta u + (I_\alpha \star |u|^p)\, |u|^{p-2}\, u = \lambda\, |u|^{q_{CS}-2}\, u \quad \text{in } \R^N,
\end{equation}
will play a key role in our approach to existence and multiplicity for equation \eqref{101}. In what follows we simply set $$q = q_{CS}=2\, (2p + \alpha)/(2 + \alpha).$$ This eigenvalue problem is not only nonlinear, but also nonhomogeneous in the sense that the three terms that appear in it contain different powers of $u$. However, it has a certain scaling invariance property. In fact, let $\sigma = (2 + \alpha)/2(p - 1)$ and consider the scaling
\[
E^{\alpha,\,p}_{\text{rad}}(\R^N) \times [0,\infty) \to E^{\alpha,\,p}_{\text{rad}}(\R^N), \quad (u,t) \mapsto u_t
\]
defined as
\[
u_t(x) = t^\sigma u(tx)
\]
if $1/p > (N-2)/(N+\alpha)$ and
\[
u_t(x) = \begin{cases}
t^{- \sigma} u(x/t) & \text{if } t > 0\\[5pt]
0 & \text{if } t = 0
\end{cases}
\]
if $1/p < (N-2)/(N+\alpha)$. Setting
\[
\L(u) = - \Delta u + (I_\alpha \star |u|^p)\, |u|^{p-2}\, u, \qquad \L_\lambda(u) = \L(u) - \lambda\, |u|^{q-2}\, u,
\]
we have
\[
\L_\lambda(u_t)(x) = \begin{cases}
t^{(4p + \alpha - 2)/2(p - 1)}\, \L_\lambda(u)(tx) & \text{if } 1/p > (N-2)/(N+\alpha)\\[10pt]
t^{- (4p + \alpha - 2)/2(p - 1)}\, \L_\lambda(u)(x/t) & \text{if } 1/p < (N-2)/(N+\alpha)
\end{cases}
\]
for $t > 0$, so if $u$ is a solution of the equation \eqref{103}, then the entire $1$-parameter family of functions $\set{u_t : t \ge 0}$ are, too. We shall refer to it as a scaled eigenvalue problem. We note that $q \in (q_{\textrm{rad}},2^\ast)$ if $1/p > (N-2)/(N+\alpha)$ and $q \in (2^\ast,q_{\textrm{rad}})$ if $1/p < (N-2)/(N+\alpha)$. We will see that the first eigenvalue $\lambda_1$ is positive and there exists an unbounded sequence of eigenvalues
\[
\lambda_1 \le \lambda_2 \le \cdots
\]
(see Theorem \ref{Theorem 301}).

The scaled eigenvalue problem \eqref{103} leads to a natural classification of the local nonlinearity $f$ in equation \eqref{101} in terms of its scaling properties. Assume that the limit
\[
l_\infty = \lim_{|t| \to \infty}\, \frac{f(|x|,t)}{|t|^{q-2}\, t}
\]
exists uniformly a.e. If $1/p > (N-2)/(N+\alpha)$ and
\[
\dfrac{1}{2}-\dfrac{1}{N}\leq\dfrac{1}{r}<\dfrac{1}{\p}
\]
 then
\[
\big(\L(u_t) - |u_t|^{r-2}\, u_t\big)(x) = t^{(4p + \alpha - 2)/2(p - 1)}\, \big(\L(u) - t^{(r-q) \sigma}\, |u|^{r-2}\, u\big)(tx),
\]
so we will say that
\begin{enumroman}
\item $f$ is subscaled if $l_\infty = 0$,
\item $f$ is asymptotically scaled if $l_\infty \in \R \setminus \set{0}$,
\item $f$ is superscaled if $l_\infty = \pm \infty$.
\end{enumroman}
On the other hand, if $1/p < (N-2)/(N+\alpha)$ and $r \in [2^\ast,q_{\textrm{rad}})$, then
\[
\big(\L(u_t) - |u_t|^{r-2}\, u_t\big)(x) = t^{- (4p + \alpha - 2)/2(p - 1)}\, \big(\L(u) - t^{(q-r) \sigma}\, |u|^{r-2}\, u\big)(x/t)
\]
for $t > 0$, so we will say that
\begin{enumroman}
\item $f$ is subscaled if $l_\infty = \pm \infty$,
\item $f$ is asymptotically scaled if $l_\infty \in \R \setminus \set{0}$,
\item $f$ is superscaled if $l_\infty = 0$.
\end{enumroman}
We observe and elaborate on it in the sequel that, as the variational features of the energy functional $\Phi$ associated with the equation \eqref{101} are sensitive to the various asymptotic regimes of $f$ classified above, we will apply suitable (and new) minimax principles, accordingly. Our results include sufficient conditions for the existence of nontrivial solutions in the case $f(|x|,0) \equiv 0$.
Let us now state our main existence and multiplicity results, whose proof is presented in Section \ref{Section 4} after a section on the relevant preliminaries in critical point theory (Section 3).

\subsection{The case \texorpdfstring{$1/p > (N-2)/(N+\alpha)$}{1/p > (N-2)/(N+alpha)}}

First we consider the case $1/p > (N-2)/(N+\alpha)$ and assume that $f$ satisfies the growth condition \eqref{102} with 
\[
\dfrac{1}{2}-\dfrac{1}{N}\leq\dfrac{1}{q_2}<\dfrac{1}{q_1}<\dfrac{1}{\p}.
\]
 We will consider the subscaled, asymptotically scaled, superscaled, and Sobolev critical cases. 

\subsubsection{Subscaled case}

In the subscaled case we have the following existence result.

\begin{theorem} \label{Theorem 2}
Let $1/p > (N-2)/(N+\alpha)$. Assume that $f$ satisfies \eqref{102} with 
$q_{\textrm{rad}} < q_1 < q_2 < q$. Then equation \eqref{101} has a solution.
\end{theorem}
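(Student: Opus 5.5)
We will show that $\Phi$ is coercive and weakly lower semicontinuous on $E = E^{\alpha,\,p}_{\text{rad}}(\R^N)$, so that it attains a global minimum, which is then a critical point and hence a weak solution of \eqref{101}. No nontriviality is claimed, since $f(|x|,0)$ need not vanish. We use the direct method because in the subscaled regime the scaling-invariant terms dominate the local term both at zero and at infinity.

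\textbf{Step 1: bound $F$ by Lebesgue norms.} From \eqref{102},
\[
|F(|x|,t)| \le \frac{a_1}{q_1}|t|^{q_1} + \frac{a_2}{q_2}|t|^{q_2} + a(x)|t|.
\]
Hence $\int F(|x|,u) \le C(|u|_{q_1}^{q_1} + |u|_{q_2}^{q_2} + |a|_r |u|_{r'})$. By the assumed range of $r$, the conjugate exponent $r'$ lies in $(q_{\textrm{rad}}, 2^*]$ (or in $(q_{\textrm{rad}},\infty)$ when $N=2$). Since $q_{\textrm{rad}} < q_1 < q_2 < q < 2^*$, Theorem \ref{Theorem 1} gives compact embeddings into $L^{q_1}$ and $L^{q_2}$, and a continuous embedding into $L^{r'}$, compact unless $r'=2^*$.

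\textbf{Step 2: weak lower semicontinuity.} The space $E$ is reflexive; we take this as known from \cite{MR3568051}, as the uniform convexity recalled above for the special case suggests. The gradient term is convex and continuous. The Hartree term is weakly lower semicontinuous by Fatou's lemma along a.e.\ convergent subsequences, or because it is convex in $|u|^p$ through the $L^2$ norm. The term $\int F$ is weakly continuous: the compact embeddings make $u\mapsto \int F(|x|,u)$ weakly sequentially continuous. In the borderline case $r'=2^*$, we control the linear part by the weak convergence $u_n\weak u$ in $L^{2^*}$, which is enough because $a\in L^r=(L^{2^*})'$.

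\textbf{Step 3: coercivity, via a scaling-adapted interpolation.} This is the main step. Write $A=\int|\nabla u|^2$ and $B=\int|I_{\alpha/2}\star|u|^p|^2$, so $\norm{u}^2 = A + B^{1/p}$. For $q_{\textrm{rad}}<s<2^*$ the proofs in \cite{MR3568051} give a Gagliardo--Nirenberg type inequality
\[
|u|_s^s \le C\,A^{\theta_1}B^{\theta_2},
\]
with exponents fixed by the two scalings $u(\lambda\cdot)$ and $\mu u$. Under the scaling $u_t$, the quantities $A$ and $B$ both scale like $t^{2\sigma+2-N}$ and $|u|_q^q$ scales the same way, so for $s=q$ we get $\theta_1+\theta_2=1$. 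For $s<q$ the homogeneity in $\mu$ gives $2\theta_1+2p\theta_2=s$; since $s<q$ we have $\theta_2<\theta_2(q)$, which forces $\theta_1+\theta_2<1$ below $q$. Along sequences with $\norm{u}\to\infty$ at least one of $A,B$ is unbounded. If $A\to\infty$, or $B\to\infty$, a Young's inequality split of $A^{\theta_1}B^{\theta_2}$ gives
\[
|u|_s^s \le \varepsilon(A+B) + C_\varepsilon
\]
whenever $\theta_1+\theta_2<1$. The linear term is handled the same way, since $r'<q$ in the relevant range. Hence $\Phi(u)\ge c(A+B)-C\to\infty$, using $B\ge B^{1/p}-1$.

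The delicate part is justifying the interpolation inequality with $\theta_1+\theta_2<1$ for every $s\in(q_{\textrm{rad}},q)$, including the case where the exponent on $B$ must be compared with $1/p$. I would first try to extract it from the radial Strauss-type estimate plus the Coulomb bound in \cite{MR3568051}. Failing that, I would argue by contradiction using the scaling: normalize a sequence with $\Phi(u_n)$ bounded and $\norm{u_n}\to\infty$ by $v_n=(u_n)_{t_n}$ with $\norm{v_n}=1$, and derive a contradiction from the fact that $q_1,q_2<q$ makes the $F$ term lower order in $t_n$.
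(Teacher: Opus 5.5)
Your proof is correct in substance, but it takes a different route from the paper.

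\textbf{The paper's route.} The paper deduces Theorem \ref{Theorem 2} from the abstract subscaled result, Theorem \ref{Theorem 302}. Part (i) of Lemma \ref{Lemma1} gives $\w{f}(u_t)v_t=o(t^s)\norm{v}$ as $t\to\infty$, uniformly on bounded sets. Hence $\int_{\R^N}F(|x|,u_t)\,dx=o(t^s)$ uniformly on the bounded manifold $\M_s$, so $\Phi(u_t)\ge t^s(1-o(1))$ and $\Phi$ is coercive. Compactness of $\w{f}$ together with $(H_7)$ (Lemma \ref{lemma1bis}) gives (PS) via Proposition \ref{PS compact}. The infimum of a coercive (PS) functional that is bounded on bounded sets is then a critical value. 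This uses only the one-parameter scaling $u_t$, which is exactly your fallback argument. It needs neither weak lower semicontinuity nor a two-parameter inequality, and the same estimate is reused for Theorems \ref{Theorem 3}, \ref{Theorem 11}, \ref{Theorem 12}.

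\textbf{Your route.} Your direct method avoids (PS) and $(H_7)$ entirely. The price is an inequality that uses both amplitude and dilation. That inequality is not actually delicate. Write $\ell$ for your Lebesgue exponent, to avoid a clash with the scaling exponent $s$. Apply the continuous embedding of Theorem \ref{Theorem 1} to $u(\lambda\,\cdot)$ and choose $\lambda^{\kappa}=B^{1/p}/A$, where $\kappa=2-N+(N+\alpha)/p>0$. This yields $|u|_\ell^\ell\le C A^{\theta_1}B^{\theta_2}$ with $D=N-(N-2)\ell/2$, $\theta_2=D/(p\kappa)$ and $\theta_1=\ell/2-D/\kappa$. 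Here $\theta_2>0$ for $\ell<2^*$. Also $\theta_1>0$ exactly when $\ell>2Np/(N+\alpha)$, which holds on $(q_{\textrm{rad}},2^*)$ because $q_{\textrm{rad}}>2Np/(N+\alpha)$ when $1/p>(N-2)/(N+\alpha)$ and $\alpha<N$.

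\textbf{Corrections.} Three side remarks need correcting; none is fatal.
\begin{itemize}
\item Your monotonicity claim is backwards. $\theta_2$ is nonincreasing in $\ell$ (constant if $N=2$, vanishing at $2^*$ if $N\ge 3$), so $\theta_2(\ell)\ge\theta_2(q)$ for $\ell<q$. The right reason for your conclusion is that $\theta_1+\theta_2=\ell/2-(p-1)\theta_2$ is strictly increasing in $\ell$ and equals $1$ at $\ell=q$.
\item The claim $r'<q$ is false in general: the admissible range allows $r'=2^*>q$. It is also unnecessary, since $\int_{\R^N}|a||u|\,dx\le|a|_r|u|_{r'}\le C\norm{u}$ while $I_s(u)\ge\frac{1}{2p}(\norm{u}^2-1)$.
\item $F$ has no separate linear part, so "control the linear part by weak convergence" does not make sense as written. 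For weak continuity of $\int_{\R^N}F(|x|,u)\,dx$ when $r'=2^*$, bound $|F(|x|,u_n)-F(|x|,u)|$ by $|u_n-u|\,\bigl(a_1(|u_n|+|u|)^{q_1-1}+a_2(|u_n|+|u|)^{q_2-1}+a\bigr)$. Then use that $|u_n-u|\weak 0$ in $L^{2^*}$, since it is bounded there and tends to $0$ a.e. along a subsequence.
\end{itemize}
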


In the next theorem we establish some sufficient conditions for the existence of nontrivial solutions when
\begin{equation} \label{104}
f(|x|,t) = \lambda\, |t|^{q-2}\, t + g(|x|,t),
\end{equation}
where $g$ is a Carath\'{e}odory function on $[0,\infty) \times \R$ satisfying
\begin{equation} \label{105}
|g(|x|,t)| \le a_3\, |t|^{q_3 - 1} + a_4\, |t|^{q_4 - 1} \quad \text{for a.a.\! } x \in \R^N \text{ and all } t \in \R
\end{equation}
for some $a_3, a_4 > 0$ and $q < q_3 < q_4 < 2^\ast$.

\begin{theorem} \label{Theorem 3}
Let $1/p > (N-2)/(N+\alpha)$. Assume that $f$ satisfies \eqref{102} with $q_{\textrm{rad}} < q_1 < q_2 < q$, \eqref{104} and \eqref{105} hold, and that $\lambda$ is not an eigenvalue of problem \eqref{103}.
\begin{enumroman}
\item If $\lambda > \lambda_1$, then equation \eqref{101} has a nontrivial solution.
\item If $\lambda > \lambda_2$, then equation \eqref{101} has two nontrivial solutions.
\end{enumroman}
\end{theorem}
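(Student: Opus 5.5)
The plan is to combine the coercivity that the subscaled growth \eqref{102} (with $q_1,q_2<q$) gives at infinity with a local analysis of $\Phi$ near the origin, where $f$ looks like $\lambda|t|^{q-2}t$ plus a superscaled perturbation. The local analysis will be phrased in terms of the scaled eigenvalue problem \eqref{103} and the cohomological-index eigenvalues $\lambda_k$ of Theorem~\ref{Theorem 301}.

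\textbf{Step 1: global structure of $\Phi$.} First I show that $\Phi$ is coercive and bounded below on $E$, and that it satisfies (PS).
\begin{itemize}
\item Coercivity follows by interpolating $L^{q_i}$ between $L^{q_{\text{rad}}}$-type and $L^q$-type bounds along the scaling $u_t$. The point is that $\int |u_t|^{q_i}$ grows slower than $\|u_t\|^2$-type quantities because $q_i<q$. Concretely, I would use the Coulomb--Sobolev inequality, which controls $\int|u|^r$ by $(\int|\nabla u|^2)^{a}(\int|I_{\alpha/2}\star|u|^p|^2)^{b}$. For $q_{\text{rad}}<r<q$ the total homogeneity is below the one yielding coercivity (this is the coercive regime of \cite[Remark 7.1]{MR3568051}).
\item The $a(x)$ term is handled by H\"older's inequality.
\item Given coercivity, PS sequences are bounded. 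The compact embedding from Theorem~\ref{Theorem 1}\ref{i1} for $q_{\text{rad}}<q_1<q_2<q<2^*$, together with the $(S_+)$-type property of $-\Delta$ plus the convex nonlocal term, gives strong convergence.
\end{itemize}
This step already yields Theorem~\ref{Theorem 2} by minimization. In Theorem~\ref{Theorem 3} it also shows that $\Phi$ has a global minimizer $u_0$ and that $\Phi$ is bounded below.

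\textbf{Step 2: local behaviour at $0$ for Theorem~\ref{Theorem 3}.} Here $f(|x|,0)=0$, so $0$ is a critical point. Along the scaling, $\Phi(u_t)=t^{\kappa}\big[I(u)-\tfrac{\lambda}{q}\int|u|^q\big]+o(t^\kappa)$ as $t\to0$ for a suitable $\kappa>0$, where $I(u)=\tfrac12\int|\nabla u|^2+\tfrac1{2p}\int|I_{\alpha/2}\star|u|^p|^2$. This holds because $\int G(u_t)=O(t^{\sigma q_3-N}\cdots)$ is of higher order, since $q_3>q$.

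So near $0$, $\Phi$ is modelled on the scaled functional $\Phi_\lambda(u)=I(u)-\frac{\lambda}{q}\int|u|^q$, restricted to the scaled "sphere" $\M=\set{u: I(u)=1}$ or an equivalent manifold transversal to the orbits $t\mapsto u_t$. Since $\lambda$ is not an eigenvalue, $0$ is an isolated critical point, and I would use the critical group estimate from Section~3 for scaled functionals. If $\lambda_k<\lambda<\lambda_{k+1}$, then $C_k(\Phi,0)\ne0$. The underlying reason is that the sublevel $\set{\Phi_\lambda<0}\cap\M$ has cohomological index $\ge k$, while $\set{\Phi_\lambda\ge0}$ has index $\ge$ codimension $k$, which gives a scaled local linking at $0$.

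\textbf{Step 3: conclusion.}
\begin{itemize}
\item \textbf{Part (i).} Since $\lambda>\lambda_1$, there is $u$ with $\Phi_\lambda(u)<0$. Then $\Phi(u_t)<0$ for small $t$, so $\inf\Phi<0=\Phi(0)$ and the minimizer $u_0$ is nontrivial.
\item \textbf{Part (ii).} Take $\lambda_k<\lambda<\lambda_{k+1}$ with $k\ge2$. Then $C_k(\Phi,0)\ne0$ with $k\ge 2$, while $C_*(\Phi,u_0)=\delta_{*0}\Z_2$ if $u_0$ is isolated. If $u_0$ and $0$ were the only critical points, the Morse relations for a functional bounded below satisfying (PS) would force $C_*(\Phi,0)$ to be concentrated appropriately, namely $\sum_q(-1)^q\operatorname{rank}C_q=1$ and the Morse inequality in degree $k$. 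That yields a contradiction, since a nonzero group in degree $k\ge2$ can be cancelled only by another critical point with nontrivial $C_{k\pm1}$. Hence a third critical point exists, and it is a second nontrivial solution.
\end{itemize}

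\textbf{Main obstacle.} I expect the hardest part to be the rigorous critical group computation at $0$ in Step 2. The functional is not $C^2$ and the homogeneity is only along the nonlinear scaling. The perturbation $g$ must be shown negligible uniformly on the scaled sphere, so that a homotopy $\Phi_\lambda+s\,G$ preserves isolatedness of $0$ and the critical groups. I would first try to prove that, in scaled polar coordinates $(t,v)\in(0,\infty)\times\M$, one has $\langle\Phi'(u),\partial_t u\rangle\ne0$ near $0$ away from eigenfunctions, because $\lambda$ is not an eigenvalue. The nonzero critical group would then follow from the index-based linking estimates.
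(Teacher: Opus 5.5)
Your outline is correct, and it is essentially the proof of the abstract Theorem~\ref{Theorem 303} written out in the concrete setting. The paper itself only checks \eqref{3-3} and \eqref{3-5} via Lemma~\ref{Lemma1} and then cites Theorem~\ref{Theorem 303}. The mechanism behind that theorem is visible in the paper's proof of Theorem~\ref{Theorem 4}: coercivity and (PS), a scaled local linking at $0$ in dimension $k$, Theorem~\ref{Theorem 305}, comparison with $C^*(\Phi,u_0)=\delta_{*0}\Z_2$, and Proposition~\ref{Proposition 4-1}.

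Your part (i) is genuinely simpler than the critical-group route. For a first eigenfunction $e_1\in\M_s$ one has $\Phi((e_1)_t)=t^s(1-\lambda/\lambda_1)+o(t^s)<0$ for small $t>0$, so the global minimizer is nontrivial. This uses neither $\lambda\notin\sigma(A_s,B_s)$ nor a sign condition like \eqref{106}.

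Your ``main obstacle'' is not really one. Theorem~\ref{Theorem 305} needs only the scaled local linking of Definition~\ref{local linking}. This follows directly from $\Phi(u_t)=t^s(1-\lambda/\widetilde{\Psi}(u))+o(t^s)$, which holds uniformly on the bounded set $\M_s$ by \eqref{105} and the embeddings. The sets are $A_0=\widetilde{\Psi}^{\lambda_k}$ and $B_0=\widetilde{\Psi}_{\lambda_{k+1}}$. The correct index bookkeeping is $i(\widetilde{\Psi}^{\lambda_k})=i(\M_s\setminus\widetilde{\Psi}_{\lambda_{k+1}})=k$, from Theorem~\ref{Theorem 301}\ref{301-3}, not ``index $\ge$ codimension''. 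No homotopy, no $C^2$ structure, and no proof that $0$ is isolated are needed; if $0$ is not isolated, there are infinitely many solutions.

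The step that does not work as justified is the conclusion of part (ii). The numerical Morse relations you quote do not give a contradiction. With $u_0$ contributing $\delta_{q0}$, the Euler identity and all Morse inequalities are satisfied if, say, $C^k(\Phi,0)\cong C^{k+1}(\Phi,0)\cong\Z_2$. In any case, they presuppose finite ranks that you do not know. The principle you state at the end is true: the degree-$k$ group at $0$ can only be cancelled by another critical point. But it comes from the exact sequences of sublevel sets, not from the polynomial relations.

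Concretely, suppose $0$ and $u_0$ were the only critical points. Take $\Phi(u_0)<b<0<d$, which is possible since $\Phi(u_0)<0$ by part (i). By the deformation lemmas under (PS), $\Phi^b$ deformation retracts onto $\set{u_0}$. Likewise $W$ deformation retracts onto $\Phi^d$, keeping $\Phi^d\supset\Phi^b$ fixed. Hence $C^*(\Phi,0)\cong H^*(\Phi^d,\Phi^b)\cong H^*(W,\Phi^b)=0$, since both $W$ and $\Phi^b$ are contractible. This contradicts $C^k(\Phi,0)\neq 0$. Alternatively, use Proposition~\ref{Proposition 4-1} exactly as in the paper's proof of Theorem~\ref{Theorem 4}. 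Incidentally, the exact-sequence version only uses $k\ge 1$, not $k\ge 2$.
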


The assumption that $\lambda$ is not an eigenvalue can be removed when the primitive $G(|x|,t) = \int_0^t g(|x|,\tau)\, d\tau$ of $g$ is negative for $t \in \R \setminus \set{0}$.

\begin{theorem} \label{Theorem 4}
Let $1/p > (N-2)/(N+\alpha)$. Assume that $f$ satisfies \eqref{102} with $q_{\textrm{rad}} < q_1 < q_2 < q$, \eqref{104} and \eqref{105} hold, and that
\begin{equation} \label{106}
G(|x|,t) < 0 \quad \text{for a.a.\! } x \in \R^N \text{ and all } t \in \R \setminus \set{0}.
\end{equation}
\begin{enumroman}
\item\label{Theorem 4-1} If $\lambda > \lambda_1$, then equation \eqref{101} has a nontrivial solution.
\item\label{Theorem 4-2} If $\lambda > \lambda_2$, then equation \eqref{101} has two nontrivial solutions.
\end{enumroman}
\end{theorem}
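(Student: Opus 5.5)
Since $q_1<q_2<q$ and the growth condition \eqref{102} holds, $\Phi$ is coercive on $E$ and satisfies (PS), exactly as in the proof of Theorem \ref{Theorem 3}. The plan is to use the scaling structure near the origin together with critical group estimates at $0$, in the same framework as Theorem \ref{Theorem 3}. The only change is that where $\lambda$ is an eigenvalue, we replace the nonresonance assumption with the sign condition \eqref{106}. For $u\in E$ and $t>0$ we have the scaling identities
\[
\int|\nabla u_t|^2dx = t^{2\sigma+2-N}\int|\nabla u|^2dx,\qquad \Big(\int|I_{\alpha/2}\star|u_t|^p|^2dx\Big)^{1/p}=t^{2\sigma+2-N}(\cdots)^{1/p},
\]
\[
\int|u_t|^qdx = t^{q\sigma-N}\int|u|^qdx,
\]
and all of them carry the common exponent $\kappa=2\sigma+2-N>0$ (this is positive because $1/p>(N-2)/(N+\alpha)$). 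Hence, writing
\[
I(u)=\tfrac12\int|\nabla u|^2+\tfrac1{2p}\int|I_{\alpha/2}\star|u|^p|^2,\qquad J(u)=\tfrac1q\int|u|^q,
\]
we get $\Phi(u_t)=t^{\kappa}\big(I(u)-\lambda J(u)\big)-\int G(|x|,u_t)\,dx$. By \eqref{105} with $q<q_3<q_4$, the last term is $o(t^\kappa)$ as $t\to0$, uniformly on bounded sets of the scaling-normalized manifold $\M=\{u: I(u)=1\}$ (or a similar normalization).

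The first step handles the case where $\lambda$ is not an eigenvalue: the conclusions follow directly from Theorem \ref{Theorem 3}. So assume $\lambda=\lambda_k$ for some $k$, and set $\lambda_k<\lambda_{k+1}$ with $\lambda_{k}=\lambda$, taking $k$ as the largest index with this property. The key step is to show that $\Phi$ still has a local linking at $0$ with respect to the scaled cones of Section 3, with the same dimensional data as in the nonresonant case with $\lambda_{k-1}<\lambda<\lambda_{k+1}$ reversed appropriately. For $\lambda>\lambda_1$ this means the following. On the scaled cone generated by $\{u\in\M: \Psi(u):=1/J(u)\le \lambda\}$, which has cohomological index $\ge k$ (using Theorem \ref{Theorem 301} and the minimax characterization of $\lambda_k$), we have
\[
\Phi(u_t)=t^\kappa(1-\lambda J(u))-\int G(|x|,u_t)\,dx .
\]
The first term is $\le0$, and \eqref{106} makes $-\int G>0$ only on the wrong side. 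So a strict inequality is needed here. We get it as follows: where $1-\lambda J(u)=0$ we use $G<0$ to get $\Phi(u_t)>0$, which is the wrong sign. So instead we place such $u$ on the "positive" side. On the set $\{\Psi\ge\lambda\}$ we have $1-\lambda J\ge0$, and $G<0$ gives $\Phi(u_t)>0$ for small $t>0$ whenever $u_t\neq0$. On $\{\Psi\ge\lambda_{k+1}\}\cup$ wait — more precisely, we take the positive side to be $\{\Psi\ge\lambda\}$ (index $\le$ codimension data from $\lambda_{k-1}$ level ... ) and the negative side to be $\{\Psi\le\lambda-\epsilon\}$ cones. These are the sets used for $\lambda_{j}<\lambda-\epsilon<\lambda_{j+1}=\lambda$, which yields $C_j(\Phi,0)\ne0$ with $j$ equal to the number of eigenvalues below $\lambda$. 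Compactness of $\M$-sublevels (from the compact embedding in Theorem \ref{Theorem 1}, since $q\in(q_{\textrm{rad}},2^*)$) gives uniformity in the $o(t^\kappa)$ estimate on the negative side. The conclusion is that $0$ is a critical point with nontrivial critical group in a dimension $j\ge1$ when $\lambda>\lambda_1$, and $j\ge2$ when $\lambda>\lambda_2$.

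Finally we compare with the global picture. Because $\Phi$ is coercive and bounded below, $C_*(\Phi,\infty)$ is concentrated in dimension $0$, i.e., $C_0(\Phi,\infty)=\Z_2$. If $0$ were the only critical point, then $C_*(\Phi,0)=C_*(\Phi,\infty)$, contradicting $C_j(\Phi,0)\ne0$ with $j\ge1$; this gives (i). For (ii), let $u_1$ be the global minimizer, which is nontrivial since $\Phi$ takes negative values near $0$. Then $C_0(\Phi,u_1)\ne0$, and the Morse inequalities together with $C_j(\Phi,0)\neq0$ for some $j\ge2$ force a further critical point $u_2$: if the critical set were only $\{0,u_1\}$, the alternating sum of the critical group ranks would fail to match $C_*(\Phi,\infty)$, since $C_j(\Phi,0)$ and $C_0(\Phi,u_1)$ cannot cancel each other in the exact sequence when $j\ge2$. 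The main obstacle is the local estimate at the resonant level $\{\Psi=\lambda\}$. This is exactly where \eqref{106} must be used with uniformity: $G<0$ pointwise only gives strict positivity of $\Phi(u_t)$ for each fixed $u$. To get uniform smallness of $t$, I would argue by contradiction with sequences on the compact set $\{\Psi=\lambda\}\cap\M$, using strong $L^{q_3}\cap L^{q_4}$ convergence to pass to the limit.
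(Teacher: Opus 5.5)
Your proposal is correct and follows essentially the paper's route. You build a scaled local linking at $0$ in dimension $k$ for $\lambda_k<\lambda\le\lambda_{k+1}$, with $A_0$ a sublevel of $\w{\Psi}$ below $\lambda$ and $B_0=\w{\Psi}_\lambda$, exactly as in Lemma~\ref{Lemma2}(i); this gives $C^k(\Phi,0)\ne 0$, which you play off against the critical groups of the global minimizer, and your direct exact-sequence argument for (ii) simply replaces the paper's appeal to Proposition~\ref{Proposition 4-1}.

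The step you single out as the main obstacle is not one. On $B_0$ you have $1-\lambda/\w{\Psi}(u)\ge 0$ and $\int_{\R^N}G(|x|,u_t)\,dx<0$ for every $t>0$, and the definition of scaled local linking asks for nothing more, so no uniformity or compactness argument is needed. In any case $\{\w{\Psi}=\lambda\}$ is not compact, and on the negative side boundedness of $\M_s$ already gives the uniform $o(t^s)$ bound.
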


For example, consider the equation
\begin{equation} \label{107}
- \Delta u + (I_\alpha \star |u|^p)\, |u|^{p-2}\, u = \frac{\lambda\, |u|^{q-2}\, u}{1 + |u|^\beta} \quad \text{in } \R^N,
\end{equation}
where $0 < \beta < \min \set{q - q_{\textrm{rad}},2^\ast - q}$. The nonlinearity $f(t) = \lambda\, |t|^{q-2}\, t/(1 + |t|^\beta)$ satisfies
\[
|f(t)| \le |\lambda|\, |t|^{q - \beta - 1} \quad \forall t \in \R
\]
and $g(t) = f(t) - \lambda\, |t|^{q-2}\, t = - \lambda\, |t|^{q + \beta - 2}\, t/(1 + |t|^\beta)$ satisfies
\[
|g(t)| \le |\lambda|\, |t|^{q + \beta - 1} \quad \forall t \in \R.
\]
So the following corollary is immediate from Theorem \ref{Theorem 4}.

\begin{corollary}
Let $1/p > (N-2)/(N+\alpha)$.
\begin{enumroman}
\item If $\lambda > \lambda_1$, then equation \eqref{107} has a nontrivial solution.
\item If $\lambda > \lambda_2$, then equation \eqref{107} has two nontrivial solutions.
\end{enumroman}
\end{corollary}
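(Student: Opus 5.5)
The corollary follows from Theorem \ref{Theorem 4}, so the plan is to check each hypothesis of that theorem for equation \eqref{107}. That is, with $f(t) = \lambda\, |t|^{q-2}\, t/(1 + |t|^\beta)$, we must verify
\begin{itemize}
\item the growth condition \eqref{102} with $q_{\textrm{rad}} < q_1 < q_2 < q$,
\item the splitting \eqref{104} together with the bound \eqref{105} for $g$ with $q < q_3 < q_4 < 2^\ast$,
\item the sign condition \eqref{106}.
\end{itemize}
Once these hold, parts (i) and (ii) are exactly \ref{Theorem 4-1} and \ref{Theorem 4-2} of Theorem \ref{Theorem 4}.

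\textbf{Growth condition \eqref{102}.} The bound $|f(t)| \le |\lambda|\, |t|^{q-\beta-1}$ holds because $1 + |t|^\beta \ge |t|^\beta$. It gives only a single power, so I interpolate to get two exponents. Pick $q_1 = q - \beta$ and any $q_2 \in (q-\beta, q)$. Since also $1+|t|^\beta \ge 1$, we have $|f(t)| \le |\lambda|\,|t|^{q-1}$. On $|t|\le 1$ this gives $|f(t)|\le|\lambda||t|^{q-\beta-1}$, and on $|t|\ge1$ it gives $|f(t)|\le |\lambda||t|^{q-\beta-1}\le|\lambda||t|^{q_2-1}$. Hence
\[
|f(t)|\le |\lambda|\,|t|^{q_1-1}+|\lambda|\,|t|^{q_2-1},
\]
with $a\equiv 0$. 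For $\lambda=0$ any positive constants $a_1,a_2$ work. The restriction $\beta < q - q_{\textrm{rad}}$ guarantees $q_{\textrm{rad}} < q_1 < q_2 < q$. Because $q\in(q_{\textrm{rad}},2^\ast)$ in this regime, these exponents satisfy the required range $\tfrac12-\tfrac1N\le \tfrac1{q_2} < \tfrac1{q_1} < \tfrac1{q_{\textrm{rad}}}$.

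\textbf{Splitting \eqref{104} and bound \eqref{105}.} Write $g(t) = -\lambda |t|^{q+\beta-2}t/(1+|t|^\beta)$. The bound $|g(t)|\le|\lambda||t|^{q+\beta-1}$ again gives a single power, so use the same two-exponent trick. Because $|g(t)|\le|\lambda||t|^{q-1}$, we have $|g(t)|\le|\lambda|\min(|t|^{q-1},|t|^{q+\beta-1})$. Choose $q_3\in(q,q+\beta)$ and $q_4=q+\beta$. Then
\[
|g(t)|\le|\lambda||t|^{q_3-1}+|\lambda||t|^{q_4-1}:
\]
on $|t|\ge1$ the $q_4$ term dominates, and on $|t|\le1$ we use $|t|^{q+\beta-1}\le|t|^{q_3-1}$. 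The restriction $\beta<2^\ast-q$ gives $q<q_3<q_4<2^\ast$. For $\lambda=0$, positive constants $a_3,a_4$ again work.

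\textbf{Sign condition \eqref{106}.} The case $\lambda\le0$ is irrelevant, since the corollary only concerns $\lambda > \lambda_1 > 0$, and $\lambda_1>0$ by Theorem \ref{Theorem 301}. For $\lambda>0$, $g(\tau)$ has sign opposite to $\tau$ and vanishes only at $\tau=0$. Therefore
\[
G(t)=\int_0^t g(\tau)\,d\tau<0 \quad \text{for } t\ne0.
\]

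There is no real obstacle here. The only care needed is the interpolation step, since the displayed single-power bounds before the corollary do not literally match the two-exponent forms of \eqref{102} and \eqref{105}.
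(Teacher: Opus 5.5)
Your proposal is correct and follows the paper's route: the paper likewise deduces the corollary immediately from Theorem \ref{Theorem 4}, using the bounds $|f(t)|\le|\lambda|\,|t|^{q-\beta-1}$ and $|g(t)|\le|\lambda|\,|t|^{q+\beta-1}$, with the sign condition \eqref{106} for $\lambda>\lambda_1>0$ left implicit where you check it explicitly. Your conversion of these single-power bounds into the two-exponent forms \eqref{102} and \eqref{105} is valid, and it is even simpler than your case analysis suggests: each single-power bound already holds for all $t$, so adding any nonnegative extra term such as $a_2|t|^{q_2-1}$ or $a_3|t|^{q_3-1}$ preserves the inequality.
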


\subsubsection{Asymptotically scaled case}

In the asymptotically scaled case we consider the equation
\begin{equation} \label{108}
- \Delta u + (I_\alpha \star |u|^p)\, |u|^{p-2}\, u = \lambda\, |u|^{q-2}\, u + g(|x|,u) \quad \text{in } \R^N,
\end{equation}
where $g$ is a Carath\'{e}odory function on $[0,\infty) \times \R$ satisfying
\begin{equation} \label{109}
|g(|x|,t)| \le a_3\, |t|^{q_3 - 1} + h(x) \quad \text{for a.a.\! } x \in \R^N \text{ and all } t \in \R
\end{equation}
for some $a_3 > 0$, $q_{\textrm{rad}} < q_3 < q$, and $h \in L^\infty(\R^N) \cap L^r(\R^N)$ with $r\in(1,\p/(\p-1))$ if $N=2$ and $r \in [2^\ast/(2^\ast - 1),q_{\textrm{rad}}/(q_{\textrm{rad}} - 1))$ if $N\geq3$. We have the following existence result.

\begin{theorem} \label{Theorem 5}
Let $1/p > (N-2)/(N+\alpha)$. Assume that \eqref{109} holds and that $\lambda$ is not an eigenvalue of problem \eqref{103}. Then equation \eqref{108} has a solution.
\end{theorem}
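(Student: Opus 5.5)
We prove Theorem \ref{Theorem 5} by finding a critical point of
\[
\Phi(u) = I_\lambda(u) - \int_{\R^N} G(|x|,u)\, dx, \qquad I_\lambda(u) = \frac{1}{2}\int_{\R^N} |\nabla u|^2 dx + \frac{1}{2p}\int_{\R^N} \abs{I_{\alpha/2}\star|u|^p}^2 dx - \frac{\lambda}{q}\int_{\R^N} |u|^q dx,
\]
through a scaling-based linking argument modelled on the one in \cite{MePe2}. The assumption that $\lambda$ is not an eigenvalue of \eqref{103} plays the role of nonresonance in a Fredholm-alternative setting. It will give us both compactness and a linking geometry at infinity built from the scaling $u_t$.

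\textbf{Step 1: compactness.} We first show that $\Phi$ satisfies (PS). Let $(u_j)$ be a (PS) sequence and suppose, for contradiction, that $\rho_j := \norm{u_j} \to \infty$. We rescale along the scaling by choosing $t_j \to 0$ with $\norm{(u_j)_{t_j}}$ normalised; such $t_j$ exists because $t \mapsto \norm{u_t}$ is continuous and increasing from $0$ to $\infty$. Set $v_j = (u_j)_{t_j}$. The scaling identity for $\L_\lambda$ shows that $I_\lambda'$ is equivariant under the scaling. By \eqref{109} with $q_3 < q$, the perturbation $g$ has lower scaling order: after rescaling, its contribution carries a factor $t_j^{(q-q_3)\sigma}$ times bounded quantities. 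The $h$ term is handled by $h \in L^\infty \cap L^r$ together with the exponent constraints.

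Consequently $I_\lambda'(v_j) \to 0$, in a suitably scaled dual sense. By the compact embedding of Theorem \ref{Theorem 1}, $v_j \weak v$ with $v_j \to v$ in $L^q$ and in $L^{q_3}$. Since $I_\lambda'$ has the $(S_+)$-type property, up to the nonlocal term, which is handled as in \cite{MR3568051}, we get $v_j \to v$ with $\norm{v} = 1$ and $\L(v) = \lambda |v|^{q-2}v$. This contradicts the assumption that $\lambda$ is not an eigenvalue. Hence (PS) sequences are bounded, and strong convergence follows from the same compactness argument.

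\textbf{Step 2: geometry.} If $\lambda < \lambda_1$, then $\Phi$ is coercive along the scaled sphere. Indeed, $I_\lambda(u_t) = t^{\gamma} I_\lambda(u)$ for the appropriate exponent $\gamma > 0$, and $I_\lambda > 0$ on the scaled sphere $\M = \{\text{normalised } u\}$, with a quantitative lower bound coming from the variational characterisation of $\lambda_1$. The $g$ term grows at a lower rate. So $\Phi$ is bounded below and coercive, and a global minimiser exists.

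If $\lambda_k < \lambda < \lambda_{k+1}$, we use the cohomological index definition of the eigenvalues (Theorem \ref{Theorem 301}). Their minimax characterisation yields a symmetric compact set $A_0 \subset \{u \in \M : I_\lambda(u) \le -\delta\}$ of index $k$, and the set $B_0 = \{u \in \M : I_\lambda(u) \ge \delta\}$ of index $\ge k$ on the complement. We then define the scaling cones
\[
A = \{u_R : u \in A_0\}, \qquad B = \{u_t : u \in B_0,\ t \ge 0\}.
\]
These link in the sense of the nonlinear linking theorem of Section 3, via the cohomological index. We have $\Phi \to -\infty$ on $A$ as $R \to \infty$, while $\Phi$ is bounded below on $B$, since $I_\lambda(u_t) \ge \delta t^\gamma$ dominates the subscaled perturbation. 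Choosing $R$ large gives $\sup_A \Phi < \inf_B \Phi$, and the linking theorem together with (PS) produces a critical point.

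\textbf{Main obstacle.} The hardest step is Step 1, in two respects. First, we must verify that the rescaled $g$-term really vanishes in the dual norm; this requires the precise exponents in \eqref{109}, including the $L^\infty \cap L^r$ term $h$. Second, we must establish strong convergence of the normalised sequence in the Coulomb–Sobolev norm, despite the nonlocal term and the non-homogeneity. To do this we would mimic the compactness argument used for the eigenvalue problem itself, deducing convergence of $\int |\nabla v_j|^2$ and of the Hartree term from the equation tested with $v_j - v$, and then using uniform convexity of $E$.
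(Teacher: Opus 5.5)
Your route is the paper's. The paper's proof checks \eqref{3-7} via Lemma~\ref{Lemma1}, item~\ref{Lemma1-3}, with $(H_1)$--$(H_{12})$ verified at the start of Section~\ref{Section 4}, and then invokes the abstract Theorem~\ref{Theorem 304}. Your Step~1 is the concrete form of Proposition~\ref{Proposition 304-1}: normalize along the scaling, get $A_s(v_j)-\lambda B_s(v_j)\to0$, and use $(H_7)$ and $(H_9)$ to produce an eigenfunction for $\lambda$. Your Step~2 is the minimization/saddle-point dichotomy behind Theorem~\ref{Theorem 304}. Your ``suitably scaled dual sense'' is in fact convergence in $W^*$: for $T\ge1$ one has $\norm{z_T}\le T^{s/2}\norm{z}$, so $T_j^{-s}\abs{\Phi'(u_j)z_{T_j}}\le T_j^{-s/2}\norm{\Phi'(u_j)}\norm{z}$.

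One step, however, is justified by the wrong result. Your sets $A=\set{u_R:u\in A_0}$ and $B=\set{u_t:u\in B_0,\ t\ge0}$ are not those of Theorem~\ref{scaled linking theorem}. There $B=\set{u_\rho:u\in B_0}$, and $A$ contains an $e$-cap at scale $R>\rho$. That theorem suits functionals tending to $-\infty$ along every ray, as in Theorems~\ref{Theorem 6} and~\ref{Theorem 8}. Here, by contrast, $\Phi(w_t)\to+\infty$ whenever $\w{\Psi}(w)>\lambda$, and the cap necessarily contains such directions.

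What you need is the saddle-point version, with $X=\set{u_t:u\in A_0,\ 0\le t\le R}$ and $i(A_0)=i(\M_s\setminus B_0)=k$. The condition that matters is the upper bound $i(\M_s\setminus B_0)\le i(A_0)$, not ``index $\ge k$''. Take $A_0=\w{\Psi}^{\lambda/(1+\delta)}$ and $B_0=\w{\Psi}_{\lambda/(1-\delta)}$ with $\delta>0$ small. Both indices then equal $k$ by Theorem~\ref{Theorem 301}, item~\ref{301-3}, and no compactness of $A_0$ is needed.

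The linking follows from the index. Suppose $\gamma\in C(X,W)$, $\gamma|_A=\mathrm{id}$, and $\gamma(X)\cap B=\emptyset$. Then $0\notin\gamma(X)$, since $0\in B$. Let $\pi(w)=w_{I_s(w)^{-1/s}}$. Then $h(u,\tau)=\pi(\gamma(u_{(1-\tau)R}))$ is a homotopy in $\M_s\setminus B_0$ from the inclusion of $A_0$ to a constant. Define $\psi(u,\tau)=h(u,\tau)$ for $\tau\ge0$ and $\psi(u,\tau)=-h(-u,-\tau)$ for $\tau\le0$. This is an odd continuous map $\Sigma A_0\to\M_s\setminus B_0$, so $k+1=i(\Sigma A_0)\le k$ by $(i_2)$ and $(i_6)$, a contradiction.

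You also need $\sup_X\Phi<\infty$, which holds because $X$ is bounded. Combined with your estimate $\sup_A\Phi<\inf_B\Phi$ for large $R$ and with (PS), the usual deformation argument gives the critical point.
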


In particular, we have the following

\begin{corollary}
Let $1/p > (N-2)/(N+\alpha)$. Then either the equation
\[
- \Delta u + (I_\alpha \star |u|^p)\, |u|^{p-2}\, u = \lambda\, |u|^{q-2}\, u \quad \text{in } \R^N
\]
has a nontrivial solution, or the equation
\[
- \Delta u + (I_\alpha \star |u|^p)\, |u|^{p-2}\, u = \lambda\, |u|^{q-2}\, u + h(x) \quad \text{in } \R^N
\]
has a solution for all $h \in L^\infty(\R^N) \cap L^r(\R^N)$ with $r\in(1,\p/(\p-1))$ if $N=2$ and $r \in [2^\ast/(2^\ast - 1),q_{\textrm{rad}}/(q_{\textrm{rad}} - 1))$ if $N\geq3$.
\end{corollary}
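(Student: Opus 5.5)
The corollary follows from Theorem \ref{Theorem 5} by a dichotomy on whether $\lambda$ is an eigenvalue of \eqref{103}. I would take $g(|x|,t) := h(x)$ in \eqref{108}, but $h$ must then be a function of $|x|$ alone, because $g$ is a Carathéodory function on $[0,\infty)\times\R$. So I read $h$ as radial, consistent with working in $E = E^{\alpha,\,p}_{\text{rad}}(\R^N)$. If the intended statement allows nonradial $h$, one would need a version of Theorem \ref{Theorem 5} in the full space $E^{\alpha,\,p}(\R^N)$; I do not expect that, and I will flag the radial reading explicitly.

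First, recall from the definition that $\lambda$ is an eigenvalue of \eqref{103} exactly when \eqref{103} has a nontrivial solution $u \in E$. Suppose the first alternative fails, so the homogeneous equation has no nontrivial solution. Then $\lambda$ is not an eigenvalue.

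Next, fix $h \in L^\infty(\R^N)\cap L^r(\R^N)$ with $r$ in the stated range and set $g(|x|,t) = h(x)$. This $g$ is Carathéodory, being measurable in $x$ and constant, hence continuous, in $t$. It satisfies \eqref{109} with any $a_3>0$ and any fixed $q_3\in(q_{\textrm{rad}},q)$. This interval is nonempty because $q\in(q_{\textrm{rad}},2^\ast)$ when $1/p>(N-2)/(N+\alpha)$, as noted after the scaling discussion. The integrability range required of $h$ in the corollary is exactly the one in \eqref{109}.

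Finally, Theorem \ref{Theorem 5} applies and yields a solution of \eqref{108}, which here is precisely the inhomogeneous equation. Hence at least one of the two alternatives holds. The only real obstacle is the point already raised: making sure $h$ is compatible with the form $g(|x|,t)$ and with the radial functional setting. The remaining hypothesis checks are immediate.
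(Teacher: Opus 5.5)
Your argument is correct and matches how the paper obtains the corollary (the paper simply says ``in particular'' after Theorem~\ref{Theorem 5}): if $\lambda\notin\sigma(A_s,B_s)$, apply Theorem~\ref{Theorem 5} with $g(|x|,t)=h(x)$, which satisfies \eqref{109} with $|h|$ as the bounding function and any $q_3\in(q_{\textrm{rad}},q)$. Your radial caveat is also well taken and is genuinely necessary, not just a reading choice: for radial $u$ the expression $-\Delta u+(I_\alpha\star|u|^p)|u|^{p-2}u-\lambda|u|^{q-2}u$ is radial, so the statement implicitly requires $h=h(|x|)$.
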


\subsubsection{Superscaled case}

In the superscaled case we also assume that $1/p > (N-2)/N$, so that $2p < 2^\ast$. Noting that $q < 2p$, we consider the equation
\begin{equation} \label{110}
- \Delta u + (I_\alpha \star |u|^p)\, |u|^{p-2}\, u = \lambda\, |u|^{q-2}\, u + g(|x|,u) + |u|^{r-2}\, u \quad \text{in } \R^N,
\end{equation}
where $2p < r < 2^\ast$ and $g$ is a Carath\'{e}odory function on $[0,\infty) \times \R$ satisfying
\begin{equation} \label{111}
|g(|x|,t)| \le a_3\, |t|^{q_3 - 1} + a_4\, |t|^{q_4 - 1} \quad \text{for a.a.\! } x \in \R^N \text{ and all } t \in \R
\end{equation}
for some $a_3, a_4 > 0$ and $q < q_3 < q_4 < r$. Condition \eqref{111} implies that $g(|x|,0) \equiv 0$ and hence equation \eqref{110} has the trivial solution $u = 0$. We will show that this equation has a nontrivial solution when the primitive $G(|x|,t) = \int_0^t g(|x|,\tau)\, d\tau$ of $g$ is nonnegative.

\begin{theorem} \label{Theorem 6}
Let $1/p > (N-2)/N$. Assume that \eqref{111} holds, $\lambda$ is not an eigenvalue of problem \eqref{103}, and that
\begin{equation} \label{112}
G(|x|,t) \ge 0 \quad \text{for a.a.\! } x \in \R^N \text{ and all } t \in \R.
\end{equation}
Then equation \eqref{110} has a nontrivial solution.
\end{theorem}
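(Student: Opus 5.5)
We look for a nontrivial critical point of
\[
\Phi(u) = \frac12\int|\nabla u|^2 + \frac1{2p}\int|I_{\alpha/2}\star|u|^p|^2 - \frac{\lambda}{q}\int|u|^q - \int G(|x|,u) - \frac1r\int|u|^r
\]
on $E = E^{\alpha,\,p}_{\text{rad}}(\R^N)$. The plan is a scaling-based linking argument, with the cone of a sublevel set of the scaled eigenvalue problem playing the role of a linear subspace. Write
\[
I(u) = \frac12\int|\nabla u|^2 + \frac1{2p}\int|I_{\alpha/2}\star|u|^p|^2, \qquad J(u)=\frac1q\int|u|^q .
\]
Under $u\mapsto u_t$ both $I$ and $J$ scale homogeneously with the same power of $t$. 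The eigenvalues $\lambda_k$ from Theorem \ref{Theorem 301} are minimax values of $\Psi = I/J$ on the scaling-invariant manifold, defined via the cohomological index. Since $\lambda$ is not an eigenvalue, there is $k$ with $\lambda_k<\lambda<\lambda_{k+1}$, or $\lambda<\lambda_1$ (the case $k=0$).

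\textbf{Step 1: compactness.} First show that $\Phi$ satisfies the (PS) condition. Because $2p<r<2^\ast$, the usual combination $\Phi(u_n) - \frac1r\Phi'(u_n)u_n$ controls the positive terms. The $I$-part gives $(\frac12-\frac1r)\int|\nabla u_n|^2 + (\frac1{2p}-\frac1r)\int|I_{\alpha/2}\star|u_n|^p|^2$. The remaining terms are bounded as follows.
\begin{itemize}
\item The $\lambda$-term and the $g$-terms, which have growth $q\le q_3<q_4<r$, are absorbed via interpolation against the $\int|u|^r$ term; here one uses the positive coefficient $(\frac1r-\frac1s)$ in front of $\int|u|^r$ for lower powers $s<r$, together with Young's inequality.
\item Boundedness in the Coulomb norm then follows, since the nonlocal term is raised to the power $1/p$ in the norm.
\end{itemize}
Compactness of a bounded sequence comes from Theorem \ref{Theorem 1}\ref{i1}, since all exponents $q,q_3,q_4,r$ lie strictly between $\p$ and $2^\ast$. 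Combined with the standard monotonicity (uniform convexity) of $I'$, this gives strong convergence.

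\textbf{Step 2: linking near $0$ and at infinity.} Near zero the dominant terms are of scaled order $q$: along scaled curves $u_t$ with $t\to0$, the $g$ and $r$ terms are of higher order because $q_3,q_4,r>q$ and $(s-q)\sigma>0$.
\begin{itemize}
\item On the set $\{\Psi\ge\lambda_{k+1}\}$ (pseudo-index $\ge k$), $\Phi$ has a positive lower bound on a small "scaled sphere". This is where I would use $G\ge0$ only in reverse; the lower bound relies instead on the higher order of $G$ and of the $r$-term.
\item On the cone $\{t\,\cdot\,: \Psi\le\lambda_k\}$ and at large scale, $\Phi\to-\infty$ because $-\frac1r\int|u|^r$ dominates for large $t$ (superscaled).
\item On small scales of that cone, $\Phi\le0$ using $G\ge0$ and $\lambda>\lambda_k$, since $I-\lambda J\le(\lambda_k-\lambda)J<0$.
\end{itemize}
One then constructs the linking sets via the cohomological index, as in the abstract linking theorem of Section 3 (scaling version): $A_0=$ the large-scale part of the cone over a compact symmetric subset $A$ of $\{\Psi\le\lambda_k\}$ with $i(A)\ge k$, $A$ the whole cone, and $B=\{\Psi\ge\lambda_{k+1}\}$ at a small scale. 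Index theory gives $i(A)=k$ and $i(\text{complement of }B)= k$, so $A_0$ and $B$ link. The linking theorem, together with (PS), then yields a critical point at level $c\ge\inf_B\Phi>0$, which is therefore nontrivial.

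\textbf{Step 3: the case $k=0$.} For $\lambda<\lambda_1$ the above reduces to a mountain pass: $0$ is a strict local minimum by the scaled estimate, and $\Phi(u_t)\to-\infty$ along any ray.

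\textbf{Main obstacle.} The hardest part is the local estimate near $0$. Smallness must be measured along scaling rather than in norm, since $\Phi$ is not homogeneous. I would parametrize $u=v_t$ with $v$ on the normalized manifold $\{I(v)=1\}$, so that $I(v_t)$ is an explicit power of $t$. Then I would bound $\int|v_t|^{s}$ for $s\in\{q_3,q_4,r\}$ as $t^{(s-q)\sigma}$ times the scaling factor of $I$ multiplied by a uniform constant, using the embedding of Theorem \ref{Theorem 1} on the bounded set $\{I=1\}$. The delicate point is verifying uniformity: that $\{I(v)=1\}$ is bounded in $E$ and that the embedding constants hold on it.
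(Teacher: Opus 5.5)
Your overall architecture matches the paper's: a mountain pass for $\lambda<\lambda_1$, and for $\lambda_k<\lambda<\lambda_{k+1}$ a scaled linking with a compact symmetric base of index $k$ on which $\w{\Psi}<\lambda$, with $B_0=\w{\Psi}_{\lambda_{k+1}}$, $G\ge0$ giving $\Phi\le0$ on the whole cone, and positivity on $B$ at small scale. The estimate you call the main obstacle is routine, because $\M_s$ is bounded in $E$. The genuine gap is Step 1. In $\Phi(u_n)-\frac1r\Phi'(u_n)u_n$ the two $\int|u_n|^r$ terms cancel exactly, so there is nothing to interpolate against. What is left on the right has two parts:
\begin{enumerate}
\item the term $\lambda(\frac1q-\frac1r)\int|u_n|^q$, which scales like $t^s$, exactly as the left side, and cannot be absorbed by it when $\lambda/\lambda_1$ is large (test on a first eigenfunction, where $\frac1q\int|u|^q=I_s(u)/\lambda_1$);
\item the term $\int(G-\frac1r g u_n)$, of scaling order up to $t^{\sigma q_4-N}$ with $\sigma q_4-N>s$.
\end{enumerate}
Switching to $\frac1\mu$ with $2p<\mu<r$ restores a positive multiple of $\int|u_n|^r$. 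But absorbing $|u|^q,|u|^{q_i}\le\varepsilon|u|^r+C_\varepsilon|u|^l$ with $\p<l<q$ leaves $\int|u_n|^l$. The embedding only bounds this by $C\|u_n\|^l$, which the left side (comparable to $\|u_n\|^2$ in the worst case) does not dominate when $\p\ge2$ forces $l>2$, e.g.\ $N=3$, $\alpha=p=2$, $\p=18/7$. Your remark about the exponent $1/p$ in the norm does not close this.

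The missing idea is to measure size along the scaling, as in Lemma~\ref{Lemma3}. Write $u_n=(\w{u}_n)_{\w{t}_n}$ with $\w{u}_n\in\M_s$ and $\w{t}_n=I_s(u_n)^{1/s}\to\infty$. Divide $\Phi(u_n)=c+o(1)$ by $\w{t}_n^s$, and use $\sigma r-N=s+\sigma(r-q)$ together with $\w{t}_n^{\sigma(q_i-q)}\int|\w{u}_n|^{q_i}\le C\big(\w{t}_n^{\sigma(r-q)}\int|\w{u}_n|^r\big)^{\theta_i}$, where $\theta_i=(q_i-q)/(r-q)<1$. This gives $\w{t}_n^{\sigma(r-q)}\int|\w{u}_n|^r=O(1)$. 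Interpolating with the uniform $L^l$ bound on $\M_s$ then yields $\int|\w{u}_n|^q\to0$ and $\w{t}_n^{\sigma(q_i-q)}\int|\w{u}_n|^{q_i}\to0$. Only at this point does $\Phi-\frac1r\Phi'u$, divided by $\w{t}_n^s$, force $I_s(\w{u}_n)\to0$, contradicting $\w{u}_n\in\M_s$. Equivalently, the scaling bound $\int|u|^l\le C(1+I_s(u)^{\gamma})$ with $\gamma=\max\{0,(\sigma l-N)/s\}<1$ would rescue your $\mu$-variant.

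There is also a second, smaller problem. Read literally, your linking sets do not link. The top $\{u_R\}$ of a cone over a base of index $k$ cannot link $B$ when also $i(\M_s\setminus B_0)=k$; in the linear model $V\oplus W$, the $R$-ball of $V$ misses the $\rho$-sphere of $W$. The cone-only configuration of Theorem~\ref{m1} would need a strictly larger base index and $\Phi$ even, which fails here since $g$ need not be odd. Theorem~\ref{scaled linking theorem} requires three more ingredients:
\begin{enumerate}
\item an extra point $e\in\M_s\setminus A_0$;
\item $X$ taken as the scaled cone over the join $\{\pi((1-\tau)u+\tau e)\}$;
\item $\Phi\le0$ also on the cap $\{(\pi((1-\tau)u+\tau e))_R\}$, which follows from compactness of the join, $G\ge0$ and $\sigma r-N>s$.
\end{enumerate}
Finally, take the compact base inside the open set $\{\w{\Psi}<\lambda\}$ (Degiovanni--Lancelotti), not inside the closed set $\{\w{\Psi}\le\lambda_k\}$.
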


The sign condition \eqref{112} can be removed by introducing a small parameter in front of the term $g(|x|,u)$.

\begin{theorem} \label{Theorem 8}
Let $1/p > (N-2)/N$. Assume that \eqref{111} holds and that $\lambda$ is not an eigenvalue of problem \eqref{103}. Then there exists a $\mu_0 > 0$ such that the equation
\[
- \Delta u + (I_\alpha \star |u|^p)\, |u|^{p-2}\, u = \lambda\, |u|^{q-2}\, u + \mu\, g(|x|,u) + |u|^{r-2}\, u \quad \text{in } \R^N
\]
has a nontrivial solution for all $|\mu| < \mu_0$.
\end{theorem}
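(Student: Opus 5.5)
We argue as for Theorem \ref{Theorem 6}, treating $\mu\, g$ as a small perturbation that removes the need for the sign condition \eqref{112}. The functional is
\[
\Phi_\mu(u) = I_\lambda(u) - \mu \int_{\R^N} G(|x|,u)\, dx - \frac{1}{r} \int_{\R^N} |u|^r\, dx,
\]
where
\[
I_\lambda(u) = \frac{1}{2} \int_{\R^N} |\nabla u|^2\, dx + \frac{1}{2p} \int_{\R^N} \abs{I_{\alpha/2} \star |u|^p}^2\, dx - \frac{\lambda}{q} \int_{\R^N} |u|^q\, dx
\]
is the scaled part. Since $\lambda$ is not an eigenvalue, we have $\lambda_k < \lambda < \lambda_{k+1}$ for some $k \ge 0$, with $\lambda_0 := -\infty$.

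\textbf{Compactness.} We first verify the (PS) condition uniformly in $|\mu| \le 1$, as in the proof of Theorem \ref{Theorem 6}.
\begin{enumerate}[label=(\alph*)]
\item Boundedness of (PS) sequences follows from $2p < r$, by combining $\Phi_\mu(u_j) - \frac1r \Phi_\mu'(u_j)u_j$ with the growth bounds $q < q_3 < q_4 < r$.
\item Strong convergence then comes from the compact radial embeddings of Theorem \ref{Theorem 1}, since $q_{\textrm{rad}} < q < q_3, q_4, r < 2^\ast$.
\end{enumerate}

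\textbf{Scaling-based linking.} Next we recall the geometry used for Theorem \ref{Theorem 6}, which is built from the scaling $u \mapsto u_t$. Let $\M = \set{u : \int_{\R^N} |u|^q\, dx = 1}$, normalized in the scaled sense. The cohomological-index sublevel set $A_0 = \set{u \in \M : \Psi(u) \le \lambda_k}$ has index $k$, and $B_0 = \set{u \in \M : \Psi(u) \ge \lambda_{k+1}}$. Because the three terms of $I_\lambda$ scale homogeneously, $I_\lambda(u_t) = t^{\kappa} I_\lambda(u)$ for a fixed exponent $\kappa > 0$. Hence $I_\lambda \le -c\, t^\kappa$ on $\set{u_t : u \in A_0}$ and $I_\lambda \ge c\, t^\kappa$ on $\set{u_t : u \in B_0}$.

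\textbf{Effect of the perturbation terms.}
\begin{enumerate}[label=(\alph*)]
\item The terms $\int |u_t|^{q_i}$ and $\int |u_t|^r$ carry factors $t^{(q_i - q)\sigma}$ and $t^{(r-q)\sigma}$ relative to the scaled terms. These powers are positive, so the perturbations are higher order as $t \to 0$.
\item Consequently, for small $\rho > 0$ we get $\Phi_\mu \ge c\rho^\kappa/2 > 0$ on $\set{u_\rho : u \in B_0}$ for all $|\mu| \le 1$.
\item On $\set{u_t : u \in A_0,\ 0 \le t \le \rho}$ we get $\Phi_\mu \le 0$, using that $-\frac1r\int|u|^r \le 0$ and $|\mu|\, |\int G| \le C|\mu|\, t^{\kappa + (q_3-q)\sigma}$, which is dominated by $-c\, t^\kappa$.
\end{enumerate}

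\textbf{Obstacle: the far end of the linking cone.} The main difficulty is the part of the linking set where the sign of $G$ was used in Theorem \ref{Theorem 6}. We need to push $A_0$ together with an extra direction $v \in B_0$ out to large $t$, where $-\frac1r \int |u_t|^r$ dominates. This requires working on a compact subset $A$ of the cone over $A_0 \cup \set{v}$, for instance the finite-dimensional sets used in the linking theorem of Section 3.

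\begin{enumerate}[label=(\alph*)]
\item Without $G \ge 0$, the quantity $\sup_A \Phi_\mu$ can only be bounded by $\sup_A \Phi_0 + |\mu| C_A$.
\item The term $-\frac1r\int|u|^r$ forces $\Phi_0 \to -\infty$ along rays, so we may fix $R$ with $\Phi_0 \le -1$ on the outer part of $A$.
\item Then we choose $\mu_0$ so that $\mu_0 C_A < 1$ and $\mu_0 C_A$ is smaller than the gap $c\rho^\kappa/2$. Here $C_A$ bounds $\int|G(|x|,u)|$ on $A$, which is finite by compactness of $A$ and \eqref{111}.
\end{enumerate}

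With these choices the linking inequality $\sup_{\bdry A} \Phi_\mu < \inf_B \Phi_\mu$ holds for all $|\mu| < \mu_0$. The linking theorem then gives a critical value $c_\mu \ge \inf_B \Phi_\mu > 0 = \Phi_\mu(0)$, so the corresponding critical point is nontrivial. The case $k = 0$ reduces to the mountain pass theorem with the same estimates.
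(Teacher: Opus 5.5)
Your architecture matches the paper's. You use Theorem~\ref{scaled linking theorem} with $B_0=\w{\Psi}_{\lambda_{k+1}}$ and an index-$k$ set $A_0$ on which $\w{\Psi}<\lambda$. You take $\rho$ small uniformly in $|\mu|\le 1$ and $R$ large for the outer piece. Then $|\mu|$ is made small so that $\Phi_\mu$ on the cone $\set{u_t : u\in A_0,\ 0\le t\le R}$ stays below $\inf_B\Phi_\mu$. The paper bounds this cone by maximizing $|\mu|c_1t^{\sigma q_3-N}-\frac{c_3}{2}t^{\sigma r-N}$ over $t\ge0$; your bound $|\mu|C_A$, with $R$ fixed first, works equally well.

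\textbf{The gap is the compactness you invoke.} Your $A_0=\w{\Psi}^{\lambda_k}$ is only a closed subset of the infinite-dimensional manifold $\M_s$ and is in general not compact. The sets $X$ and $A$ of Theorem~\ref{scaled linking theorem} are built over all of $A_0$, so they are not the compact, finite-dimensional sets you describe.

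\begin{itemize}
\item Finiteness of $C_A$ needs only boundedness, so that step survives.
\item The choice of $R$ does not survive. That $\Phi_0\to-\infty$ along each ray is a pointwise statement. To get $\Phi_0(w_R)\le R^s-\frac{R^{\sigma r-N}}{r}\int_{\R^N}|w|^r\,dx\le -1$ uniformly for $w\in\set{\pi((1-\tau)u+\tau e) : u\in A_0,\ \tau\in[0,1]}$, you need a positive lower bound for $\int_{\R^N}|w|^r\,dx$ on this family. You support this only by the false compactness claim.
\end{itemize}

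The paper closes exactly this point by taking $A_0$ to be a compact symmetric subset of index $k$ of the open set $\M_s\setminus\w{\Psi}_\lambda$ (Degiovanni--Lancelotti). This keeps $i(A_0)=k=i(\M_s\setminus B_0)$ and makes the projected segments compact, so $\int_{\R^N}|w|^r\,dx$ is bounded away from zero there. With that substitution your argument goes through. For your closed $A_0$ the lower bound could instead be proved from weak limits, the compact embedding into $L^q(\R^N)$, weak lower semicontinuity of $I_s$, and $\w{\Psi}(e)>\lambda_k$; but that argument is not in your proposal.

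\textbf{Two smaller points.}

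First, your reason for boundedness of (PS) sequences does not work as stated. The combination $\Phi_\mu(u_j)-\frac1r\Phi_\mu'(u_j)u_j$ still contains $-\lambda(\frac1q-\frac1r)\int_{\R^N}|u_j|^q\,dx$. This term scales exactly like the coercive terms and cannot be absorbed when $\lambda>0$ is large. Lemma~\ref{Lemma3} instead rescales to $\w{u}_j=(u_j)_{t_j}\in\M_s$, shows $\int_{\R^N}|\w{u}_j|^r\,dx=O(\w{t}_j^{-\sigma(r-q)})$, and interpolates to kill the $q$-term. Since $\mu g$ satisfies \eqref{111} with constants $|\mu|a_3$ and $|\mu|a_4$, you can simply cite Lemma~\ref{Lemma3} for each fixed $\mu$; uniformity in $\mu$ is not needed.

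Second, $\M$ must be $\M_s=\set{I_s=1}$, not the unit sphere of $L^q(\R^N)$. Only then does Theorem~\ref{Theorem 301} give the indices you use.
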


For example, consider the equation
\begin{equation} \label{113}
- \Delta u + (I_\alpha \star |u|^p)\, |u|^{p-2}\, u = \lambda\, |u|^{q-2}\, u + \mu\, |u|^{\beta - 1} + |u|^{r-2}\, u \quad \text{in } \R^N,
\end{equation}
where $2p < r < 2^\ast$ and $q < \beta < r$. Note that it does not follow from standard arguments involving index theories that this equation has infinitely many solutions since the right-hand side is not an odd function of $u$. The following corollary is immediate from Theorem \ref{Theorem 8}.

\begin{corollary}
Let $1/p > (N-2)/N$. Then there exists a $\mu_0 > 0$ such that the equation \eqref{113} has a nontrivial solution for all $\lambda \in \R$ and $|\mu| < \mu_0$.
\end{corollary}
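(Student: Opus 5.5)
The proof splits according to whether $\lambda$ is an eigenvalue of \eqref{103}. The case where it is not is immediate from Theorem \ref{Theorem 8}; the case where it is will be handled by approximating $\lambda$ with non-eigenvalues. The approximation step is the main obstacle, and I have not checked that its two key uniformity claims hold.

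\emph{Reduction to Theorem \ref{Theorem 8}.} Set $g(|x|,t) = |t|^{\beta-1}$ (with $\beta > 1$ this primitive-friendly choice is Carath\'eodory, independent of $x$, and vanishes at $t=0$). Fix exponents $q < q_3 < \beta < q_4 < r$. Since $|t|^{\beta-1} \le |t|^{q_3-1}$ for $|t| \le 1$ and $|t|^{\beta-1} \le |t|^{q_4-1}$ for $|t| \ge 1$, we get
\[
|g(t)| \le |t|^{q_3-1} + |t|^{q_4-1},
\]
so \eqref{111} holds with $a_3 = a_4 = 1$. The hypotheses $2p < r < 2^\ast$ and $1/p > (N-2)/N$ are exactly those of Theorem \ref{Theorem 8}. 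Hence, if $\lambda$ is not an eigenvalue of \eqref{103}, Theorem \ref{Theorem 8} gives some $\mu_0(\lambda) > 0$ such that \eqref{113} has a nontrivial solution for all $|\mu| < \mu_0(\lambda)$.

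\emph{The case where $\lambda$ is an eigenvalue.} Theorem \ref{Theorem 8} does not apply directly, and the scaling $u \mapsto u_t$ cannot move $\lambda$. Indeed, the term $\lambda|u|^{q-2}u$ scales exactly like $\L$, while the $\mu$ term and the $|u|^{r-2}u$ term only acquire factors $t^{(\beta-q)\sigma}$ and $t^{(r-q)\sigma}$. So I would argue by approximation, in four steps.

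\begin{enumerate}
\item \emph{Approximating sequence.} Choose non-eigenvalues $\lambda_n \to \lambda$. This requires the set of eigenvalues to have empty interior; I would first try to prove this from the minimax characterization in Theorem \ref{Theorem 301}, and note that the $\lambda_n$ only need to be non-eigenvalues.
\item \emph{Uniformity of the construction.} Re-examine the proof of Theorem \ref{Theorem 8} to show two things. First, $\mu_0$ can be taken uniform for $\lambda_n$ in a neighbourhood of $\lambda$. Second, the nontrivial solutions $u_n$ of the $\lambda_n$-problems lie at critical levels $c_n \in [c_-, c_+]$ with $0 \notin [c_-, c_+]$. Both should come from the linking and local-linking estimates, because these depend continuously on $\lambda$ through the cohomological-index eigenvalues $\lambda_k < \lambda_n < \lambda_{k+1}$ or their degenerate analogues.
\item \emph{Compactness.} Pass to the limit $n \to \infty$. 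The condition $2p < r < 2^\ast$ should give boundedness of $(u_n)$ in $E$, exactly as for Palais--Smale sequences in the paper. The compact embeddings of Theorem \ref{Theorem 1} then yield strong convergence $u_n \to u$ to a critical point of the $\lambda$-functional $\Phi_\lambda$.
\item \emph{Nontriviality.} Since $\Phi_{\lambda_n}(u_n) = c_n$ stays bounded away from $0$ and $\Phi_\lambda(0) = 0$, the limit satisfies $\Phi_\lambda(u) \ne 0$, so $u \ne 0$.
\end{enumerate}

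Both uniformity claims in step 2 need checking. If the uniformity of $\mu_0$ in $\lambda$ fails near eigenvalues, the fallback is to redo the local-linking argument of Theorem \ref{Theorem 8} directly at the resonant $\lambda$. The sign of $\mu|u|^{\beta-1}$ is not controlled, so the proof cannot rely on that sign. Instead it would use the superscaled term $|u|^{r-2}u$ together with the smallness of $\mu$ to fix the geometry on the relevant scaled cones.
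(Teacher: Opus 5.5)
Your reduction to Theorem~\ref{Theorem 8} is exactly the paper's argument. The paper simply declares the corollary immediate from Theorem~\ref{Theorem 8} with $g(|x|,t)=|t|^{\beta-1}$, and your check of \eqref{111} via $q<q_3<\beta<q_4<r$ is correct. That deduction, however, only covers $\lambda\notin\sigma(A_s,B_s)$ and yields $\mu_0=\mu_0(\lambda)$. So you are right that the paper's one-liner does not reach every $\lambda\in\R$. Read literally, the statement also asks for one $\mu_0$ valid for all $\lambda$. Neither Theorem~\ref{Theorem 8} nor your argument gives that: the constants depend on $1-\lambda/\lambda_{k+1}$ and on a lower bound for $\int|u|^r$ on $A_0$ that deteriorates as $\lambda\to\infty$. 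You should say explicitly that you prove the $\mu_0(\lambda)$ version.

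For the resonant case your approximation scheme has genuine gaps.
\begin{itemize}
\item \emph{Step 1.} It cannot be derived from Theorem~\ref{Theorem 301}, which says nothing about eigenvalues other than the $\lambda_k$; whether $\sigma(A_s,B_s)$ has empty interior is not known. It is also unnecessary. The proof of Theorem~\ref{Theorem 8} only uses $\lambda<\lambda_1$ or $\lambda_k<\lambda<\lambda_{k+1}$, and the index computations of Theorem~\ref{Theorem 301} and the $(PS)$ condition of Lemma~\ref{Lemma3} hold for every $\lambda$. Since $\{\lambda_k\}$ is locally finite, the only new case is $\lambda\in\{\lambda_k\}$.
\item \emph{Step 2.} It fails as stated. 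With $B_0=\widetilde\Psi_{\lambda_{k+1}}$, the only lower bound on the linking level is $\inf_B\Phi_{\lambda_n}\ge\rho^s(1-\lambda_n/\lambda_{k+1})-o(\rho^s)$, which collapses when $\lambda_n\uparrow\lambda=\lambda_{k+1}$. Then the $\mu_0$ produced by the construction tends to $0$, the levels $c_n$ may tend to $0$, and Step~4 gives nothing. With that linking, these estimates are uniform only for $\lambda_n\downarrow\lambda$.
\end{itemize}

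The simplest repair is your fallback, carried out with the $\varepsilon$-trick the paper itself uses in proving Theorem~\ref{Theorem 9}. Suppose $\lambda=\lambda_k=\cdots=\lambda_{k+m-1}<\lambda_{k+m}$. Fix $0<\varepsilon<\lambda_{k+m}-\lambda$. Let $A_0$ be a compact symmetric subset of $\M_s\setminus\widetilde\Psi_{\lambda+\varepsilon}$ with $i(A_0)=k+m-1$, and let $B_0=\widetilde\Psi_{\lambda_{k+m}}$.
\begin{itemize}
\item On $A_0$ one has $1-\lambda/\widetilde\Psi(u)<\varepsilon/(\lambda+\varepsilon)$.
\item On $A_0$ also $\int|u|^r\ge c>0$ with $c$ independent of $\varepsilon$, by interpolation with a bounded $L^l$ norm, since $\int|u|^q>q/\lambda_{k+m}$.
\item Because $s=\sigma q-N<\sigma q_3-N<\sigma q_4-N<\sigma r-N$, these give, for some $\gamma_i>0$,
\[ \sup_{u\in A_0,\ t\ge 0}\Phi(u_t)\le C\big(\varepsilon^{\gamma_0}+|\mu|^{\gamma_3}+|\mu|^{\gamma_4}\big). \]
\item On the other side, $\inf_B\Phi\ge\frac12\rho^s(1-\lambda/\lambda_{k+m})$ for a fixed small $\rho$ and all $|\mu|\le1$.
\end{itemize}
Taking $\varepsilon$ and $|\mu|$ small gives the geometry of Theorem~\ref{scaled linking theorem}. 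The cap $\{(\pi((1-\tau)u+\tau e))_R\}$ is handled by compactness for large $R$, as in the paper. Lemma~\ref{Lemma3} supplies $(PS)$, and the critical level is at least $\inf_B\Phi>0$, so the solution is nontrivial.
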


\subsubsection{Critical case}

Now we assume $N\geq3$. In this case we again assume that $p < N/(N - 2)$, so that $2p < 2^\ast$, and consider the equation
\begin{equation} \label{114}
- \Delta u + (I_\alpha \star |u|^p)\, |u|^{p-2}\, u = \lambda\, |u|^{q-2}\, u + \mu\, |u|^{\beta - 2}\, u + |u|^{2^\ast - 2}\, u \quad \text{in } \R^N,
\end{equation}
where $q < \beta < 2^\ast$. Here the main difficulty is the lack of compactness due to the presence of the critical order term.

First we take $\mu = 0$ and consider the Brezis-Nirenberg type equation
\begin{equation} \label{115}
- \Delta u + (I_\alpha \star |u|^p)\, |u|^{p-2}\, u = \lambda\, |u|^{q-2}\, u + |u|^{2^\ast - 2}\, u \quad \text{in } \R^N,
\end{equation}
where $\lambda > 0$. The next result states that this equation has $m$ distinct pairs of nontrivial solutions for all $\lambda$ in a suitably small left neighborhood of an eigenvalue of problem \eqref{103} with multiplicity $m \ge 1$.

\begin{theorem} \label{Theorem 9}
Let $p < N/(N - 2)$ and let $\lambda_k = \cdots = \lambda_{k+m-1} < \lambda_{k+m}$ for some $k, m \ge 1$. Then $\exists \delta_k > 0$ such that equation \eqref{115} has $m$ distinct pairs of nontrivial solutions at positive energy levels for all $\lambda \in (\lambda_k - \delta_k,\lambda_k)$.
\end{theorem}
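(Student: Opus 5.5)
Since equation \eqref{115} has an odd right-hand side and $\Phi$ is even, the plan is an abstract critical point theorem for even functionals in the spirit of Bartolo--Benci--Fortunato, built on the $\Z_2$-cohomological index $i$ of Fadell--Rabinowitz. It is adapted to the scaled setting, as in Mercuri--Perera: if $\Phi$ satisfies $(PS)_c$ for all $c\in(0,c^*)$, and there are symmetric sets $A_0$, $B_0$ with $i(A_0)\ge k+m-1$ and $i(S\setminus B_0)\le k-1$ on which $\sup_{A}\Phi<c^*$ and $\inf_{B}\Phi>0$ (after scaling), then $\Phi$ has $m$ distinct pairs of critical points at positive levels. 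Here
\[
\Phi(u)=\tfrac12\int|\nabla u|^2+\tfrac1{2p}\int|I_{\alpha/2}\star|u|^p|^2-\tfrac{\lambda}{q}\int|u|^q-\tfrac1{2^*}\int|u|^{2^*},
\]
and $c^*=\frac1N S^{N/2}$, with $S$ the Sobolev constant.

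\textbf{Step 1 (compactness).} Show $(PS)_c$ for $0<c<\frac1N S^{N/2}$. Since $2p<2^*$, boundedness follows from $\Phi(u_j)-\frac1{2^*}\Phi'(u_j)u_j$. In this combination the coefficients of the gradient term and of the Coulomb term are positive. The $|u|^q$ term has coefficient $\lambda(1/2^*-1/q)<0$, but it is controlled via the interpolation $\int|u|^q\le C\|\nabla u\|_2^{a}D(u)^{b}$ coming from the Coulomb--Sobolev embedding, where $q$ is the scaling-critical exponent. The interpolation exponents make this term subordinate, so it is absorbed by Young's inequality. I also have to handle that $\norm{\cdot}$ is not homogeneous.

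Then take a weak limit in $E$. Theorem \ref{Theorem 1}\ref{i1} gives compactness of the $L^q$ term, since $q\in(\p,2^*)$. Apply Brezis--Lieb to the gradient, Coulomb and critical terms; for the nonlocal term one needs weak lower semicontinuity at least. The standard concentration argument then shows that any loss of compactness costs at least $\frac1N S^{N/2}$.

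\textbf{Step 2 (geometry).} Take $\Psi$ as the eigenvalue functional, normalized on a scaling-invariant manifold $\M=\{u:\int|u|^q=1\}$ modulo the scaling $u_t$. Set $\lambda_k=\inf\{\sup_A \Psi: i(A)\ge k\}$ and use Theorem \ref{Theorem 301}. Since $\lambda_{k+m-1}=\lambda_k<\lambda_{k+m}$:

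\emph{Lower bound.} On the sublevel complement $\{\Psi\ge\lambda_{k+m}\}$, which has index $\ge$ codimension $k+m-1$ in the appropriate sense, the choice $\lambda<\lambda_{k+m}$ gives $\Phi>0$ on small scalings. Here the critical term is of higher order relative to the scaling, because $2^*>q$ and $u_t$ with $t\to0$ makes it negligible.

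\emph{Upper bound.} Take a compact symmetric $A_0\subset\{\Psi\le\lambda_k\}$ with $i(A_0)=k+m-1$; this requires a compactness argument at the eigenvalue level. Along the scaled cone $\{u_t: u\in A_0,\ t\ge0\}$, one has $\Phi(u_t)=t^{\kappa}\bigl(\Psi\text{-type terms}\bigr)-t^{\gamma}\frac1{2^*}\int|u|^{2^*}$. Then
\[
\sup_{t}\Phi(u_t)\le C(\lambda_k-\lambda)^{\theta}\to0 \quad\text{as }\lambda\uparrow\lambda_k,
\]
with $C$ depending on a lower bound of $\int|u|^{2^*}$ over the compact set $A_0$. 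So for $\lambda\in(\lambda_k-\delta_k,\lambda_k)$ this supremum lies below $\frac1N S^{N/2}$. Positivity of the levels comes from the linking with the set where $\Phi>0$.

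\textbf{Main obstacle.} The main difficulty is Step 1 together with matching the scaled linking to an abstract multiplicity theorem. The index must be computed on cones generated by the nonlinear scaling $u\mapsto u_t$ rather than by rays, and the $(PS)$ threshold must be checked against the scaling-invariant nonlocal term. I would first try to prove an abstract lemma: if $\Phi$ is even, satisfies $(PS)_c$ on $(0,c^*)$, and has $\sup_{A_0\text{-cone}}\Phi<c^*$ and $\inf_{B_0\text{-scaled sphere}}\Phi>0$ with $i(A_0)\ge k+m-1$ and $i(\M\setminus B_0)\le k-1$, then there are $m$ pairs of critical points. The proof would go by a deformation argument using the monotonicity and continuity properties of $i$ along the scaling flow.
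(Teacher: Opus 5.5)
Your overall architecture (the abstract pseudo-index theorem, which is Theorem \ref{m1}, with $c^*=\frac1N S^{N/2}$) is right. However, both steps contain genuine gaps.

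\textbf{Geometry.} You take $B_0=\{\Psi\ge\lambda_{k+m}\}$. Since $\lambda_{k+m-1}<\lambda_{k+m}$, Theorem \ref{Theorem 301} \ref{301-3} (with $k+m-1$ in place of $k$) gives $i(\M_s\setminus B_0)=k+m-1$. The abstract theorem you quote requires $i(\M_s\setminus B_0)\le k-1$.

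With $i(A_0)=i(\M_s\setminus B_0)$, the pseudo-index argument yields $c_j^*>0$ only for $j\ge k+m$. The cone over $A_0$ controls the levels only up to $j=k+m-1$, so no levels are produced. In fact your $A_0\subset\{\Psi\le\lambda_k\}$ is disjoint from $B_0$, so the cone $X$ does not even meet $B$. A symptom is that your lower bound never uses $\lambda<\lambda_k$.

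The correct choice is $B_0=\w{\Psi}_{\lambda_k}=\{\w{\Psi}\ge\lambda_k\}$. Its complement is empty if $\lambda_1=\lambda_k$, and has index $l-1\le k-1$ if $\lambda_{l-1}<\lambda_l=\lambda_k$. Positivity of $\Phi$ on $\{u_\rho:u\in B_0\}$ follows from $\Phi(u_t)\ge t^s(1-\lambda/\lambda_k+o(1))$ as $t\to0$; this is exactly where $\lambda<\lambda_k$ is needed. The index gap $(k+m-1)-(k-1)=m$ is what produces $m$ pairs.

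For $A_0$, you take a compact subset of the closed sublevel $\{\w{\Psi}\le\lambda_k\}$ but give no argument that one of index $k+m-1$ exists. Instead, take a compact symmetric subset of index $k+m-1$ of the open set $\{\w{\Psi}<\lambda_k+\varepsilon\}$, $0<\varepsilon<\lambda_{k+m}-\lambda_k$ (Degiovanni--Lancelotti). Then $\int|u|^q>q/\lambda_{k+m}$ on $A_0$. Interpolation with $L^l$, $l\in(q_{\textrm{rad}},q)$, then bounds $\int|u|^{2^*}$ from below independently of $\varepsilon$. Hence $\sup_X\Phi\le C\,(1-\lambda/(\lambda_k+\varepsilon))^{N/2}<\frac1N S^{N/2}$ for $\varepsilon$ small and $\lambda$ close to $\lambda_k$.

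\textbf{Compactness.} Your boundedness argument does not work. Because $q=q_{CS}$ is scaling-critical, the interpolation inequality is $\int|u|^q\le C\,(\int|\nabla u|^2)^{\alpha/(2+\alpha)}D(u)^{2/(2+\alpha)}$, with $D(u)=\int|I_{\alpha/2}\star|u|^p|^2$ and exponents summing to $1$. So along every scaling orbit the $|u|^q$ term has the same order $t^s$ as the gradient and Coulomb terms. Young's inequality then only gives a bound by $C(\int|\nabla u|^2+D(u))$. This can be absorbed only for small $\lambda$, not for $\lambda$ near $\lambda_k$.

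What works (Lemma \ref{ps condition critical case}) is to rescale, as follows.
\begin{enumerate}
\item Write $u_j=(\w{u}_j)_{\w{t}_j}$ with $\w{u}_j\in\M_s$ and $\w{t}_j\to\infty$.
\item Divide $\Phi'(u_j)u_j$ by $\w{t}_j^{\,s}$. The critical term carries the extra factor $\w{t}_j^{\,2s/(N-2)}$, so $\int|\w{u}_j|^{2^*}\to0$.
\item Hence $\int|\w{u}_j|^q\to0$ by interpolation with $L^l$, $l<q$; this is where the radial embedding breaks the scale invariance.
\item Finally, $2^*\Phi(u_j)-\Phi'(u_j)u_j$, divided by $\w{t}_j^{\,s}$, forces $\w{u}_j\to0$, contradicting $\w{u}_j\in\M_s$.
\end{enumerate}

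Second, the ``standard concentration argument'' needs $\Phi(u)\ge0$ at the weak limit $u$, which is a critical point by Lemma \ref{lemma0}. Here no combination $\Phi(u)-a\,\Phi'(u)u$ is sign-definite. The Coulomb coefficient is nonnegative only if $a\le1/(2p)$, the $|u|^q$ coefficient only if $a\ge1/q$, and $q<2p$. The paper obtains $\Phi(u)\ge0$ from the Pohozaev identity (Lemma \ref{pohozaev identity}, which needs a regularity argument) combined with the Nehari identity. Together they give $\Phi(u)=\frac1N\int|u|^{2^*}\ge0$. Without this you do not reach the threshold $\frac1N S^{N/2}$.
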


In particular, we have the following corollary when $m = 1$.

\begin{corollary}
Let $p < N/(N - 2)$. If $\lambda_k < \lambda_{k+1}$, then $\exists \delta_k > 0$ such that equation \eqref{115} has a pair of nontrivial solutions at a positive energy level for all $\lambda \in (\lambda_k - \delta_k,\lambda_k)$.
\end{corollary}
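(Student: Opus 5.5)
The plan is to run the Bartolo–Benci–Fortunato/Perera type abstract multiplicity theorem for even functionals, using cohomological index pseudo-index levels below the compactness threshold. The functional
\[
\Phi_\lambda(u)=\tfrac12\dev{u}+\tfrac1{2p}\col{u}-\tfrac{\lambda}{q}\sca{u}-\tfrac1{2^*}\cri{u}
\]
is even on $E=\e$. The argument has three steps.

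\textbf{Compactness.} First I would show that $\Phi_\lambda$ satisfies (PS)$_c$ for all $0<c<\frac1N S^{N/2}$, where $S$ is the Sobolev constant.
\begin{itemize}
\item \emph{Boundedness.} Since $2p<2^*$, for a (PS) sequence I would form $\Phi_\lambda(u_j)-\frac1{2^*}\Phi_\lambda'(u_j)u_j$. The gradient term and the Hartree term carry the positive coefficients $\frac12-\frac1{2^*}$ and $\frac1{2p}-\frac1{2^*}$, while the $L^q$ term has coefficient $-\lambda(\frac1q-\frac1{2^*})$. Because $q<2p$ and $q<2^*$, interpolation of $L^q$ between the Coulomb part and $L^{2^*}$ (the Coulomb–Sobolev inequality underlying Theorem \ref{Theorem 1}) lets the $L^q$ term be absorbed, giving boundedness.
\item \emph{Passing to the limit.} Take a weak limit $u$; by Theorem \ref{Theorem 1}, $u_j\to u$ in $L^q$ compactly, since $q\in(\p,2^*)$. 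The Hartree term satisfies a Brezis–Lieb splitting (nonlocal Brezis–Lieb), so $u$ is a critical point with $\Phi_\lambda(u)\ge0$; I would check this via the same $\frac1{2^*}$ combination.
\item \emph{Concentration-compactness.} For $v_j=u_j-u$, the standard argument gives $\|\nabla v_j\|^2-\|v_j\|_{2^*}^{2^*}\to0$. Either $v_j\to0$, or $c\ge\frac1N S^{N/2}$, which contradicts the level restriction.
\end{itemize}

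\textbf{Geometry via scaling.} Next I would produce the critical levels.
\begin{itemize}
\item Let $\M=\{u:\tfrac12\dev u+\tfrac1{2p}\col u=1\}$ (or the scaling-normalized manifold used in Theorem \ref{Theorem 301}). On it, $\lambda_k$ is the minimax over sets of cohomological index $\ge k$ of $1/\Psi$, with $\Psi(u)=\frac1q\sca u$ suitably normalized.
\item Since $\lambda_{k+m-1}=\lambda_k$, there is a compact symmetric $A_0\subset\{\Psi\ge1/\lambda_k-\varepsilon\}$ of index $k+m-1$. I would take $A_0$ inside a sublevel set and use the scaling $u\mapsto u_t$ to form the cone $X=\{u_t:u\in A_0,\ t\ge0\}$.
\item On the other side, $\{\Psi\le 1/\lambda_{k+m}\}$ has complementary index $\ge k+m-1$ relative to $A_0$; more precisely, its complement within $\M$ has index $\le k+m-1$. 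This gives pseudo-index sets with index $k-1$ on the complementary side.
\end{itemize}

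\textbf{Levels and the main obstacle.} I would then define $c_j=\inf_{\gamma(A)\ge j}\sup_A\Phi_\lambda$ over symmetric sets in a pseudo-index family, for $j=k,\dots,k+m-1$.
\begin{itemize}
\item \emph{Lower bound.} For $\lambda<\lambda_k$, small spheres in the scaled sense intersect $\{\Psi\le1/\lambda_k\}$, where $\Phi_\lambda$ is bounded below by a positive constant. This holds because at small scaling $t$, the $L^{2^*}$ term scales with a higher power than the scaled ones. Hence $c_j>0$.
\item \emph{Upper bound.} The main obstacle is showing $\sup_X\Phi_\lambda<\frac1NS^{N/2}$ when $\lambda$ is close to $\lambda_k$. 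On $X$, scaling gives
\[
\Phi_\lambda(u_t)=t^{\kappa}\bigl(1-\lambda\Psi(u)\bigr)-t^{\kappa'}\tfrac1{2^*}\cri u,\qquad \kappa'>\kappa.
\]
Maximizing over $t$ yields $\sup\lesssim(1-\lambda\Psi)_+^{\kappa'/(\kappa'-\kappa)}/(\min_{A_0}\|u\|_{2^*})^{\dots}$. Since $1-\lambda\Psi\le(\lambda_k-\lambda)/\lambda_k+O(\varepsilon)$ on $A_0$, this tends to $0$ as $\lambda\uparrow\lambda_k$. One must choose $\varepsilon$ depending on $\lambda_k-\lambda$, and $A_0$ compact so that $\|u\|_{2^*}$ stays bounded below. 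This choice gives $\delta_k$.
\end{itemize}
With $0<c_k\le\dots\le c_{k+m-1}<\frac1NS^{N/2}$, the index-theoretic multiplicity lemma then gives $m$ distinct pairs of critical points. Equal levels are handled through the index of the critical set: it is at least $2$ when levels coincide, so that set is infinite.
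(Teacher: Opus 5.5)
Your geometric half is the paper's proof of Theorem~\ref{Theorem 9} with $m=1$: Theorem~\ref{m1} with $c^*=S^{N/2}/N$, $A_0$ compact of index $k$ in $\M_s\setminus\w{\Psi}_{\lambda_k+\varepsilon}$, $B_0=\w{\Psi}_{\lambda_k}$, and explicit maximization in $t$. The compactness step, however, has two genuine gaps. Both come from treating $\lambda\sca{u}$ as a lower-order term.

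\textbf{Boundedness.} Since $q=q_{CS}$, the gradient, Coulomb and $L^q$ terms all scale like $t^s$ under $u\mapsto u_t$. Hence
\[
\Phi_\lambda(u_t)-\tfrac{1}{2^*}\Phi_\lambda'(u_t)u_t=t^s\Bigl[\bigl(\tfrac12-\tfrac1{2^*}\bigr)\dev{u}+\bigl(\tfrac1{2p}-\tfrac1{2^*}\bigr)\col{u}-\lambda\bigl(\tfrac1q-\tfrac1{2^*}\bigr)\sca{u}\Bigr].
\]
For any fixed $u\neq0$ the bracket is negative once $\lambda$ is large, and $\lambda\approx\lambda_k$ is large when $k$ is large. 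This quantity then tends to $-\infty$ as $t\to\infty$, so no interpolation can make it coercive. With the $1/2^*$ combination there is no $L^{2^*}$ term left to absorb into. The sharp scale-invariant bound $\frac1q\sca{u}\le\lambda_1^{-1}I_s(u)$ only handles $\lambda$ below a fixed fraction of $\lambda_1$.

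The paper instead argues by blow-up along the scaling. Set $\w{u}_j=(u_j)_{t_j}\in\M_s$ with $\w{t}_j\to\infty$. Dividing \eqref{ps4} by $\w{t}_j^s$ and using $\lambda>0$ gives $\cri{\w{u}_j}=O(\w{t}_j^{-2s/(N-2)})$. Interpolating $L^q$ between $L^l$, $l\in(\p,q)$ (bounded on $\M_s$ by Theorem~\ref{Theorem 1}), and $L^{2^*}$ gives $\sca{\w{u}_j}\to0$. Then $2^*\cdot$\eqref{ps3} minus \eqref{ps4}, divided by $\w{t}_j^s$, forces $\w{u}_j\to0$, contradicting $\w{u}_j\in\M_s$.

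Your absorption idea can be rescued only through this same mechanism. Use $\Phi_\lambda(u)-\theta\Phi_\lambda'(u)u$ with $\theta\in(1/2^*,1/(2p))$, which keeps a positive $(\theta-1/2^*)\cri{u}$. Combine it with $\lambda\sca{u}\le\epsilon I_s(u)+\epsilon\cri{u}+C_\epsilon$, proved by writing $u=v_t$ with $v\in\M_s$ and interpolating as above.

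\textbf{Sign of the weak limit.} You claim the weak limit satisfies $\Phi_\lambda(u)\ge0$ ``via the same $1/2^*$ combination''. Since $\Phi_\lambda'(u)u=0$, that combination is just the bracket above at $t=1$, which has no sign. Without $\Phi_\lambda(u)\ge0$, the Brezis--Lieb splitting only yields $c\ge\Phi_\lambda(u)+\frac1NS^{N/2}$, which contradicts nothing.

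The missing ingredient is the Pohozaev identity of Lemma~\ref{pohozaev identity}. Combined with the Nehari identity, it says $t\mapsto\Phi_\lambda(u_t)$ is stationary at $t=1$, i.e. $I_s(u)-\frac\lambda q\sca{u}=\frac12\cri{u}$, whence $\Phi_\lambda(u)=\frac1N\cri{u}\ge0$. This cannot be obtained by testing with $\frac{d}{dt}u_t|_{t=1}=\sigma u+x\cdot\nabla u$, which need not lie in $E$. The paper first proves local regularity via Kato's inequality, bootstrap, Riesz potential estimates and Calder\'on--Zygmund estimates.

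\textbf{Two smaller points.}
\begin{enumerate}
\item In your geometry bullet the relevant set is $B_0=\w{\Psi}_{\lambda_k}$, whose complement in $\M_s$ has index at most $k-1$, not $\{\Psi\le1/\lambda_{k+m}\}$. Your lower-bound bullet does use the right set.
\item The lower bound for $\cri{u}$ on $A_0$ must be uniform in $\varepsilon$. Compactness of a single $A_0$ does not give this, but $\frac1q\sca{u}>1/\lambda_{k+1}$ plus interpolation with $L^l$ does. That uniformity is what lets you fix $\varepsilon$ independently of $\lambda$ when extracting $\delta_k$.
\end{enumerate}
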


In general, equation \eqref{114} has arbitrarily many solutions for all sufficiently large $\mu > 0$, depending on $\lambda,$ as it is stated in the following

\begin{theorem} \label{Theorem 10}
Let $p < N/(N - 2)$. For any $\lambda \in \R$ and $m \ge 1$, $\exists \mu_m = \mu_m(\lambda) > 0$ such that equation \eqref{114} has $m$ distinct pairs of nontrivial solutions at positive energy levels for all $\mu > \mu_m$. In particular, the number of solutions goes to infinity as $\mu \to \infty$.
\end{theorem}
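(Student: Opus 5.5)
The plan is to use an abstract multiplicity theorem of Bartolo--Benci--Fortunato type, in the form based on the $\Z_2$-cohomological index $i$ of Fadell--Rabinowitz. The functional is
\[
\Phi_\mu(u) = \frac{1}{2} \int_{\R^N} |\nabla u|^2\, dx + \frac{1}{2p} \int_{\R^N} \abs{I_{\alpha/2} \star |u|^p}^2\, dx - \frac{\lambda}{q} \int_{\R^N} |u|^q\, dx - \frac{\mu}{\beta} \int_{\R^N} |u|^\beta\, dx - \frac{1}{2^\ast} \int_{\R^N} |u|^{2^\ast}\, dx .
\]
It is even and satisfies $\Phi_\mu(0) = 0$. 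Suppose that for some $c^\ast > 0$ the functional satisfies $(PS)_c$ for all $c \in (0,c^\ast)$, and that there are:
\begin{itemize}
\item a symmetric set $A_0$ with $i(A_0) \ge m$ and $\sup_{A_0} \Phi_\mu < c^\ast$;
\item a symmetric set $B_0$ with $i(S \setminus B_0) \le 0$ and $\inf_{B_0} \Phi_\mu > 0$.
\end{itemize}
Then $\Phi_\mu$ has $m$ distinct pairs of critical points with levels in $(0, c^\ast)$. Here $S$ is the unit sphere, or more naturally a scaled ``sphere'' adapted to the scaling $u_t$.

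\textbf{Step 1: local compactness.} We show that $(PS)_c$ holds for
\[
0 < c < c^\ast := \frac{1}{N}\, S_N^{N/2},
\]
where $S_N$ is the best Sobolev constant. Boundedness of Palais--Smale sequences uses $2p < 2^\ast$. One compares $\Phi_\mu(u_j) - \frac{1}{2p}\Phi_\mu'(u_j)u_j$ and $\Phi_\mu(u_j) - \frac{1}{\beta}\Phi_\mu'(u_j)u_j$ and splits into the cases $\beta \le 2p$ and $\beta > 2p$. The $|u|^q$ term and, when $\beta < 2p$, the $|u|^\beta$ term are controlled through the Coulomb norm via the embeddings of Theorem \ref{Theorem 1}, since $q, \beta \in (q_{\textrm{rad}}, 2^\ast)$ and $\beta > q$. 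After that, the weak limit together with the compact embedding of Theorem \ref{Theorem 1} into $L^q$ and $L^\beta$ handles the subcritical terms. A Brezis--Lieb splitting of the gradient and the $2^\ast$ terms shows that any loss of compactness costs at least $\frac{1}{N} S_N^{N/2}$. For the nonlocal term one needs a Brezis--Lieb property, or at least weak lower semicontinuity plus convergence of $\Phi_\mu'$ to zero tested on the limit. This is standard for the Coulomb term in the radial class.

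\textbf{Step 2: linking sets.} Take $A_0$ from the eigenvalue problem \eqref{103}. Consider the sublevel set
\[
\Psi^{\lambda_{m+1}} \text{ with } i = m,
\]
or simply an $m$-dimensional symmetric compact set of radial functions. Concretely, fix radial functions $v_1,\dots,v_m \in C_0^\infty$ with disjoint supports and let $A_0$ be the unit sphere of their span, which has index $m$. We then need $\sup_{A_0}\Phi_\mu < c^\ast$ after rescaling. Along the ray $s \mapsto (\tau v)_s$, or simply $t \mapsto t v$, the term $-\mu \int |u|^\beta\,dx$ dominates as $\mu \to \infty$. Hence
\[
\sup_{t \ge 0,\ v \in A_0} \Phi_\mu(tv) \le C \mu^{-\kappa}
\]
for some $\kappa > 0$, by a scaling computation on the compact set $A_0$. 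This uses the facts that $\int |v|^\beta\,dx$ is bounded below on $A_0$ and that the positive terms are bounded. We choose $\mu_m$ so that this bound is below $c^\ast$.

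For $B_0$ take a small sphere $\{\norm{u} = \rho\}$. Near $0$, every negative term is of higher order than the leading terms $\frac{1}{2}\int |\nabla u|^2\,dx + \frac{1}{2p}\int \abs{I_{\alpha/2} \star |u|^p}^2\,dx$. This is the subtle point: $q < 2p$, and in the $\norm{\cdot}$ scaling the $q$ term is not automatically of higher order. So I would instead use the scaled positive-definiteness, as in \cite{MePe2}, and take the scaled ``sphere'' $B_0 = \{u : \norm{u}=\rho\} \cap \{\text{the } \lambda \text{ term is dominated}\}$. Alternatively, apply the abstract theorem with $B_0$ chosen as $\{u \in \Psi_{\lambda_{k}}\}$ cones, using $i$ of the complement equal to $k-1$, which then yields $m$ pairs provided $i(A_0) \ge m + k - 1$. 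In that version $A_0$ is built with index $m + k - 1$.

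\textbf{Main obstacle.} I expect the main obstacle to be exactly this positivity on $B_0$ for arbitrary $\lambda$, since the $\lambda |u|^q$ term is scale-invariant relative to the $\mathcal{L}$ part. I would first try the index-shift version. Pick $k$ with $\lambda < \lambda_k$ and take $B_0$ to be the radial projection onto a small scaled sphere of the set $\Psi_{\lambda_k}$ where the Rayleigh-type quotient is at least $\lambda_k$. On $B_0$ the scaled estimate gives $\Phi_\mu \ge c\rho^\gamma - C\rho^{\gamma'}$ with $\gamma' > \gamma$, because $\beta, 2^\ast > q$ are superscaled near $0$. Then take $A_0$ of index $m + k - 1$ and enlarge $\mu_m$ accordingly.
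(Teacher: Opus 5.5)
Your linking architecture is essentially the paper's: Theorem \ref{m1} with $c^\ast=S^{N/2}/N$, $B_0=\widetilde{\Psi}_{\lambda_k}$ with $\lambda<\lambda_k$ placed on a small scaled sphere $\mathcal{M}_\rho$ (this is what beats the scale-invariant $\lambda$-term near $0$), $A_0$ of index $k+m-1$, and $\sup_X\Phi_\mu\to 0$ as $\mu\to\infty$. Your linear rays over a sphere in a span of disjointly supported radial functions would also work, since each linear ray meets each $\mathcal{M}_\rho$ exactly once. The genuine gap is in Step 1. Your Brezis--Lieb splitting only yields $c\ge \Phi_\mu(u)+\frac1N S^{N/2}$, where the weak limit $u$ is a critical point (Lemma \ref{lemma0}). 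To reach a contradiction you need $\Phi_\mu(u)\ge 0$, and here this does not follow from the Nehari identity. For $\lambda>0$, making the coefficient of $\int|u|^q$ in $\Phi_\mu(u)-\theta\,\Phi_\mu'(u)u$ nonnegative forces $\theta\ge 1/q>1/(2p)$. The Coulomb term then enters with the negative coefficient $\frac1{2p}-\theta$. Indeed, Nehari alone is compatible with $\Phi_\mu(u)<0$, for instance when $\int|I_{\alpha/2}\star|u|^p|^2\,dx\approx\lambda\int|u|^q\,dx$ are the dominant terms. Without a sign on $\Phi_\mu(u)$, the compactness threshold is only $\frac1N S^{N/2}+\inf\{\Phi_\mu(u):\Phi_\mu'(u)=0\}$. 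You have no control over this, in particular not uniformly in $\mu$, so the multiplicity theorem cannot be invoked.

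The missing ingredient is the Pohozaev identity (Lemma \ref{pohozaev identity}). It needs a regularity bootstrap, because $t\mapsto u_t$ is not differentiable in $E$. Combined with Nehari it is the rigorous version of $\frac{d}{dt}\Phi_\mu(u_t)\big|_{t=1}=0$, and it gives
\[
\Phi_\mu(u)=\frac{\mu\,\sigma(\beta-q)}{s\,\beta}\int_{\mathbb{R}^N}|u|^\beta\,dx+\frac1N\int_{\mathbb{R}^N}|u|^{2^\ast}\,dx\ \ge 0
\]
for $\mu>0$ and $\beta\ge q$, which is \eqref{ps14} in the paper.

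Your boundedness sketch has a problem of the same origin. The term $\lambda\int|u|^q$ scales exactly like $I_s$, so the continuous embeddings of Theorem \ref{Theorem 1} cannot absorb it. At $\theta=1/(2p)$ the Coulomb term disappears from the good side altogether, so it cannot control $\int|u|^q$ or $\int|u|^\beta$ as you claim. At $\theta=1/\beta$, the bound $|u|_l\le C\|u\|$ only produces a superquadratic power of $\|\nabla u\|_2$. The paper instead writes $u_j=(\tilde u_j)_{\tilde t_j}$ with $\tilde u_j\in\mathcal{M}_s$ and $\tilde t_j\to\infty$. Dividing the Nehari relation by $\tilde t_j^{\,s}$ forces $\int|\tilde u_j|^{2^\ast}=O(\tilde t_j^{-2s/(N-2)})$. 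Interpolation then gives $\int|\tilde u_j|^q\to0$ and $\tilde t_j^{\,\sigma(\beta-q)}\int|\tilde u_j|^\beta\to0$. Finally, $2^\ast\Phi_\mu(u_j)-\Phi_\mu'(u_j)u_j$, divided by $\tilde t_j^{\,s}$, contradicts $I_s(\tilde u_j)=1$, and this is where $2p<2^\ast$ is used.
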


In particular, we have the following corollary when $\lambda = 0$.

\begin{corollary}
Let $p < N/(N - 2)$. For any $m \ge 1$, $\exists \mu_m > 0$ such that the equation
\[
- \Delta u + (I_\alpha \star |u|^p)\, |u|^{p-2}\, u = \mu\, |u|^{\beta - 2}\, u + |u|^{2^\ast - 2}\, u \quad \text{in } \R^N,
\]
where $q < \beta < 2^\ast$, has $m$ distinct pairs of nontrivial solutions at positive energy levels for all $\mu > \mu_m$. In particular, the number of solutions goes to infinity as $\mu \to \infty$.
\end{corollary}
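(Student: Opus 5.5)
We prove Theorem \ref{Theorem 10} by applying an abstract multiplicity theorem for even functionals with a local compactness threshold, in the spirit of Bartolo--Benci--Fortunato or the cohomological-index version used in \cite{MePe2}. The functional
\[
\Phi_\mu(u) = \frac12\int|\nabla u|^2 + \frac1{2p}\int|I_{\alpha/2}\star|u|^p|^2 - \frac{\lambda}{q}\int|u|^q - \frac{\mu}{\beta}\int|u|^\beta - \frac1{2^*}\int|u|^{2^*}
\]
is even on $E=E^{\alpha,p}_{\rm rad}(\R^N)$. The plan has three steps, carried out in this order. First, $\Phi_\mu$ satisfies (PS)$_c$ for all $c<c^*:=\frac1N S^{N/2}$, where $S$ is the best Sobolev constant. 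Second, there is a $2m$-dimensional (or index-$m$) symmetric set on which $\Phi_\mu$ stays below $c^*$ once $\mu$ is large. Third, $\Phi_\mu$ is positive on a small sphere. The abstract theorem then gives $m$ pairs of critical points at positive levels in $(0,c^*)$.

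\textbf{Compactness.} Let $(u_j)$ be a (PS)$_c$ sequence. We evaluate $\Phi_\mu(u_j)-\frac1{2p}\Phi_\mu'(u_j)u_j$. Since $2p<2^*$ and $q<2p$, the $\beta$ and $2^*$ terms come with nonnegative coefficients if $\beta\ge 2p$. If $\beta<2p$, we absorb them via interpolation and the embedding of Theorem \ref{Theorem 1}\ref{i1}. The gradient term carries the coefficient $\frac12-\frac1{2p}>0$, and the $q$-term is controlled by interpolating between the Coulomb term and $\|\nabla u\|_2$. Together these give boundedness of $(u_j)$; this is where the hypothesis $p<N/(N-2)$ is used, as in the other theorems.

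Passing to a subsequence, $u_j\rightharpoonup u$ in $E$, and the embedding into $L^q$ and $L^\beta$ is compact because $q_{\rm rad}<q<\beta<2^*$. For the Hartree term we use a Brezis--Lieb splitting, or a weak-limit argument; when $\alpha\le2$ no restriction from $q^{\rm loc}_{CS}$ arises, and in general we invoke \cite{MR3568051}. Setting $v_j=u_j-u$, the standard Brezis--Nirenberg computation gives $\|\nabla v_j\|_2^2-\|v_j\|_{2^*}^{2^*}\to0$ and $\Phi_\mu(u)+\frac1N\lim\|\nabla v_j\|^2\le c$. We also have $\Phi_\mu(u)\ge0$, since $u$ is a critical point and, by the same identity, $\Phi_\mu(u)-\frac1{2^*}\Phi_\mu'(u)u\ge0$; this needs care with the sign of $\lambda$ and with the ordering of $q$, $\beta$ and $2p$ against $2^*$. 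Hence either $v_j\to0$ or $\lim\|\nabla v_j\|^2\ge S^{N/2}$, which contradicts $c<c^*$. Verifying $\Phi_\mu(u)\ge0$ uniformly in $\mu$ is the delicate point. If it fails, we instead use the threshold $c^*$ together with a lower bound on $\Phi_\mu$ at critical points, or restrict to levels below $c^*$ minus a constant.

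\textbf{Geometry.} Near $0$ the dominant terms are the gradient and Coulomb terms. Using the scaling $u_t$ and the fact that $q,\beta,2^*>q_{\rm rad}$, the inequality $\int|u|^s\le C\|u\|^{\theta_s}$ with $\theta_s>2$ for small $\|u\|$ shows that $\Phi_\mu\ge\rho_\mu>0$ on a small sphere $\|u\|=r_\mu$. For the upper estimate we fix an $m$-dimensional subspace $W_m$ of smooth radial compactly supported functions. On $W_m$ all norms are equivalent, so
\[
\sup_{W_m}\Phi_\mu\le\sup_{s>0}\Big(C s^2+C s^{2p}+|\lambda| C s^q-\frac{\mu}{\beta}c s^\beta\Big),
\]
where $s$ denotes the norm on $W_m$. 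Since $\beta>2$, this supremum tends to $0$ as $\mu\to\infty$: the maximum is attained at $s\sim\mu^{-1/(\beta-2)}$ for large $\mu$, where all terms are $o(1)$. This requires care because $q$ may be less than $2$, in which case the $\lambda s^q$ term dominates near $0$ when $\lambda>0$. Hence for $\mu>\mu_m$ we get $\sup_{W_m}\Phi_\mu<c^*$.

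\textbf{Conclusion and main obstacle.} The symmetric sets $W_m$ and the sphere $\{\|u\|=r_\mu\}$ link in the index sense: the sphere has index $\infty$ relative to $W_m$ by the intersection property of the Fadell--Rabinowitz index, since $\|\cdot\|$ is not a norm and we use the sphere with respect to $\|\cdot\|$, which is still a radial ray structure under the ray $s\mapsto su$ (monotone). The abstract theorem then yields $m$ distinct pairs of critical points with levels in $[\rho_\mu,\sup_{W_m}\Phi_\mu]\subset(0,c^*)$. The main obstacle is the first step, namely compactness below $c^*$. The nonstandard norm and the possibly noncompact Hartree term are handled through the radial compact embeddings and the Coulomb--Sobolev Brezis--Lieb lemma.
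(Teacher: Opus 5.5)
Your compactness step fails at the claim that the weak limit $u$ satisfies $\Phi_\mu(u)\ge 0$ ``by the same identity''. The Nehari identity alone does not give this. Indeed,
$\Phi_\mu(u)-\frac1{2^*}\Phi_\mu'(u)u=\frac1N\int|\nabla u|^2+(\frac1{2p}-\frac1{2^*})\int|I_{\alpha/2}\star|u|^p|^2-\mu(\frac1\beta-\frac1{2^*})\int|u|^\beta$, and its last term is negative. More generally, in $\Phi_\mu(u)-a\,\Phi_\mu'(u)u$ you need $a\le 1/(2p)$ to keep the Coulomb term and $a\ge 1/\beta$ to control the $\beta$-term. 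These are incompatible when $q<\beta<2p$, a range the statement allows.

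The missing ingredient is the Pohozaev identity of Lemma \ref{pohozaev identity}, which relies on the regularity bootstrap proved there. Eliminating the gradient and Coulomb terms between the Nehari and Pohozaev identities gives, at any critical point with $\lambda=0$,
$\Phi_\mu(u)=\frac1N\int|u|^{2^*}+\mu\big(\frac12-\frac1\beta+\frac{N-2}{2s}-\frac{N}{\beta s}\big)\int|u|^\beta$.
The bracket is nonnegative exactly when $\beta\ge q$. This is where the hypothesis $\beta>q$ is really used, and it is what turns your dichotomy into $c\ge\frac1N S^{N/2}$. Your fallback (a lower bound on critical values, or a lowered threshold) does not close the gap. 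Nothing in the proposal produces such a bound, and it would have to be uniform in $\mu$.

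Three further steps also fail as written when $\beta<2p$.
\begin{itemize}
\item \emph{Boundedness of (PS) sequences.} In $\Phi_\mu(u_j)-\frac1{2p}\Phi_\mu'(u_j)u_j$ the Coulomb term disappears. Moreover, since $\beta>q$, $\int|u|^\beta$ grows faster than $I_s$ along $t\mapsto u_t$, so it cannot be absorbed by interpolation against the gradient and Coulomb terms. The paper instead writes $u_j=(\w{u}_j)_{\w{t}_j}$ with $\w{u}_j\in\M_s$. It shows that $\w{t}_j\to\infty$ forces $\int|\w{u}_j|^{2^*}=O(\w{t}_j^{-2s/(N-2)})$, then that the rescaled $\beta$-term vanishes, and finally that $\w{u}_j\to0$, which contradicts $\w{u}_j\in\M_s$.
\item \emph{Upper bound on $W_m$.} Your bound drops $-\frac1{2^*}\int|u|^{2^*}$. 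Without it the supremum is $+\infty$, because the power $2p$ beats $\beta$ for large norms. With it kept, the supremum does tend to $0$ as $\mu\to\infty$.
\item \emph{Lower bound on the sphere.} On the norm sphere $\{\|u\|=r\}$ with small $r$ one only has $I_s(u)\gtrsim r^{2p}$, so $\int|u|^\beta\le Cr^\beta$ is too weak when $\beta\le 2p$. Use instead $\int|u|^\beta\le C\,I_s(u)^{(\sigma\beta-N)/s}$, whose exponent exceeds $1$ because $\beta>q$. This follows from the scaling and the boundedness of $\M_s$ in $L^\beta$, and it is why the paper works on $\M_\rho$.
\end{itemize}

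With these repairs your route is sound. Note that $\|\cdot\|$ is a genuine norm, so a Bartolo--Benci--Fortunato type pseudo-index theorem applies directly. Using an $m$-dimensional subspace of radial test functions (equivalently, in Corollary \ref{m2_corollary}, taking $A_0$ to be the projection onto $\M_s$ of its unit sphere) is then a legitimate and simpler alternative to the paper's eigenvalue-based sets $A_0\subset\M_s\setminus\w{\Psi}_{\lambda_{k+m}}$, since $\lambda=0$ here.
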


\subsection{The case $p>(N+\alpha)/(N-2)$}

Now we consider the case $p>(N+\alpha)/(N-2)$ and assume that $f$ satisfies the Sobolev-supercritical growth condition \eqref{102} with $2^\ast \le q_1 < q_2 < q_{\textrm{rad}}$. Note again that this case occurs only when $N\geq3$. We will consider the subscaled and asymptotically scaled cases separately. We stress that in both cases, because of the different geometry of the relevant energy functionals, the boundedness of PS sequences holds without the assumption $2p<2^*.$

\subsubsection{Subscaled case}

In the subscaled case we have the following existence results.

\begin{theorem} \label{Theorem 11}
Let $p>(N+\alpha)/(N-2)$. Assume that $f$ satisfies \eqref{102} with $q < q_1 < q_2 < q_{\textrm{rad}}$. Then equation \eqref{101} has a solution.
\end{theorem}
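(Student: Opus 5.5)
Theorem \ref{Theorem 11} will follow by showing that $\Phi$ is coercive and weakly lower semicontinuous on $E = \e$, and then taking a global minimizer; the existence of a minimizer needs no Palais--Smale argument. The minimizer is a critical point of $\Phi$, hence a weak solution of \eqref{101}.

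\emph{Step 1 (a growth bound for $F$).} Integrating \eqref{102} gives
\[
|F(|x|,t)| \le \frac{a_1}{q_1}|t|^{q_1} + \frac{a_2}{q_2}|t|^{q_2} + a(x)|t|.
\]
Here $q<q_1<q_2<q_{\textrm{rad}}$ and $1/p<(N-2)/(N+\alpha)$. By Theorem \ref{Theorem 1}\ref{i2}, $E$ embeds compactly into $L^{q_i}(\R^N)$. Since $a\in L^r$ with $r$ in the admissible range, $E$ also embeds compactly into $L^{r'}$ with $r' \in [2^\ast,q_{\textrm{rad}})$. This makes the functional $u\mapsto \int F(|x|,u)\,dx$ weakly continuous on $E$. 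It is routine, by Nemytskii operator continuity and the compact embeddings.

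\emph{Step 2 (coercivity via scaling).} This is the main obstacle. A pure embedding bound $|u|_{q_i}\le C\|u\|$ does not suffice, because the norm mixes a quadratic term with a $2p$-homogeneous term. Instead I would exploit the scaling $u_t(x)=t^{-\sigma}u(x/t)$ through an interpolation inequality with explicit powers of the two energy pieces,
\[
|u|_{s}^{s} \le C \Big(\int|\nabla u|^2\Big)^{\theta_1(s)}\Big(\int |I_{\alpha/2}\star|u|^p|^2\Big)^{\theta_2(s)},
\]
valid for radial $u$ and $s\in(2^\ast,q_{\textrm{rad}})$. This can be obtained from the proofs in \cite{MR3568051}, or derived by combining the radial Strauss-type decay with the Coulomb bound. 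The exponents are fixed by invariance under the two scalings $u\mapsto \mu u$ and $u\mapsto u_t$. For $s=q$ they give exactly $\theta_1\cdot 2+\theta_2\cdot 2p$ weighting matching the energy, i.e.\ $|u|_q^q$ is controlled by $A+B$ with $A=\int|\nabla u|^2$ and $B=\int|I_{\alpha/2}\star|u|^p|^2$.

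The key claim is that for $q<s<q_{\textrm{rad}}$ in the supercritical regime the scaling exponent is smaller. Concretely, $\theta_1(s)+\theta_2(s)<1$, with the homogeneity balanced so that $|u|_s^s \le \varepsilon(A+B)+C_\varepsilon$. This is exactly the meaning of ``subscaled'': along $u_t$ with $t$ large the $L^s$ term grows more slowly than the energy. Equivalently, I can argue by contradiction. Suppose $\|u_n\|\to\infty$ with $\Phi(u_n)$ bounded above. Normalize by scaling to $v_n$ with $A(v_n)+B(v_n)=1$, so that $v_n$ is bounded in $E$. Then the scaling factor $t_n\to\infty$ or the amplitude blows up, and $\int F(|x|,u_n)\,dx/(A+B)(u_n)\to 0$ because $q_i\neq q$ lies on the subscaled side. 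The $a(x)|u|$ term is handled by Hölder and the $L^{r'}$ embedding, which is of lower order in the same sense. I expect verifying the exponent inequality $\theta_1+\theta_2<1$ for $s\in(q,q_{\textrm{rad}})$ to be the delicate computation, so I would check it first.

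\emph{Step 3 (minimization).} With coercivity in hand, the rest is standard. $E$ is reflexive, being uniformly convex as in \cite{MR3568051}. The gradient and Coulomb terms are convex and continuous, hence weakly lower semicontinuous, and Step 1 makes the $F$-term weakly continuous. Therefore $\Phi$ attains its infimum at some $u$. Because $\Phi\in C^1$, $\Phi'(u)=0$, and $u$ solves \eqref{101}.
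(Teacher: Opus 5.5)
Your proof is correct and is essentially the paper's argument. Your normalization $u_n=(v_n)_{t_n}$ with $t_n\to\infty$ is exactly the estimate of Lemma~\ref{Lemma1}\,\ref{Lemma1-1} that gives coercivity in Theorem~\ref{Theorem 302}, and your product inequality (with $\theta_1+\theta_2=(N-\sigma\ell)/s<1$ for a Lebesgue exponent $\ell>q$) follows from the continuous embedding of Theorem~\ref{Theorem 1} by optimizing over dilations $u\mapsto u(\cdot/t)$. The only difference is that you finish by the direct method, where the paper gets (PS) from Proposition~\ref{PS compact} and takes the minimum.

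One small correction: when $r=2^\ast/(2^\ast-1)$ the embedding into $L^{r'}=L^{2^\ast}$ is only continuous, not compact. Weak continuity of the $a(x)$-part of $\int F(|x|,u)\,dx$ should instead come from uniform integrability of $a\,|u_n|$ (fixed $a\in L^r$, $(u_n)$ bounded in $L^{2^\ast}$, a.e.\ convergence) together with Vitali's theorem.
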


We give sufficient conditions for the existence of nontrivial solutions when
\begin{equation} \label{116}
f(|x|,t) = \lambda\, |t|^{q-2}\, t + g(|x|,t),
\end{equation}
where $g$ is a Carath\'{e}odory function on $[0,\infty) \times \R$ satisfying
\begin{equation} \label{117}
|g(|x|,t)| \le a_3\, |t|^{q_3 - 1} + a_4\, |t|^{q_4 - 1} \quad \text{for a.a.\! } x \in \R^N \text{ and all } t \in \R
\end{equation}
for some $a_3, a_4 > 0$ and $2^\ast < q_3 < q_4 < q$.

\begin{theorem} \label{Theorem 12}
Let $p>(N+\alpha)/(N-2)$. Assume that $f$ satisfies \eqref{102} with $q < q_1 < q_2 < q_{\textrm{rad}}$, \eqref{116} and \eqref{117} hold, and that $\lambda$ is not an eigenvalue of problem \eqref{103}.
\begin{enumroman}
\item If $\lambda > \lambda_1$, then equation \eqref{101} has a nontrivial solution.
\item If $\lambda > \lambda_2$, then equation \eqref{101} has two nontrivial solutions.
\end{enumroman}
\end{theorem}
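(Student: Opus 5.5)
The plan is to mirror the proof of Theorem \ref{Theorem 3}, transposed to the scaling $u_t(x) = t^{-\sigma} u(x/t)$ that is relevant when $p>(N+\alpha)/(N-2)$. The inequalities are reversed here: $q_{\textrm{rad}} > q > 2^\ast$. So under the scaling, terms of order $|u|^s$ with $s<q$ (in particular $q_3,q_4\in(2^\ast,q)$) become negligible as $t\to 0$. Terms with $s>q$ (here $q_1,q_2$) are negligible as $t\to\infty$, which is where coercivity comes from.

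First, global geometry and compactness. By Theorem \ref{Theorem 1} \ref{i2}, $E$ embeds compactly in $L^{s}$ for $2^\ast<s<q_{\textrm{rad}}$. The growth \eqref{102} with $q<q_1<q_2<q_{\textrm{rad}}$ will be used to show that $\Phi$ is coercive and bounded below on $E$. The idea is to write a general $u$ as a scaled version $u=(v)_t$ of an element $v$ of the scaling-normalized set $\M=\set{\rho(u)=1}$, where $\rho(u)=\int|\nabla u|^2 + (\int|I_{\alpha/2}\star|u|^p|^2)^{1/p}$ or an equivalent homogeneous quantity. Then:
\[
\Phi(v_t)=t^{\kappa}\Big[I(v) - t^{\sigma(q-s)}\textstyle\int|v|^{s}\cdots\Big],
\]
where $I(v)$ is the scaled part, which is bounded below on $\M$. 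The subscaled terms vanish relative to $I$ as $t\to\infty$. Since the Coulomb–Sobolev norm also controls everything, bounded sublevel sets follow. Palais–Smale sequences are then bounded, with no need for $2p<2^\ast$, and compactness of the embedding gives (PS) in the standard way. As in Theorem \ref{Theorem 11}, $\Phi$ attains its global minimum.

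Second, local analysis at $0$ via scaling-based local linking. Near $u=0$ (small $t$), the dominant part of $\Phi$ is
\[
\Phi_\lambda(u)=\tfrac12\int|\nabla u|^2+\tfrac1{2p}\int|I_{\alpha/2}\star|u|^p|^2-\tfrac{\lambda}{q}\int|u|^q,
\]
because $g$ has exponents $q_3,q_4\in(2^\ast,q)$, which are of higher order as $t\to0$ in the reversed scaling. I will invoke the preliminary critical point results of Section 3: the minimax eigenvalues $\lambda_k$ are defined via the cohomological index on $\M$, and, for $\lambda\in(\lambda_k,\lambda_{k+1})$ not an eigenvalue, there is the critical group estimate $C_k(\Phi,0)\ne 0$ (the local linking based on scaling sets $\set{v_t: v\in\Psi^{\lambda_k}, t\le\rho}$). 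Since $\lambda$ is not an eigenvalue, $0$ is an isolated critical point of the asymptotic functional with nontrivial $C_k$, $k\ge1$. When $\lambda>\lambda_1$ we also get $\Phi<0$ along small scalings of a first-eigenvalue-type direction. Hence $0$ is not a global minimizer, and the global minimizer $u_1\ne0$ gives (i).

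For (ii), with $\lambda>\lambda_2$, I will use $C_k(\Phi,0)\neq0$ with $k\ge2$, while $C_*(\Phi,u_1)=\delta_{*0}$ at the minimizer (assuming finitely many critical points, else we are done). The sublevels of the coercive functional give $C_*(\Phi,\infty)=\delta_{*0}$. The Morse relations then force a third critical point $u_2\ne0,u_1$. If $\Phi$ has a minimizer at negative level, one can alternatively use a mountain pass between $u_1$ and its partner in the reflection-free setting, whose critical groups are in degree $1$, different from those at $0$.

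The main obstacle will be the scaling estimates near $0$. I need to show that $g$ is genuinely negligible in the reversed regime: the uniform bound $\int|G(u_t)|\le C(t^{\sigma(q-q_3)}+t^{\sigma(q-q_4)})\cdot t^{\kappa}$ must be $o(t^\kappa)$ uniformly on $\M$. This in turn requires that $\M$ be bounded in $L^{q_3}\cap L^{q_4}$, which holds by Theorem \ref{Theorem 1} \ref{i2} since $2^\ast<q_3<q_4<q_{\textrm{rad}}$.
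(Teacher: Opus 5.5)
Your route is essentially the paper's. The paper checks \eqref{3-3} and \eqref{3-5} via Lemma~\ref{Lemma1}; your ``main obstacle'' paragraph is exactly Lemma~\ref{Lemma1}\,(ii) in the reversed scaling. It then applies the abstract Theorem~\ref{Theorem 303}, whose proof is the scheme you describe: coercivity, (PS) and a global minimizer $u_1$, plus a scaled local linking with $A_0=\w{\Psi}^{\lambda_k}$, $B_0=\w{\Psi}_{\lambda_{k+1}}$ giving $C^k(\Phi,0)\neq0$ (compare the proof of Theorem~\ref{Theorem 13}). Your proof of (i) takes some $u\in\M_s$ with $\w{\Psi}(u)<\lambda$ and uses $\Phi(u_t)=t^s\bigl(1-\lambda/\w{\Psi}(u)+o(1)\bigr)<0$ for small $t$. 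This is a correct and more elementary replacement for the critical-group comparison $C^k(\Phi,0)\neq0=C^k(\Phi,u_1)$.

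The step in (ii) does not work as written. Set $M_u(\tau)=\sum_q\dim C^q(\Phi,u)\,\tau^q$. The global Morse relations built from $C^*(\Phi,\infty)=\delta_{*0}\Z_2$, $C^*(\Phi,u_1)=\delta_{*0}\Z_2$ and only the information $C^k(\Phi,0)\neq0$ do not force a third critical point. They are satisfied with exactly the two critical points $0,u_1$ if, say, $C^*(\Phi,0)$ is $\Z_2$ in degrees $k$ and $k+1$ and zero otherwise, since then $\sum_u M_u(\tau)-1=\tau^k+\tau^{k+1}=(1+\tau)\tau^k$.

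What is missing is the level information $\Phi(u_1)=\min\Phi<0=\Phi(0)$. Suppose $0,u_1$ were the only critical points, and take $\Phi(u_1)<a<0<b$. By the deformation lemma and (PS), $\Phi^a$ retracts onto $\set{u_1}$ and $W$ retracts onto $\Phi^b$. So both sublevels have the cohomology of a point, and the exact sequence of the pair gives $C^*(\Phi,0)\cong H^*(\Phi^b,\Phi^a)=0$, a contradiction. The paper packages this through Proposition~\ref{Proposition 4-1}. It yields $u_2\neq0$ with $C^{k-1}(\Phi,u_2)\neq0$ or $C^{k+1}(\Phi,u_2)\neq0$, and $u_2\neq u_1$ because $k\ge2$ and $C^*(\Phi,u_1)=\delta_{*0}\Z_2$.

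Finally, drop the mountain-pass alternative, for three reasons. $\Phi$ is not even, so $u_1$ has no critical ``partner''. $0$ is not a local minimum when $\lambda>\lambda_1$, so there is no second valley. Nothing is known about $C^1(\Phi,0)$, so such a critical point could not be distinguished from $0$ anyway.
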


The assumption that $\lambda$ is not an eigenvalue can be removed when the primitive $G(|x|,t) = \int_0^t g(|x|,\tau)\, d\tau$ of $g$ is negative for $t \in \R \setminus \set{0}$.

\begin{theorem} \label{Theorem 13}
Let $p>(N+\alpha)/(N-2)$. Assume that $f$ satisfies \eqref{102} with $q < q_1 < q_2 < q_{\textrm{rad}}$, \eqref{116} and \eqref{117} hold, and that
\begin{equation} \label{118}
G(|x|,t) < 0 \quad \text{for a.a.\! } x \in \R^N \text{ and all } t \in \R \setminus \set{0}.
\end{equation}
\begin{enumroman}
\item If $\lambda > \lambda_1$, then equation \eqref{101} has a nontrivial solution.
\item If $\lambda > \lambda_2$, then equation \eqref{101} has two nontrivial solutions.
\end{enumroman}
\end{theorem}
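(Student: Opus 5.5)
We prove Theorem \ref{Theorem 13} the same way as Theorem \ref{Theorem 4}, now in the supercritical regime $p>(N+\alpha)/(N-2)$, where the scaling is $u_t(x)=t^{-\sigma}u(x/t)$.

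\textbf{Reduction to the resonant case.} If $\lambda$ is not an eigenvalue of \eqref{103}, both parts follow from Theorem \ref{Theorem 12}. So we assume $\lambda=\lambda_k$ for some $k$. In part (i) we have $\lambda_k>\lambda_1$, and in part (ii) $\lambda_k>\lambda_2$.

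\textbf{Global structure.} Since $q<q_1<q_2<q_{\textrm{rad}}$, the nonlinearity $f$ is subscaled. The Hartree and gradient terms therefore dominate $F$ along the scaling as $t\to0$, which corresponds to large norms in this case. I expect this to give coercivity of $\Phi$ on $E$, exactly as in the proof of Theorem \ref{Theorem 11}. Boundedness from below then follows, and the compact embeddings of Theorem \ref{Theorem 1}(ii) give the Palais--Smale condition. Hence $\Phi$ has a global minimizer. Its critical groups satisfy $C_j(\Phi,\infty)=\delta_{j0}\Z_2$.

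\textbf{Local structure at $0$.} Near $u=0$ we write
\[
\Phi(u)=I_\lambda(u)-\int_{\R^N} G(|x|,u)\,dx,
\]
where $I_\lambda$ is the scaled functional of the eigenvalue problem. By \eqref{117} with $2^\ast<q_3<q_4<q$, the term $\int G$ is of higher scaling order than the $|u|^q$ term at small norms. The idea is a local linking based on the cohomological-index eigenvalues.

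Take the scaled cone $A$ built from the sublevel set $\{I_{\lambda_k}\le 0\}$ of functions of index $\ge k$. The case $\lambda>\lambda_{k-1}$ is the one we care about, since $\lambda_k>\lambda_1$ gives $\lambda>\lambda_1$. On the part of $A$ where $I_\lambda\le 0$, the condition $G<0$ from \eqref{118} gives $\Phi>0$... wait, the sign goes the other way; let me recheck it.

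We have $-\int G>0$ when $G<0$, so $-\int G$ pushes $\Phi$ upward. The correct use is therefore on the complementary side. On the set where $I_\lambda\ge0$, which contains $\{I_{\lambda_k}\ge 0\}$, this gives $\Phi>0$ for small $u\ne0$. On the set where $I_{\lambda_{k-1}}$ is already negative (the strict side, away from resonance), the scaling argument makes $\Phi<0$.

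Resonance is exactly what \eqref{118} controls: on the eigenset $\{I_{\lambda_k}=0\}$, the strict negativity of $G$ gives strict positivity of $\Phi$. Mimicking \cite{MePe2}, this yields a local linking at $0$ in dimension $k-1$, so $C_{k-1}(\Phi,0)\ne0$ with $k-1\ge1$.

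\textbf{Conclusion.} For part (i): since $C_{k-1}(\Phi,0)\ne0$ with $k-1\ge1$ while $C_0(\Phi,\infty)\ne0$, the origin is not the global minimizer, so $0$ is not the only critical point, and we obtain a nontrivial solution. For part (ii): with $k\ge3$, the minimizer $u_0\ne0$ has $C_0(\Phi,u_0)\ne0$. A standard Morse-type argument, or a mountain pass between $u_0$ and the local structure at $0$, then produces a second nontrivial critical point. If only $0$ and $u_0$ were critical, the Morse relations would force $C_j(\Phi,0)=0$ for $j\ge1$, contradicting $C_{k-1}(\Phi,0)\ne0$.

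\textbf{Main obstacle.} The hardest step is making the resonant local linking at $0$ rigorous. It requires that on the compact eigenset of $\lambda_k$ the scaled quantity $\int G(|x|,u_t)\,dx$ stays strictly negative with a uniform rate relative to the vanishing scaling terms. Because we are in the supercritical regime, this has to be done with the reversed scaling $u_t(x)=t^{-\sigma}u(x/t)$ and the radial embeddings of Theorem \ref{Theorem 1}(ii).
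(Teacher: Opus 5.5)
Your outline follows the paper's proof. Coercivity in the subscaled regime gives (PS) and a global minimizer $u_0$ with $C^l(\Phi,u_0)\approx\delta_{l0}\Z_2$. A scaled local linking at $0$ built from $G<0$ then gives a nontrivial critical group at $0$ through Theorem \ref{Theorem 305}, and (i) follows by comparing the two; splitting off the nonresonant case via Theorem \ref{Theorem 12} is harmless.

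The gap is in (ii). Suppose $0$ and $u_0$ were the only critical points. The Morse relations then only give $\sum_j \dim C_j(\Phi,0)\,t^j=(1+t)Q(t)$ with $Q$ having nonnegative coefficients. This is compatible with, e.g., $C_{k-2}(\Phi,0)=C_{k-1}(\Phi,0)=\Z_2$, so it does not force $C_j(\Phi,0)=0$. Your argument also never uses $k\ge 3$, and ``a mountain pass between $u_0$ and the local structure at $0$'' is not an argument.

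The paper closes this step with Proposition \ref{Proposition 4-1}. In your indexing (linking dimension $k-1$) it gives $u_1\ne 0$ with either $\Phi(u_1)<0$ and $C^{k-2}(\Phi,u_1)\ne 0$, or $\Phi(u_1)>0$ and $C^{k}(\Phi,u_1)\ne 0$. Since $k-1\ge 2$, which is exactly where $\lambda>\lambda_2$ enters, neither group is nonzero at $u_0$, so $u_1\ne u_0$.

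Alternatively, you can make your contradiction rigorous, but only by using the level ordering $\Phi(u_0)<0=\Phi(0)$; the local linking makes $\inf\Phi<0$. For $\Phi(u_0)<b<0$ one has $H_*(E,\Phi^b)\cong C_*(\Phi,0)$ and $H_*(\Phi^b)\cong C_*(\Phi,u_0)=\delta_{*0}\Z_2$. Since $H_0(\Phi^b)\to H_0(E)$ is an isomorphism, the exact sequence of the pair forces $C_*(\Phi,0)=0$ (this version does not even need $k\ge 3$).

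The local linking also has to be set up precisely, and your write-up of it is muddled. Take $k$ minimal with $\lambda_k=\lambda$, so that $\lambda_{k-1}<\lambda_k$. Use the closed sets $A_0=\widetilde{\Psi}^{\lambda_{k-1}}$ and $B_0=\widetilde{\Psi}_{\lambda_k}$, which satisfy $i(A_0)=i(\M_s\setminus B_0)=k-1$ by Theorem \ref{Theorem 301}(iii). The set where $I_{\lambda_{k-1}}<0$, i.e.\ $\{\widetilde{\Psi}<\lambda_{k-1}\}$, has index at most $k-2$, so it is the wrong choice.

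On $A_0$ you get $\Phi(u_t)\le t^s\left(1-\lambda/\lambda_{k-1}+o(1)\right)<0$ for small $t>0$. This holds because $\int G(|x|,u_t)\,dx=o(t^s)$ uniformly on the bounded set $\M_s$, by \eqref{117} (Lemma \ref{Lemma1}(ii)). On $B_0$ you get $\Phi(u_t)\ge-\int G(|x|,u_t)\,dx>0$ for every $t>0$ directly.

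So your ``main obstacle'' is misplaced: no uniform rate is needed on the $B_0$ side. The uniformity is needed on the $A_0$ side, and Lemma \ref{Lemma1} supplies it. This is Lemma \ref{Lemma2}(i), which treats all $\lambda\in(\lambda_{k-1},\lambda_k]$ at once, so the resonant/nonresonant split is unnecessary.

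Finally, with $u_t(x)=t^{-\sigma}u(x/t)$ one has $s>0$, so large norms correspond to $t\to\infty$, not $t\to 0$. Coercivity comes from \eqref{3-3} as $t\to\infty$.
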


For example, consider the equation
\begin{equation} \label{119}
- \Delta u + (I_\alpha \star |u|^p)\, |u|^{p-2}\, u = \frac{\lambda\, |u|^{q + \beta - 2}\, u}{1 + |u|^\beta} \quad \text{in } \R^N,
\end{equation}
where $0 < \beta < \min \set{q_{\textrm{rad}} - q,q - 2^\ast}$. The nonlinearity $f(t) = \lambda\, |t|^{q + \beta - 2}\, t/(1 + |t|^\beta)$ satisfies
\[
|f(t)| \le |\lambda|\, |t|^{q + \beta - 1} \quad \forall t \in \R
\]
and $g(t) = f(t) - \lambda\, |t|^{q-2}\, t = - \lambda\, |t|^{q - 2}\, t/(1 + |t|^\beta)$ satisfies
\[
|g(t)| \le |\lambda|\, |t|^{q - \beta - 1} \quad \forall t \in \R.
\]
So the following corollary is immediate from Theorem \ref{Theorem 13}.

\begin{corollary}
Let $p>(N+\alpha)/(N-2)$.
\begin{enumroman}
\item If $\lambda > \lambda_1$, then equation \eqref{119} has a nontrivial solution.
\item If $\lambda > \lambda_2$, then equation \eqref{119} has two nontrivial solutions.
\end{enumroman}
\end{corollary}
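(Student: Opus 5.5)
The statement to prove is the last corollary: for $p>(N+\alpha)/(N-2)$, equation \eqref{119} has a nontrivial solution if $\lambda>\lambda_1$, and two nontrivial solutions if $\lambda>\lambda_2$. The plan is to apply Theorem \ref{Theorem 13} directly, so we only need to check its hypotheses for
\[
f(t)=\lambda|t|^{q+\beta-2}t/(1+|t|^\beta), \qquad g(t)=f(t)-\lambda|t|^{q-2}t=-\lambda|t|^{q-2}t/(1+|t|^\beta).
\]
There are three hypotheses: the growth condition \eqref{102} with $q<q_1<q_2<q_{\textrm{rad}}$, the decomposition \eqref{116} with \eqref{117} for some $2^\ast<q_3<q_4<q$, and the sign condition \eqref{118}.

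\textbf{Growth condition \eqref{102}.} Since $|t|^{q+\beta}/(1+|t|^\beta)\le |t|^{q+\beta}$, we have $|f(t)|\le|\lambda||t|^{q+\beta-1}$. The assumption $\beta<q_{\textrm{rad}}-q$ gives $q<q+\beta<q_{\textrm{rad}}$. Choose $q_1<q+\beta<q_2$, both inside $(q,q_{\textrm{rad}})$. Then $|t|^{q+\beta-1}\le |t|^{q_1-1}+|t|^{q_2-1}$ (use the first term for $|t|\le1$ and the second for $|t|\ge1$). So \eqref{102} holds with $a\equiv0$.

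\textbf{Conditions \eqref{116}--\eqref{117}.} The decomposition \eqref{116} holds by the definition of $g$. Since $1/(1+|t|^\beta)\le |t|^{-\beta}$, we get $|g(t)|\le|\lambda||t|^{q-\beta-1}$. The assumption $\beta<q-2^\ast$ gives $2^\ast<q-\beta<q$. Choose $q_3<q-\beta<q_4$, both inside $(2^\ast,q)$, and bound $|t|^{q-\beta-1}$ by $|t|^{q_3-1}+|t|^{q_4-1}$ in the same way. The constants $a_3,a_4$ must be positive, so we take $a_3=a_4=\max\{|\lambda|,1\}$. This also covers the trivial case $\lambda=0$, although the statement only concerns $\lambda>\lambda_1>0$.

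\textbf{Sign condition \eqref{118}.} Here $G(t)=-\lambda\int_0^t |\tau|^{q-2}\tau/(1+|\tau|^\beta)\,d\tau$. The integrand has the sign of $\tau$, so the integral is positive for every $t\ne0$. Since $\lambda>\lambda_1>0$ (Theorem \ref{Theorem 301}), it follows that $G(t)<0$ for $t\neq0$, which is \eqref{118}. This is the only point where $\lambda>0$ is used.

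With all hypotheses verified, Theorem \ref{Theorem 13} yields one nontrivial solution when $\lambda>\lambda_1$ and two when $\lambda>\lambda_2$. There is no genuine obstacle. The only care needed is that the exponents land strictly inside the open intervals required by \eqref{102} and \eqref{117}, and this is exactly what the constraint $\beta<\min\{q_{\textrm{rad}}-q,\,q-2^\ast\}$ guarantees.
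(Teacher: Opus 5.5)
Your proposal is correct and follows the paper's route: the paper also derives this corollary as an immediate consequence of Theorem~\ref{Theorem 13}, using the bounds $|f(t)|\le|\lambda|\,|t|^{q+\beta-1}$ and $|g(t)|\le|\lambda|\,|t|^{q-\beta-1}$. You additionally spell out two steps the paper leaves implicit: splitting each single-power bound into the two-exponent forms \eqref{102} and \eqref{117}, and checking the sign condition \eqref{118} via $\lambda>\lambda_1>0$.
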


\subsubsection{Asymptotically scaled case}

In the asymptotically scaled case we consider the equation
\begin{equation} \label{120}
- \Delta u + (I_\alpha \star |u|^p)\, |u|^{p-2}\, u = \lambda\, |u|^{q-2}\, u + g(|x|,u) \quad \text{in } \R^N,
\end{equation}
where $g$ is a Carath\'{e}odory function on $[0,\infty) \times \R$ satisfying
\begin{equation} \label{121}
|g(|x|,t)| \le a_3\, |t|^{q_3 - 1} + h(x) \quad \text{for a.a.\! } x \in \R^N \text{ and all } t \in \R
\end{equation}
for some $a_3 > 0$, $q < q_3 < q_{\textrm{rad}}$, and $h \in L^\infty(\R^N) \cap L^r(\R^N)$ with $r \in (q_{\textrm{rad}}/(q_{\textrm{rad}} - 1),2^\ast/(2^\ast - 1)]$. We have the following existence result.

\begin{theorem} \label{Theorem 14}
Let $p>(N+\alpha)/(N-2)$. Assume that \eqref{121} holds and that $\lambda$ is not an eigenvalue of problem \eqref{103}. Then equation \eqref{120} has a solution.
\end{theorem}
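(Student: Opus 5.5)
We argue as for Theorem \ref{Theorem 5}, now using the scaling $u_t(x) = t^{-\sigma} u(x/t)$ that belongs to the regime $p>(N+\alpha)/(N-2)$. With
\[
I(u) = \frac12 \int_{\R^N} |\nabla u|^2 dx + \frac1{2p}\int_{\R^N} \abs{I_{\alpha/2}\star|u|^p}^2 dx, \qquad J(u) = \frac1q \int_{\R^N} |u|^q dx,
\]
the energy becomes $\Phi(u) = I(u) - \lambda J(u) - \int_{\R^N} G(|x|,u)\,dx$. A direct computation shows that $I(u_t)$ and $J(u_t)$ are both $t^{s}I(u)$, $t^{s}J(u)$ for a common exponent $s$. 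Since here $q<q_{\rm rad}$, $s$ is positive, so the scaling sends $u_t\to\infty$ in $E$ as $t\to\infty$. The $g$-term scales with a strictly smaller power because $q<q_3<q_{\rm rad}$; in the supercritical regime the roles of growth at infinity and at zero are exchanged relative to the first case, which is why $q_3>q$ is the subscaled side here. The term $h$ contributes at most $\|h\|_{L^r}\|u\|_{L^{r'}}$, with $r'\in[2^\ast,q_{\rm rad})$ by the hypothesis on $r$, and Theorem \ref{Theorem 1}(ii) together with the scaling makes this contribution lower order as well.

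Our tool is the abstract result on scaled operators proved earlier in the paper (Section 3): if $\lambda_k<\lambda<\lambda_{k+1}$ for the cohomological-index eigenvalues of Theorem \ref{Theorem 301}, the functional has a linking geometry built from scaled cones. The sets are the sublevel set $\Psi^{\lambda_k}$ and the superlevel set $\Psi_{\lambda_{k+1}}$ of $\Psi = I/J$ on the scaling-invariant manifold $\M$, and the linking comes from $i(\Psi^{\lambda_k}) = k = i(\M\setminus\Psi_{\lambda_{k+1}})$. If $\lambda<\lambda_1$, $\Phi$ is instead coercive and bounded below, and a minimizer gives the solution. Since $\lambda$ is not an eigenvalue, one of these cases always holds.

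The concrete steps are as follows.
\begin{enumerate}
\item Verify the growth hypotheses and the $C^1$ regularity of $\Phi$ from \eqref{121} and Theorem \ref{Theorem 1}.
\item Show that on the scaled cone over $\Psi^{\lambda_k}$ we have $\Phi(u_t)\le t^{s}(1-\lambda/\lambda_k)I(u)+o(t^{s})\to-\infty$ uniformly, using compactness of $\Psi^{\lambda_k}$ in the relevant sense.
\item Show that on the scaled cone over $\Psi_{\lambda_{k+1}}$, $\Phi$ is bounded below, since $(1-\lambda/\lambda_{k+1})I(u)$ dominates the lower-order terms.
\item Verify the Palais--Smale condition.
\end{enumerate}

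The main obstacle is step 4. Let $(u_j)$ be a (PS) sequence and suppose $\rho_j:=\|u_j\|\to\infty$. Rescale by $t_j$ chosen so that $\tilde u_j=(u_j)_{1/t_j}$ has $I(\tilde u_j)=1$. Dividing the derivative identity by $t_j^{s}$ and using that the $g$ and $h$ terms are of lower order after rescaling, $\tilde u_j$ becomes an asymptotic (PS) sequence for $I-\lambda J$. The compact embedding of Theorem \ref{Theorem 1}(ii) applies since $q\in(2^\ast,q_{\rm rad})$ lies strictly inside the embedding range. Uniform convexity of $E$ together with the $(S_+)$-type property of $I'$ then gives a strong limit $\tilde u\neq0$ solving \eqref{103}, contradicting the assumption that $\lambda$ is not an eigenvalue. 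Note that no condition $2p<2^\ast$ is required, because the single homogeneity $s$ makes $I$ and $J$ scale identically.

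The care needed is in showing that the lower-order terms vanish after division by $t_j^{s}$. I would first try bounding $\int_{\R^N}|G(|x|,u_j)|\,dx$ by $\int_{\R^N}|u_j|^{q_3}dx+\|h\|_{L^r}\|u_j\|_{L^{r'}}$ and using the interpolation estimates of \cite{MR3568051} in terms of the two parts of the norm. Once boundedness holds, compactness follows from the compact embeddings and $(S_+)$, and the appropriate minimax theorem yields the critical point.
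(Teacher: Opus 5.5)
Your proposal is correct and follows the paper's route. The paper checks that $\w{g}(u_t)v_t=o(t^s)\norm{v}$ as $t\to\infty$, which is Lemma \ref{Lemma1} \ref{Lemma1-3} and exactly your scaling count for the $|t|^{q_3-1}$ and $h$ terms; it then cites Theorem \ref{Theorem 304}, whose proof is the (PS)-by-rescaling argument and the scaled saddle-point linking over $\w{\Psi}^{\lambda_k}$ and $\w{\Psi}_{\lambda_{k+1}}$ that you write out, using the direct H\"older-plus-scaling bound on this derivative term rather than interpolation or compactness of $\w{\Psi}^{\lambda_k}$.
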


In particular, we have the following corollary.

\begin{corollary}
Let $p>(N+\alpha)/(N-2)$. Then either the equation
\[
- \Delta u + (I_\alpha \star |u|^p)\, |u|^{p-2}\, u = \lambda\, |u|^{q-2}\, u \quad \text{in } \R^N
\]
has a nontrivial solution, or the equation
\[
- \Delta u + (I_\alpha \star |u|^p)\, |u|^{p-2}\, u = \lambda\, |u|^{q-2}\, u + h(x) \quad \text{in } \R^N
\]
has a solution for all $h \in L^\infty(\R^N) \cap L^r(\R^N)$ with $r \in (q_{\textrm{rad}}/(q_{\textrm{rad}} - 1),2^\ast/(2^\ast - 1)]$.
\end{corollary}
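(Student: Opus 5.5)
The last statement is the Corollary in the case $p>(N+\alpha)/(N-2)$, asymptotically scaled: either the scaled eigenvalue equation has a nontrivial solution, or the equation with forcing $h$ is solvable for every admissible $h$. The plan is to deduce it directly from Theorem \ref{Theorem 14}, with $g(|x|,t) := h(x)$.

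Suppose the first alternative fails, i.e. the equation $\L(u)=\lambda|u|^{q-2}u$ has no nontrivial solution in $E=E^{\alpha,\,p}_{\text{rad}}(\R^N)$. By definition this means $\lambda$ is not an eigenvalue of problem \eqref{103}. Recall that eigenvalues are the values of $\lambda$ for which \eqref{103} admits a nontrivial weak solution in $E$. The sequence $\lambda_k$ constructed via the cohomological index consists of eigenvalues, but we only need the converse implication: no nontrivial solution means $\lambda$ is not an eigenvalue. I would check that the paper's definition of eigenvalue (Theorem \ref{Theorem 301} and the surrounding setup) is exactly this.

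Now fix $h\in L^\infty(\R^N)\cap L^r(\R^N)$ with $r\in(q_{\textrm{rad}}/(q_{\textrm{rad}}-1),2^\ast/(2^\ast-1)]$, and set $g(|x|,t)=h(x)$. Two points need checking.

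\begin{itemize}
\item \textbf{$g$ is admissible.} It is a Carath\'eodory function, since it is measurable in $x$ and constant in $t$. The one subtlety is that $g$ is written as a function of $|x|$, so $h$ should be radial. I would assume $h$ radial, as is implicit in working in $E$. Alternatively, one can note that testing only against radial functions yields, by symmetric criticality, a solution with $h$ replaced by its radial average; so radiality of $h$ is really needed, and I would state it.
\item \textbf{Growth condition \eqref{121}.} We have $|g(|x|,t)|=|h(x)|\le a_3|t|^{q_3-1}+h(x)$ for any $a_3>0$ and any $q_3\in(q,q_{\textrm{rad}})$, after replacing $h$ by $|h|$ on the right-hand side. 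This interval is nonempty because $q<q_{\textrm{rad}}$ in this regime, as noted after the scaling discussion.
\end{itemize}

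Theorem \ref{Theorem 14} then gives a solution of \eqref{120}, which is precisely $\L(u)=\lambda|u|^{q-2}u+h$. There is no real obstacle here. The only delicate points are the identification of "not an eigenvalue" with "no nontrivial solution" and the radial symmetry of $h$; both are matters of bookkeeping rather than genuine difficulty.
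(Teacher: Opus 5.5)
Your proposal is correct and matches the paper. The paper states this corollary as immediate from Theorem \ref{Theorem 14}, applied with $g(|x|,t)=h(x)$, using that the absence of a nontrivial solution in $E^{\alpha,\,p}_{\text{rad}}(\R^N)$ means exactly that $\lambda$ is not an eigenvalue of \eqref{103}. Your remark that $h$ must be radial for $g$ to have the form $g(|x|,t)$ is a correct caveat that the paper leaves implicit: otherwise the radial critical point only solves the equation with $h$ replaced by its radial average.
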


\section{A glance at critical point theorems by scaling} \label{Section 3}

This section is devoted to the relevant abstract tools in critical point theory which will be used to prove our main existence and multiplicity results. Some of the material has been developed recently in \cite{MePe2} with an approach based on the novel notion of {\it scaling in a Banach space} which is defined as follows.

\subsection{Scaling and scaled operators on Banach spaces}

\begin{definition}
    Let $W$ be a reflexive Banach space. A scaling on $W$ is a continuous mapping $W\times[0,\infty)\to W$, $(u,t)\mapsto u_t$ satisfying
    \begin{enumerate}[label=$(H_{\arabic*})$]
    \item $(u_{t_1})_{t_2}=u_{t_1t_2}$ with $t_1,t_2\geq0$ for all $u\in W$ and $t_1,t_2\geq0,$
    \item $(\tau u)_t=\tau u_t$, $\tau\in\R$ $t\geq0$ for all $u\in W$, $\tau\in\R$ and $t\geq0$,
    \item $u_0=0$ and $u_1=u$ for all $u\in W$,
    \item $u_t$ is bounded on bounded sets of $W\times[0,\infty)$.
    \item $\exists s>0$ such that $\|u_t\|=O(t^{s})$ as $t\to\infty$ uniformly on bounded sets.
\end{enumerate}
\end{definition}
 Denote by $W^*$ the dual of $W$. Recall that $q\in C(W,W^*)$ is a potential operator if there is a functional $Q\in C^1(W,\R)$, called a potential for $q$, such that $Q^\prime=q$. By replacing $Q$ with $Q-Q(0)$ if necessary, we may assume that $Q(0)=0$.

 \begin{definition}
     A scaled operator is an odd potential operator $q\in C(W,W^*)$ that maps bounded sets into bounded sets and satisfies
\[q(u_t)v_t=t^sq(u)v,\quad\forall u,v\in W, t\geq0.
     \]
     We denote by $\A_s$ the class of odd scaled potential operators.
     \end{definition}
Let us denote by $I_s$ the potential of the scaled operator $A_s$ with $I_s(0)=0$. Of course $I_s$ is even, bounded on bounded sets, and satisfies the scaling property
\[I_s(u_t)=t^sI_s(u),\quad\forall u\in W, t\geq0\]
(see \cite[Proposition 2.2]{MePe2}).

We will consider the question of existence and multiplicity of solutions to equations of the form
\[
A_s(u)=f(u)\quad\text{in } W^*,
\]
where $f\in C(W,W^*)$ is a potential operator, and $A_s$ is a scaled operator satisfying
\begin{enumerate}[label=$(H_{\arabic*})$, start=6]
\item $A_s(u)u>0$ for all $u\in W\backslash\set{0}$,
\item every sequence $(u_j)$ in $W$ such that $u_j\weak u$ and $A_s(u_j)(u_j-u)\to0$ has a subsequence that converges strongly to $u$.
\end{enumerate}
 Solutions of the above equation coincide with critical points of the $C^1$-functional
\[
\Phi(u)=I_s(u)-F(u),\quad u\in W,
\]
where $F$ is the potential of $f$ with $F(0)=0$.

\subsection{Scaled eigenvalue problems}
Now let us consider the eigenvalue problem
 \begin{equation}\label{3-1}
     A_s(u)=\lambda B_s(u)\quad\text{in } W^*,
 \end{equation}
where $A_s$ and $B_s$ are scaled operators satisfying $(H_6)$ and $(H_7)$ and
\begin{enumerate}[label=$(H_{\arabic*})$, start=8]
    \item $B_s(u)u>0$ for all $u\in W\backslash\set{0}$,
    \item if $u_j\weak u$ in $W$, then $B_s(u_j)\to B_s(u)$ in $W^*$,
\end{enumerate}
and $\lambda\in\R$. We say that $\lambda$ is an eigenvalue if there is a $u\in W\backslash\set{0}$, called an eigenfunction associated with $\lambda$, satisfying equation $\eqref{3-1}$. If that is the case, then $u_t$ is also an aigenfunction associated with $\lambda$ for any $t>0$ since
\[
A_s(u_t)v=A_s(u_t)(v_{1/t})_t=t^sA_s(u)v_{1/t}=t^s\lambda B_s(u)v_{1/t}=\lambda B_s(u_t)(v_{1/t})_t=\lambda B(u_t)v
\]
for all $v\in W$. We denote by $\sigma(A_s,B_s)$ the spectrum, i.e., the set of all eigenvalues, of the pair of scaled operators $(A_s,B_s)$. We have $\sigma(A_s,B_s)\subset (0,\infty)$ by $(H_6)$ and $(H_8)$.

Let us denote by $J_s$ the potential of $B_s$ with $J_s(0)=0$. We also assume that $I_s$ and $J_s$ satisfy
\begin{enumerate}[label=$(H_{\arabic*})$, start=10]
    \item $I_s$ is coercive, i.e., $I_s(u)\to\infty$ as $\|u\|\to\infty$,
    \item the equation $I_s(tu)=1$ has a unique positive solution $t$ for each $u\in W\backslash\set{0}$,
    \item every solution of $\eqref{3-1}$ satisfies
    \[
    I_s(u)=\lambda J_s(u).
    \]
\end{enumerate}

The eigenvalue problem $\eqref{3-1}$ has the following variational formulation. Let
\[
\Psi(u)=\frac{1}{J_s(u)},\quad u\in W\backslash\set{0},
\]
let
\[
\M_s=\set{u\in W:I_s(u)=1},
\]
and let $\w{\Psi}=\Psi|_{\M_s}$. Then $\M_s$ is a complete, symmetric, and bounded $C^1$-Finsler manifold, and eigenvalues of problem $\eqref{3-1}$ coincide with critical values of $\w{\Psi}$ (see \cite[Proposition 2.5]{MePe2}).

\subsubsection{Minimax eigenvalues}
\begin{definition}[Fadell and Rabinowitz \cite{MR0478189}] \label{Definition 301}
Let $\A$ denote the class of symmetric subsets of $W \setminus \set{0}$. For $A \in \A$, let $\overline{A} = A/\Z_2$ be the quotient space of $A$ with each $u$ and $-u$ identified, let $f : \overline{A} \to \RP^\infty$ be the classifying map of $\overline{A}$, and let $f^\ast : H^\ast(\RP^\infty) \to H^\ast(\overline{A})$ be the induced homomorphism of the Alexander-Spanier cohomology rings. The cohomological index of $A$ is defined by
\[
i(A) = \begin{cases}
0 & \text{if } A = \emptyset\\[5pt]
\sup \set{m \ge 1 : f^\ast(\omega^{m-1}) \ne 0} & \text{if } A \ne \emptyset,
\end{cases}
\]
where $\omega \in H^1(\RP^\infty)$ is the generator of the polynomial ring $H^\ast(\RP^\infty) = \Z_2[\omega]$.
\end{definition}

\begin{example}
The classifying map of the unit sphere $S^N$ in $\R^{N+1},\, N \ge 0$ is the inclusion $\RP^N \incl \RP^\infty$, which induces isomorphisms on the cohomology groups $H^l$ for $l \le N$, so $i(S^N) = N + 1$.
\end{example}

The following proposition summarizes the basic properties of the cohomological index.

\begin{proposition}[Fadell and Rabinowitz \cite{MR0478189}] \label{Proposition 300}
The index $i : \A \to \N \cup \set{0,\infty}$ has the following properties:
\begin{enumerate}
\item[$(i_1)$]Definiteness: $i(A) = 0$ if and only if $A = \emptyset$.
\item[$(i_2)$] Monotonicity: If there is an odd continuous map from $A$ to $B$ (in particular, if $A \subset B$), then $i(A) \le i(B)$. Thus, equality holds when the map is an odd homeomorphism.
\item[$(i_3)$] Dimension: $i(A) \le \dim W$.
\item[$(i_4)$] Continuity: If $A$ is closed, then there is a closed neighborhood $N \in \A$ of $A$ such that $i(N) = i(A)$. When $A$ is compact, $N$ may be chosen to be a $\delta$-neighborhood $N_\delta(A) = \set{u \in W : \dist{u}{A} \le \delta}$.
\item[$(i_5)$] Subadditivity: If $A$ and $B$ are closed, then $i(A \cup B) \le i(A) + i(B)$.
\item[$(i_6)$] Stability: If $\Sigma A$ is the suspension of $A \ne \emptyset$, obtained as the quotient space of $A \times [-1,1]$ with $A \times \set{1}$ and $A \times \set{-1}$ collapsed to different points, then $i(\Sigma A) = i(A) + 1$.
\item[$(i_7)$] Piercing property: If $C$, $C_0$, and $C_1$ are closed and $\varphi : C \times [0,1] \to C_0 \cup C_1$ is a continuous map such that $\varphi(-u,t) = - \varphi(u,t)$ for all $(u,t) \in C \times [0,1]$, $\varphi(C \times [0,1])$ is closed, $\varphi(C \times \set{0}) \subset C_0$, and $\varphi(C \times \set{1}) \subset C_1$, then $i(\varphi(C \times [0,1]) \cap C_0 \cap C_1) \ge i(C)$.
\item[$(i_8)$] Neighborhood of zero: If $U$ is a bounded closed symmetric neighborhood of $0$, then $i(\bdry{U}) = \dim W$.
\end{enumerate}
\end{proposition}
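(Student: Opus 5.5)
The statement to be proved is Proposition \ref{Proposition 300}, the list of basic properties of the Fadell--Rabinowitz cohomological index. All of these are classical, and I would obtain them from the definition via the classifying map $f:\overline{A}\to\RP^\infty$ and the naturality of Alexander--Spanier cohomology. The key observation is that $i(A)\ge m$ if and only if $f^\ast(\omega^{m-1})\neq 0$.

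\emph{Definiteness, monotonicity, dimension.}
\begin{itemize}
\item $(i_1)$: $H^0(\overline{A})\neq 0$ whenever $A\neq\emptyset$, and $f^\ast(1)=1$.
\item $(i_2)$: an odd map $h:A\to B$ induces $\overline{h}:\overline{A}\to\overline{B}$. By uniqueness of classifying maps up to homotopy, $f_B\circ\overline{h}\simeq f_A$. Hence $f_A^\ast(\omega^{m-1})=\overline{h}^\ast f_B^\ast(\omega^{m-1})$, so $f_A^\ast(\omega^{m-1})\neq 0$ forces $f_B^\ast(\omega^{m-1})\neq 0$.
\item $(i_3)$: in finite dimension $d$, radial projection gives an odd map $A\to S^{d-1}$. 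Then $i(A)\le i(S^{d-1})=d$ by the Example, and there is nothing to prove if $\dim W=\infty$.
\end{itemize}

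\emph{Continuity, subadditivity, stability.}
\begin{itemize}
\item $(i_4)$: I would use the tautness (continuity) property of Alexander--Spanier cohomology. For closed $A$ in a paracompact space, $H^\ast(\overline{A})=\varinjlim H^\ast(\overline{N})$ over closed symmetric neighborhoods $N$. Since $f_N^\ast(\omega^{m})$ restricts to $f_A^\ast(\omega^m)=0$ for $m=i(A)$, it already vanishes on some $N$. For compact $A$ the $\delta$-neighborhoods are cofinal.
\item $(i_5)$: by $(i_4)$, pass to open neighborhoods. If $f^\ast(\omega^{a})$ vanishes on $\overline{A}$ and $f^\ast(\omega^{b})$ vanishes on $\overline{B}$, these classes lift to relative classes in $H^\ast(\overline{A\cup B},\overline{A})$ and $H^\ast(\overline{A\cup B},\overline{B})$. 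Their cup product lies in $H^\ast(\overline{A\cup B},\overline{A}\cup\overline{B})=0$, so $f^\ast(\omega^{a+b})=0$.
\item $(i_6)$: the upper bound $i(\Sigma A)\le i(A)+1$ follows from $(i_5)$, covering $\Sigma A$ by two closed pieces: a neighborhood of the cone points (index $1$) and a piece deformable onto $A$. For the lower bound, I would use the Mayer--Vietoris/Gysin sequence of the double cover to show that multiplication by $\omega$ raises the nonvanishing degree.
\end{itemize}

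\emph{Piercing and neighborhood of zero.}
\begin{itemize}
\item $(i_7)$: the main obstacle, so I would try this argument first. Let $X=\varphi(C\times[0,1])$ and $D=X\cap C_0\cap C_1$. Suppose $i(D)<i(C)$. By $(i_4)$, choose a closed symmetric neighborhood $U$ of $D$ with $i(U)=i(D)$. The sets $X\cap C_0\setminus \operatorname{int}U$ and $X\cap C_1\setminus\operatorname{int}U$ are disjoint closed sets, so an odd Urysohn-type function separates them. Combining this function with $\varphi$ yields an odd map from $C$ into $U$, up to suspension bookkeeping, and this contradicts monotonicity.
\item $(i_8)$: $\partial U$ is homeomorphic by an odd map to the unit sphere via radial projection (for star-shaped $U$). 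In general, I would use odd maps in both directions through the sphere and a Borsuk--Ulam-type argument, giving $i=\dim W$.
\end{itemize}

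Since this proposition is quoted from \cite{MR0478189}, in the paper I would simply cite it; the above is the route to reconstruct it.
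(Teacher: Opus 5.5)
The paper gives no proof of this proposition; it only cites Fadell--Rabinowitz, so your closing remark matches what the paper does. Your sketches of $(i_1)$--$(i_5)$ are the standard arguments (naturality of classifying maps, tautness of Alexander--Spanier cohomology, relative cup products).

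\textbf{The gap is in $(i_7)$, and your route to $(i_8)$ fails for the same reason.} The step ``combining this function with $\varphi$ yields an odd map from $C$ into $U$'' never uses the hypothesis $i(D)<i(C)$. So it would have to produce an odd map $C\to U$ in every instance of $(i_7)$, for every sufficiently thin closed symmetric neighbourhood $U$ of $D$ (all of these satisfy $i(U)=i(D)$ by $(i_2)$ and $(i_4)$). Such maps need not exist.

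Concretely, take $W=\R^3$, $C=S^2$ and $\varphi(u,t)=(1+t)u$. Let $D$ be the symmetric genus-two surface obtained from $\set{|x|=3/2}$ by attaching two antipodal tubes inside $\set{3/2<|x|<2}$. Let $C_0,C_1$ be the closures of the two components of $\set{1\le|x|\le2}\setminus D$. All hypotheses of $(i_7)$ hold, and $\varphi(C\times[0,1])\cap C_0\cap C_1=D$.

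A thin symmetric tubular neighbourhood $U$ of $D$ retracts oddly onto $D$. So an odd map $S^2\to U$ would give an odd map $S^2\to D$, hence a map $\RP^2\to D/\Z_2$ pulling the covering class back to the generator of $H^1(\RP^2;\Z_2)$. But $D/\Z_2$ is a closed surface of Euler characteristic $-1$, so its fundamental group is torsion free. Hence every map $\RP^2\to D/\Z_2$ is zero on $H^1(\,\cdot\,;\Z_2)=\operatorname{Hom}(\pi_1,\Z_2)$, a contradiction.

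The same example, with $U$ the ball $\set{|x|\le 3/2}$ plus two antipodal solid handles, shows that in $(i_8)$ there is in general no odd map $S^{d-1}\to\bdry{U}$. So ``odd maps in both directions through the sphere'' cannot give the lower bound. Also, the separating function must be even, not odd: an odd function cannot be identically $1$ on a symmetric set.

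\textbf{What is missing is a cohomological separation argument.} Set $Y_j=\varphi^{-1}(C_j)$. These are closed subsets of $C\times[0,1]$, invariant under $(u,t)\mapsto(-u,t)$, they cover it, and $C\times\set{j}\subset Y_j$. Since $\varphi$ maps $Y_0\cap Y_1$ oddly into $D$, by $(i_2)$ it suffices to show $i(Y_0\cap Y_1)\ge m:=i(C)$.

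Suppose $\omega^{m-1}$ vanished on the quotient of $Y_0\cap Y_1$. By tautness it would vanish on the quotient of some open symmetric neighbourhood $V$ of $Y_0\cap Y_1$, so it lifts to $H^{m-1}(\overline{C}\times[0,1],V/\Z_2)$. Take an even Urysohn function equal to $0$ on $(Y_0\setminus V)\cup C\times\set{0}$ and to $1$ on $(Y_1\setminus V)\cup C\times\set{1}$; these sets are disjoint precisely because $Y_0\cap Y_1\subset V$. It pulls the generator of $H^1([0,1],\set{0,1})$ back to a class that vanishes off $V/\Z_2$ and maps to the suspension class $\theta\in H^1(\overline{C}\times[0,1],\overline{C}\times\set{0,1})$. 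The relative cup product of the two lifts lies in $H^m(\overline{C}\times[0,1],\overline{C}\times[0,1])=0$. Yet its image $\theta\cup\omega^{m-1}$ is the suspension of $\omega^{m-1}\ne0$, a contradiction.

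Then $(i_8)$ follows from $(i_7)$ and $(i_3)$. Take $C$ the unit sphere of a $d$-dimensional subspace, $\varphi(u,t)=((1-t)\varepsilon+tR)u$, $C_0=U$ and $C_1=W\setminus\operatorname{int}U$, so that $C_0\cap C_1=\bdry{U}$; then let $d\to\dim W$.

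Finally, your lower bound in $(i_6)$ is only gestured at. It becomes precise once you note that $\overline{\Sigma A}$ is the Thom space of the line bundle $A\times_{\Z_2}\R\to\overline{A}$, whose mod $2$ Thom class is $\omega_{\Sigma A}$. Then $\omega_{\Sigma A}^k$ is the Thom-isomorphism image of $\omega_A^{k-1}$.
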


Let $\F$ denote the class of symmetric subsets of $\M_s$. For $k\geq1$, let
\[
\F_k=\set{M\in\F:i(M)\geq k}
\]
and set
\[
\lambda_k:=\inf_{M\in\F_k}\sup_{u\in M}\w{\Psi}(u).
\]
We have the following theorem (see Perera et al. \cite[ Proposition 3.52 and Proposition 3.53]{MR2640827})
\begin{theorem}\label{Theorem 301}
    Assume $(H_1)-(H_{12})$. Then $\lambda_k\nearrow \infty$ is a sequence of eigenvalues of $\eqref{3-1}$.
    \begin{enumroman}
        \item\label{301-1} The first eigenvalue is given by
        \[
        \lambda_1=\min_{u\in\M_s}\w{\Psi}(u)>0.
        \]
        \item\label{301-2} If $\lambda_k=\dotsb\lambda_{k+m-1}=\lambda$, and $E_\lambda$ is the set of eigenfunctions associated with $\lambda$ that lie on $\M$, then
        \[i(E_\lambda)\geq m.\]
        \item\label{301-3} If $\lambda_k<\lambda<\lambda_{k+1}$, then
        \[i(\w{\Psi}^{\lambda_k})=i(\M_s\backslash\w{\Psi}_\lambda)=i(\w{\Psi}^\lambda)=i(\M_s\backslash\w{\Psi}_{\lambda_{k+1}})=k,\]
        where $\w{\Psi}^a=\set{u\in\M_s:\w{\Psi}(u)\leq a}$ and $\w{\Psi}_a=\set{u\in\M_s:\w{\Psi}(u)\geq a}$ for $a\in\R$.
    \end{enumroman}
\end{theorem}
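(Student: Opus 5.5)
The plan is to reduce Theorem \ref{Theorem 301} to the abstract minimax theory for even $C^1$ functionals on a symmetric complete Finsler manifold, as developed in Perera et al.\ \cite{MR2640827}. The first task is to put the problem into that framework. By $(H_{10})$, $(H_{11})$ and the scaling property $I_s(u_t)=t^sI_s(u)$, the set $\M_s$ is a bounded, symmetric, complete $C^1$-Finsler manifold. Moreover, $\w{\Psi}=1/J_s$ is well defined and positive on $\M_s$: $J_s(u)>0$ for $u\neq0$ because $J_s(u)=\int_0^1B_s(\tau u)u\,d\tau$, and $(H_8)$ gives $B_s(\tau u)\tau u>0$. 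Critical values of $\w{\Psi}$ are exactly the eigenvalues, by \cite[Proposition 2.5]{MePe2}.

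The central step is to verify that $\w{\Psi}$ satisfies the Palais--Smale condition on $\M_s$. Take a sequence $(u_j)\subset\M_s$ with $\w{\Psi}(u_j)\to c$ and $\w{\Psi}'(u_j)\to0$. Boundedness of $\M_s$ and reflexivity of $W$ give a weakly convergent subsequence $u_j\weak u$. By $(H_9)$, $B_s(u_j)\to B_s(u)$ in $W^*$, and hence $J_s(u_j)\to J_s(u)$. This limit is $1/c>0$, so $u\neq0$.

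Next, write the tangential derivative using a Lagrange multiplier:
\[
\Psi'(u_j)-\mu_j I_s'(u_j)\to0,
\]
where $\mu_j$ is determined by testing on $u_j$. One shows that $A_s(u_j)u_j$ stays bounded away from zero and infinity on $\M_s$, using the scaling identity $\frac{d}{dt}I_s(u_t)|_{t=1}=s$. This gives
\[
A_s(u_j)-\lambda_j' B_s(u_j)\to0 \quad \text{in } W^*
\]
for bounded scalars $\lambda_j'$. Testing this with $u_j-u$, and using the strong convergence of $B_s(u_j)$, yields $A_s(u_j)(u_j-u)\to0$. Then $(H_7)$ gives $u_j\to u$ strongly along a subsequence.

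I expect the main obstacle to be this handling of the multiplier and of the non-homogeneous constraint. The fix is to work with the scaling-derived tangent direction rather than with radial dilations $tu$, since $I_s$ is not homogeneous in $\tau$.

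With (PS) in hand, the standard results apply. An equivariant deformation lemma on $\M_s$ plus the monotonicity and continuity properties $(i_2)$ and $(i_4)$ of the index show that each finite $\lambda_k$ is a critical value. The sets $\F_k$ are nonempty for every $k$, since $\M_s$ contains radial projections of spheres in finite-dimensional subspaces and $\dim W=\infty$.

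The statements then follow as below.
\begin{enumroman}
\item The first eigenvalue $\lambda_1=\min\w{\Psi}$ is attained by weak lower semicontinuity. It is positive because $J_s$ is bounded on the bounded set $\M_s$.
\item The multiplicity estimate $i(E_\lambda)\ge m$ follows from the usual argument. If $i(E_\lambda)<m$, take a neighborhood $N$ with $i(N)=i(E_\lambda)$ by $(i_4)$. Deform $\w{\Psi}^{\lambda+\varepsilon}\setminus N$ into $\w{\Psi}^{\lambda-\varepsilon}$, and use subadditivity $(i_5)$ to contradict the definition of $\lambda_{k+m-1}$.
\item The index identities follow by the same deformation arguments together with the definitions of $\lambda_k$ and $\lambda_{k+1}$, as in \cite[Propositions 3.52, 3.53]{MR2640827}.
\end{enumroman}

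Finally, $\lambda_k\to\infty$. If instead $\lambda_k\nearrow\lambda<\infty$, then the eigenfunction set $E_\lambda$ would have infinite index. But $E_\lambda$ is compact by (PS), so it has finite index by $(i_4)$ combined with $(i_3)$ applied in a finite-dimensional neighborhood. This is a contradiction.
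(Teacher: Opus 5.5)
Your route is the one the paper takes. The paper proves Theorem~\ref{Theorem 301} only by citation: Mercuri--Perera (Proposition 2.5) make $\M_s$ a complete symmetric Finsler manifold whose critical values of $\w{\Psi}$ are the eigenvalues, and Perera et al.\ (Propositions 3.52--3.53) then give the minimax conclusions. The key hypothesis there is (PS) for $\w{\Psi}$. Your Lagrange-multiplier argument using $(H_9)$ and $(H_7)$ supplies that check and is correct in substance. However, three steps are justified incorrectly as written; each is easily repaired.

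\textbf{The multiplier bound.} What you actually need is $|\mu_j|\ge\delta>0$, so that $\lambda_j'=-1/(\mu_jJ_s(u_j)^2)$ stays bounded. This follows from an \emph{upper} bound on $A_s(u_j)u_j$, which is automatic because $A_s$ is bounded on the bounded set $\M_s$. Indeed, testing $\Psi'(u_j)-\mu_jA_s(u_j)\to0$ on $u_j$ gives $\mu_jA_s(u_j)u_j\to-c^2B_s(u)u<0$. The ``scaling-derived tangent direction'' is neither needed nor available. No differentiability of $t\mapsto u_t$ is assumed, and concretely $\partial_t u_t|_{t=1}=\sigma u+x\cdot\nabla u$ need not lie in $W$. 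So $\frac{d}{dt}I_s(u_t)|_{t=1}=s$ cannot be converted into information about $A_s(u)$ via the chain rule.

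\textbf{Finiteness of the index of compact sets.} A compact symmetric $K\subset W\setminus\{0\}$ does not have finite index by $(i_3)$ applied to a neighbourhood. A $\delta$-neighbourhood is not finite-dimensional, and $(i_3)$ only gives $i\le\dim W=\infty$. The correct reason is that a finite cover of $K$ by small balls and an odd partition of unity produce an odd continuous map $K\to\R^n\setminus\{0\}$, so $i(K)\le n$ by $(i_2)$.

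Also, if $\lambda_k<\lambda$ for every $k$, part (ii) does not apply. You must argue directly. Take a closed neighbourhood $N$ of $E_\lambda$ with finite index, and deform $\w{\Psi}^{\lambda+\varepsilon}\setminus N$ into $\w{\Psi}^{\lambda-\varepsilon}$. The latter has index at most $k-1$ once $\lambda_k>\lambda-\varepsilon$. This gives $i(\w{\Psi}^{\lambda+\varepsilon})<\infty$, contradicting the fact that $\lambda_j<\lambda+\varepsilon$ for all $j$.

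\textbf{Attainment of $\lambda_1$.} $\M_s$ is not weakly closed, and $I_s$ is not assumed weakly lower semicontinuous, so ``weak lower semicontinuity'' does not suffice abstractly. There are two valid routes:
\begin{enumerate}
\item Use (PS): an even functional that is bounded below and satisfies (PS) on a complete Finsler manifold attains its infimum (via Ekeland). This is already part of the quoted minimax result.
\item In the concrete space, use weak lower semicontinuity of $I_s$ to get $I_s(u)\le1$ for the weak limit $u$. Then rescale to $u_t\in\M_s$ with $t=I_s(u)^{-1/s}\ge1$; since $J_s(u_t)=t^sJ_s(u)$, this can only increase $J_s$.
\end{enumerate}
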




\subsection{Subscaled equations}

Consider the equation
\begin{equation}\label{3-2}
    A_s(u)=f(u)
\end{equation}
in $W^*$, where $A_s\in\A_s$ satisfies $(H_6)$, $(H_7)$, and $(H_{10})$, and $f\in C(W,W^*)$ is a compact potential operator satisfying
\begin{equation}\label{3-3}
    f(u_t)v_t=o(t^s)\|v\|\quad\text{as }t\to\infty
\end{equation}
uniformly in $u$ on bounded sets for all $v\in W.$ This equation is referred as subscaled. The variational functional associated with equation $\eqref{3-2}$ is
\[\Phi(u)=I_s(u)-F(u),\quad u\in W,
\]
where
\[I_s(u)=\int_0^1A_s(\tau u)ud\tau,\quad F(u)=\int_0^1 f(\tau u)ud\tau
\]
are the potentials of $A_s$ and $f$ respectively, with $I_s(0)=0=F(0)$. We have the following compactness fact, \cite[Proposition 2.3]{MePe2}.

\begin{proposition}\label{PS compact}
    If $f$ is a compact operator, then every bounded sequence $(u_j)$ in $W$ such that $\Phi^\prime(u_j)\to0$ has a convergent subsequence.
\end{proposition}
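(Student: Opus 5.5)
Since $(u_j)$ is bounded and $W$ is reflexive, we first pass to a subsequence with $u_j \weak u$ in $W$ for some $u \in W$. Because $f$ is a compact operator, we may also assume, after a further subsequence, that $f(u_j) \to w$ strongly in $W^*$ for some $w \in W^*$. The plan is to use the structural hypothesis $(H_7)$ on the scaled operator $A_s$. To apply it we must show that $A_s(u_j)(u_j - u) \to 0$.

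For this we write
\[
A_s(u_j)(u_j-u) = \Phi'(u_j)(u_j-u) + f(u_j)(u_j-u),
\]
which uses $\Phi' = A_s - f$ (recall $I_s' = A_s$ and $F' = f$). We then treat the two terms separately.

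\begin{enumerate}
\item For the first term, $(u_j - u)$ is bounded in $W$ and $\Phi'(u_j) \to 0$ in $W^*$. Hence
\[
\abs{\Phi'(u_j)(u_j-u)} \le \norm{\Phi'(u_j)}\, \norm{u_j-u} \to 0 .
\]
\item For the second term, split
\[
f(u_j)(u_j-u) = \big(f(u_j)-w\big)(u_j-u) + w(u_j-u).
\]
The first piece tends to $0$ because $f(u_j) \to w$ strongly in $W^*$ and $(u_j - u)$ is bounded. The second piece tends to $0$ because $u_j - u \weak 0$ and $w \in W^*$ is a fixed functional.
\end{enumerate}

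Thus $A_s(u_j)(u_j-u) \to 0$, and $(H_7)$ yields a subsequence converging strongly to $u$. This completes the argument.

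The only point requiring care is the meaning of ``compact operator''. We take it to mean that $f$ maps bounded sets into relatively compact subsets of $W^*$, which is what provides the strongly convergent subsequence of $f(u_j)$. If compactness is instead understood as weak-to-strong continuity, the same argument applies directly with $w = f(u)$. No step here is a serious obstacle.
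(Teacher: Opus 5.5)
Your proof is correct and follows essentially the same route as the argument the paper relies on (it cites \cite[Proposition 2.3]{MePe2} rather than proving it here). That argument takes a weak limit by reflexivity, uses $\Phi'(u_j)\to0$ and the compactness of $f$ to get $A_s(u_j)(u_j-u)\to0$, and concludes by $(H_7)$; your reading of ``compact operator'' as mapping bounded sets to precompact sets is the one the paper uses in Remark~\ref{remark1}.
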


\begin{remark}\label{remark1}
    Since the compact operator $f$ maps bounded sets into precompact, and hence bounded, sets, $F$ is bounded on bounded sets. Since $I_s$ is also bounded on bounded sets, then so is $\Phi$. It is not hard to see that $\Phi$ is coercive (see \cite{MePe2}, Lemma 2.15). It follows that $\Phi$ satisfies the $(PS)$ condition By Proposition \ref{PS compact} and therefore the number $\min\Phi(W)$ is a critical value for $\Phi$
\end{remark}
 In summary we have (\cite[Theorem 2.16]{MePe2}).

\begin{theorem}\label{Theorem 302}
    Assume $(H_1)$-$(H_7)$, $(H_{10})$, and $\eqref{3-3}$. Then equation $\eqref{3-2}$ has a solution.
\end{theorem}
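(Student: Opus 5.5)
We argue directly: show that $\Phi = I_s - F$ is coercive and weakly lower semicontinuous enough to attain its infimum, then check that the minimizer is a critical point. The bookkeeping uses Proposition \ref{PS compact} and the scaling hypotheses, as summarized in Remark \ref{remark1}.

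\emph{Step 1: coercivity of $\Phi$.} Fix $u \in W$ with $\|u\|$ large. By $(H_{10})$ and the continuity of the scaling, write $u = w_t$ with $w$ in a bounded set. The natural normalization is $w \in \M_s$, i.e.\ $I_s(w)=1$, and $t$ large; by $(H_{11})$ and the scaling identity $I_s(w_t)=t^s I_s(w)$ we may choose $t$ with $I_s(u)=t^s$. Coercivity of $I_s$ gives that $\M_s$ is bounded, and $(H_4)$–$(H_5)$ show that $t\to\infty$ as $\|u\|\to\infty$.

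Next compute
\[
F(w_t) = \int_0^t \frac{d}{d\tau}F(w_\tau)\,d\tau = \int_0^t f(w_\tau)\,\tfrac{d}{d\tau}w_\tau\, d\tau .
\]
This is not obviously legitimate without differentiability of the scaling. The safer route, which is the key estimate, is to use $F(u)=\int_0^1 f(\tau u)u\,d\tau$ together with $(H_2)$: $\tau u = (\tau w)_t$. Then \eqref{3-3}, applied with $v = w$ and with $\tau w$ ranging in a bounded set, gives
\[
|F(u)| \le \int_0^1 |f((\tau w)_t)w_t|\,d\tau = o(t^s)
\]
uniformly in $w\in\M_s$. This needs \eqref{3-3} to hold uniformly in $v$ on bounded sets as well. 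It does, because the $\|v\|$ factor is explicit and $w$ ranges over the bounded set $\M_s$. Hence $\Phi(u) \ge t^s - o(t^s) \to \infty$.

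This uniformity is the step I expect to be the main obstacle. Concretely, one must pass from \eqref{3-3} to an estimate of $F(w_t)$ uniform over the bounded set $\M_s$ of base points, and must relate $\|u\|\to\infty$ to $t\to\infty$ through $(H_5)$ and the boundedness of $\M_s$. This is exactly \cite[Lemma 2.15]{MePe2}, and I would reproduce its argument.

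\emph{Step 2: boundedness from below and the (PS) condition.} The operator $f$ is compact and $I_s$ is bounded on bounded sets, so $\Phi$ is bounded on bounded sets. Combined with coercivity, this gives $\inf\Phi>-\infty$, and every Palais–Smale sequence is bounded. By Proposition \ref{PS compact}, which uses $(H_7)$ and the compactness of $f$, every such sequence has a convergent subsequence. So $\Phi$ satisfies (PS).

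\emph{Step 3: conclusion.} The functional $\Phi$ is $C^1$, bounded below, and satisfies (PS). By Ekeland's variational principle there is a minimizing sequence with $\Phi'(u_j)\to0$. A subsequence converges to some $u$, and by continuity $\Phi(u)=\min\Phi(W)$ and $\Phi'(u)=0$, i.e.\ $A_s(u)=f(u)$ in $W^*$. Thus $u$ is a solution of \eqref{3-2}, possibly trivial, which proves Theorem \ref{Theorem 302}.
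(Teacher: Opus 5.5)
Your proposal is correct and follows the paper's route (Remark \ref{remark1}). That route is coercivity of $\Phi$ via the decomposition $u=w_t$ with $w\in\M_s$ as in \cite[Lemma 2.15]{MePe2}, then boundedness of $\Phi$ on bounded sets, then (PS) from Proposition \ref{PS compact}, and finally attainment of $\min\Phi(W)$ as a critical value. One small correction: $(H_{11})$ is not among the hypotheses and is not needed, since $t=I_s(u)^{1/s}$ (positive by $(H_6)$) and $w=u_{1/t}$ already give $I_s(w)=1$ and $w_t=u$ from the scaling identity together with $(H_1)$ and $(H_3)$.
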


Now we assume that
\begin{equation}\label{3-4}
    f(u)=\lambda B_s(u)+g(u),
\end{equation}
where $B_s\in\A_s$ satisfies $(H_8)$, $(H_9)$ and $(H_{12})$, $g\in C(W,W^*)$ is a compact potential operator satisfying
\begin{equation}\label{3-5}
    g(u_t)v_t=o(t^s)\|v\|\quad\text{as }t\to0
\end{equation}
uniformly in $u$ on bounded sets for all $v\in W$, and $\lambda\in\R$. It is not hard to see that in this case $u=0$ is a solution of equation $\eqref{3-4}$. Regarding nontrivial solutions to $\eqref{3-4}$ we have (\cite[Theorem 2.17]{MePe2})
\begin{theorem}\label{Theorem 303}
    Assume $(H_1)-(H_{12})$, \eqref{3-3}, \eqref{3-4}, and \eqref{3-5}, and let $\lambda\in\R\backslash\sigma(A_s,B_s)$.
    \begin{enumroman}
        \item If $\lambda>\lambda_1$, then equation \eqref{3-2} has a nontrivial solution.
        \item If $\lambda>\lambda_2$, then equation \eqref{3-2} has two nontrivial solutions.
    \end{enumroman}
\end{theorem}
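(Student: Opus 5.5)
The plan is to use the coercivity of $\Phi$ together with a comparison of critical groups at the origin and at the global minimizer; this is the scaling analogue of the classical local linking argument. By Remark~\ref{remark1} and Theorem~\ref{Theorem 302}, $\Phi = I_s - F$ is coercive, bounded on bounded sets and satisfies (PS). Hence $\Phi$ attains its infimum at some $u_0$, and $C_l(\Phi,u_0)=\delta_{l0}\Z_2$.

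\textbf{Part (i).} It suffices to show that $\inf \Phi<0=\Phi(0)$, since then $u_0\ne 0$. Take $u\in\M_s$ with $\w\Psi(u)=\lambda_1<\lambda$, so that $J_s(u)=1/\lambda_1$. Using the scaling, I will compute
\[
\Phi(u_t)=t^s\big(1-\lambda J_s(u)\big)-G(u_t),
\]
where $G$ is the potential of $g$. Assumption \eqref{3-5} gives $G(u_t)=\int_0^1 g(\tau u_t)u_t\,d\tau = o(t^s)$ as $t\to0$, by writing $\tau u_t=(\tau u)_t$ and using uniformity on bounded sets. Since $1-\lambda/\lambda_1<0$, we get $\Phi(u_t)<0$ for small $t>0$. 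So $u_0$ is a nontrivial solution.

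\textbf{Part (ii).} Here I would compute the critical groups of $\Phi$ at $0$ when $\lambda_k<\lambda<\lambda_{k+1}$ with $k\ge2$ (or $k\ge1$ in general), aiming for $C_k(\Phi,0)\ne0$. The approach is a scaling-based local linking near $0$. Consider the sets
\[
A_0=\{u_t: u\in\w\Psi^{\lambda_k},\ 0\le t\le \rho\},\qquad B_0=\{u_t: u\in \w\Psi_{\lambda_{k+1}},\ 0\le t\le\rho\}.
\]
Every $v\ne0$ equals $u_t$ for a unique $u\in\M_s$ and $t>0$. This uses $(H_{11})$ together with $I_s(v_{1/t})=t^{-s}I_s(v)$ and a monotonicity in $t$ of $I_s(v_t)=t^sI_s(v)$. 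Through this correspondence, $W\setminus\{0\}$ is homeomorphic to $\M_s\times(0,\infty)$. By the computation above, for small $\rho$ we have $\Phi<0$ on $A_0\setminus\{0\}$ and $\Phi\ge c\,t^s>0$ on $B_0\setminus\{0\}$.

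Theorem~\ref{Theorem 301}\ref{301-3} gives $i(\w\Psi^{\lambda_k})=i(\M_s\setminus\w\Psi_{\lambda_{k+1}})=k$. With the index-based local linking criterion (as in Perera--Agarwal--O'Regan, adapted to scaling in \cite{MePe2}), this yields $C_k(\Phi,0)\ne0$. The argument deforms $\Phi^0\cap B_\rho\setminus\{0\}$ onto something of index $k$ that cannot be contracted into a set of index $<k$, using the piercing property $(i_7)$ and the continuity of the scaling. This is the main obstacle: the construction of the deformation along the scaling curves $t\mapsto u_t$ instead of linear rays, and showing that the index gap forces nontrivial homology in degree $k$.

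Given this, with $k\ge2$ since $\lambda>\lambda_2$ is not an eigenvalue, $C_k(\Phi,0)\ne0$ while $C_k(\Phi,u_0)=0$. If $0$ and $u_0$ were the only critical points, the Morse relations would give a contradiction. Indeed, $\Phi$ is bounded below with (PS), so $H_*(W,\Phi^{a})=0$ for $a<\inf\Phi$. The Morse inequalities with only the critical groups $\delta_{l0}$ at $u_0$ and a nonzero group in degree $k\ge 1$ at $0$ violate the Euler characteristic relation $\sum(-1)^l\operatorname{rank}C_l=1$ unless further critical points exist. A cleaner route is the standard "three critical points" theorem: a functional bounded below with (PS) and nontrivial $C_k(\Phi,0)$, $k\ge1$, with $0$ not a minimizer, has two nontrivial critical points. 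Hence there is a second nontrivial solution $u_1\ne u_0$.
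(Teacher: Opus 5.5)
Your proof is correct. Part (ii) follows the paper's route. The paper only cites \cite[Theorem 2.17]{MePe2} for this statement, but its proof of Theorem~\ref{Theorem 4} has the same structure: scaled local linking with $\w{\Psi}^{\lambda_k}$ and $\w{\Psi}_{\lambda_{k+1}}$, then Theorem~\ref{Theorem 305} to get $C^k(\Phi,0)\neq 0$, then Proposition~\ref{Proposition 4-1}. That proposition produces a critical point $u_1$ with $C^{k-1}$ or $C^{k+1}$ nontrivial, so $u_1$ differs from the minimizer $u_0$ once $k\ge 2$. The step you call the main obstacle is exactly Theorem~\ref{Theorem 305}, so nothing needs to be rebuilt there.

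The genuine difference is in part (i). The paper gets $u_0\neq 0$ by comparing $C^k(\Phi,0)\neq 0$ with $C^k(\Phi,u_0)=0$. You instead evaluate $\Phi$ along the scaling curve of a first eigenfunction to get $\inf\Phi<0$. Your argument is more elementary and never uses $\lambda\notin\sigma(A_s,B_s)$. So it gives (i) for every $\lambda>\lambda_1$, with no non-resonance or sign condition; for instance, part (i) of Theorem~\ref{Theorem 4} then holds without \eqref{106}. The paper's argument has the advantage of producing the critical-group information that (ii) needs anyway.

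There are two things to fix in the write-up.

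First, the Euler-characteristic/Morse-relation sentence is not a valid argument. You only know $C_k(\Phi,0)\neq 0$. Nontrivial groups at $0$ in two adjacent degrees $k,k+1$ (or of infinite rank) are compatible with the Morse relations, so no contradiction follows from them alone. What actually rules out $\{0,u_0\}$ being the whole critical set is the ordering $\Phi(u_0)<0$. Take $\Phi(u_0)<b<0<d$. Then $H^*(\Phi^d)\cong H^*(W)$ and $H^*(\Phi^b)\cong C^*(\Phi,u_0)=\delta_{*0}\Z_2$, and the exact sequence of the pair $(\Phi^d,\Phi^b)$ forces $C^*(\Phi,0)=0$. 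This is the content of the three-critical-point theorem you quote, which in that form needs only $k\ge 1$, whereas Proposition~\ref{Proposition 4-1} needs $k\ge 2$. So drop the Euler-characteristic remark and rely on that theorem or on Proposition~\ref{Proposition 4-1}.

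Second, state explicitly that $u_0$ may be assumed isolated. Otherwise there are infinitely many critical points and both parts are immediate. Without isolation, the formula $C^*(\Phi,u_0)=\delta_{*0}\Z_2$ is not even defined.
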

\subsection{Scaled linking theorem}
\begin{theorem}[Mercuri \& Perera \cite{MePe2}]\label{scaled linking theorem}
    Let $\Phi$ be a $C^1$-functional on $W$. Let $A_0$ and $B_0$ be disjoint nonempty closed symmetric subsets of $\M$ such that
    \begin{eqnarray*}
        i(A_0)=i(\M\backslash B_0)<\infty,
    \end{eqnarray*}
    let $R>\rho>0$, let $e\in\M\backslash A_0$, and let
    \begin{eqnarray*}
        X&=&\set{(\pi((1-\tau)u+\tau e))_t:u\in A_0,\tau\in[0,1], 0\leq t\leq R},\\
        A&=&\set{u_t:u\in A_0, 0\leq r\leq R}\cup\set{(\pi((1-\tau)u+\tau e))_R:u\in A_0, \tau\in[0,1]},\\
        B&=&\set{u_\rho:u\in B_0}.
    \end{eqnarray*}
    Assume that
    \begin{eqnarray}\label{scalin1}
        \sup_{u\in A}\Phi(u)\leq\inf_{u\in B}\Phi(u),\quad\sup_{u\in X}\Phi(u)<\infty.
    \end{eqnarray}
    Let 
    \begin{eqnarray*}
        \Gamma=\set{\gamma\in C(X,W):\gamma(X)\text{ is closed and }\gamma|_{A}=id}
    \end{eqnarray*}
    and set 
    \begin{eqnarray}\label{scalin2}
        c:=\inf_{\gamma\in\Gamma}\sup_{u\in\gamma(X)}\Phi(u).
    \end{eqnarray}
    Then
    \begin{eqnarray*}
        \inf_{u\in B}\Phi(u)\leq c\leq\sup_{u\in X}\Phi(u).
    \end{eqnarray*}
    If $\Phi$ satisfies the $(PS)_c$ condition, then $c$ is a critical value of $\Phi$.
\end{theorem}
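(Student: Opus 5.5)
The plan is to follow the standard linking argument, now carried out along the scaling. Assume $\Gamma \ne \emptyset$ (the identity on $X$ is in $\Gamma$ when $X$ is closed). The upper bound $c \le \sup_X \Phi$ is then immediate. For the lower bound we show that every $\gamma \in \Gamma$ satisfies $\gamma(X) \cap B \ne \emptyset$. Granting this, $\sup_{\gamma(X)} \Phi \ge \inf_B \Phi$ for each $\gamma$, so $c \ge \inf_B \Phi$. The final assertion is the usual deformation argument. If $c$ were not critical, the $(PS)_c$ condition gives $\varepsilon > 0$ and an odd-free deformation $\eta$ of $W$ with two properties: $\eta$ is the identity outside $\Phi^{-1}([c-2\varepsilon, c+2\varepsilon])$, and $\eta(\Phi^{c+\varepsilon}) \subset \Phi^{c-\varepsilon}$. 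The case $c = \inf_B \Phi \ge \sup_A \Phi$ needs care. There we use the version of the deformation lemma that keeps $A$ fixed, applied to a neighbourhood where $\Phi \ge c - \varepsilon$; alternatively, if $\sup_A \Phi = c$, we argue as in the classical linking theorem via the intersection property directly. Then $\eta \circ \gamma \in \Gamma$: it stays the identity on $A$, and $\eta\circ\gamma(X)$ is closed because $\eta$ is a homeomorphism. This contradicts the definition of $c$.

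The core step is the intersection property. Write elements of $W \setminus \{0\}$ in "scaled polar coordinates". By $(H_{10})$, $(H_{11})$ and the scaling property $I_s(u_t) = t^s I_s(u)$, each $v \ne 0$ can be written uniquely as $v = w_t$ with $w \in \M$ and $t = I_s(v)^{1/s} > 0$. Indeed, the curve $t \mapsto I_s(w_t) = t^s$ meets level $\rho^s$ exactly once. Hence $B = \{v : I_s(v) = \rho^s,\ \pi_s(v) \in B_0\}$, where $\pi_s(v) \in \M$ is the scaling projection, i.e. $\pi_s(v) = v_{I_s(v)^{-1/s}}$, which is continuous and odd by $(H_2)$. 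Suppose $\gamma(X) \cap B = \emptyset$. Consider $C = A_0$, $C_0 = \{v: I_s(v) \le \rho^s\}$ and $C_1 = \{v : I_s(v) \ge \rho^s\} \cup \{v: I_s(v)\leq \rho^s\}$. More usefully, we build the odd map
\[
\varphi : A_0 \times [0,1] \to W, \qquad \varphi(u,\tau) = \gamma\big((\pi((1-\tau)u + \tau e))_{R}\big)\ \text{rescaled}.
\]
The aim is to find a symmetric subset $S$ of $\gamma(X) \cap \{I_s = \rho^s\}$ with $i(S) \ge i(A_0) + 1$. Projecting $S$ via $\pi_s$ then gives a symmetric subset of $\M \setminus B_0$ of index $> i(\M\setminus B_0)$, a contradiction.

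To get this index estimate we use the suspension and piercing properties $(i_6)$, $(i_7)$. First symmetrise. The set $X$ is the scaled cone over the "hemisphere" joining $A_0$ to $e$. Glue it with its reflection $-X$ (built with $-e$) to obtain a set $\widetilde X$ homeomorphic to the scaled cone over $\Sigma A_0$, and extend $\gamma$ oddly by $\gamma(-v) = -\gamma(v)$. This is consistent on the overlap, because on $\{u_t : u \in A_0\} \subset A$ the map $\gamma$ is the identity and $A_0$ is symmetric. Now define $\psi(w,t) = \widetilde\gamma(w_{tR})$ for $w \in \Sigma A_0$ (realised in $\M$) and $t \in [0,1]$. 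At $t = 0$ we get $\psi = 0 \in \{I_s \le \rho^s\}$. At $t = 1$ we have $\psi = $ identity on the outer boundary, so $I_s = R^s > \rho^s$. Apply $(i_7)$ with $C_0 = \{I_s \le \rho^s\}$ and $C_1 = \{I_s \ge \rho^s\}$, restricted to $C = \Sigma A_0$ (minus the two cone points, handled by a small neighbourhood and $(i_4)$). This yields
\[
i\big(\widetilde\gamma(\widetilde X) \cap \{I_s = \rho^s\}\big) \ge i(\Sigma A_0) = i(A_0) + 1 .
\]
Since $\gamma(X) \cap B = \emptyset$ and $B$ is symmetric, the odd extension also misses $B$. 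So $\pi_s$ of this intersection lies in $\M \setminus B_0$, giving $i(\M \setminus B_0) \ge i(A_0) + 1$, the desired contradiction.

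The main obstacle is making this symmetrisation and the suspension identification rigorous. Concretely, we must check that $(\tau, u) \mapsto \pi((1-\tau)u + \tau e)$ is well defined, i.e. that $(1-\tau)u + \tau e \neq 0$, which follows from $e \notin A_0$ and $e\ne -u$ after possibly replacing $e$. We must also check that the glued set is an odd-homeomorphic image of $\Sigma A_0$, and that the closedness hypotheses in $(i_7)$ hold; the latter come from the requirement that $\gamma(X)$ be closed and from the continuity of the scaling $(H_4)$–$(H_5)$. If the direct gluing proves awkward, the fallback is to run the piercing argument on $A_0 \times [0,1]$ and $-A_0\times[0,1]$ separately and combine them with subadditivity $(i_5)$.
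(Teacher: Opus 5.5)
The paper only quotes this theorem from \cite{MePe2}, so the comparison here is with the standard cohomological-index linking argument. Your architecture matches it: the upper bound comes from $\mathrm{id}_X\in\Gamma$, and the lower bound from $\gamma(X)\cap B\neq\emptyset$. That intersection property in turn comes from a suspension of $A_0$, the piercing property, and the odd homeomorphism $v\mapsto v_{1/\rho}$ from $\M_\rho\setminus B$ onto $\M\setminus B_0$. However, the symmetrisation step, which you yourself single out as the crux, fails as you set it up.

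You extend $\gamma$ oddly to the subset $X\cup(-X)$ of $W$, claiming consistency because the overlap is $\set{u_t:u\in A_0,\ 0\le t\le R}\subset A$. In general $X\cap(-X)$ is much larger. Take $W=\R^3$, $u_t=tu$, $I_s(u)=|u|^2$ (so $\M=S^2$ and $\pi(u)=u/|u|$), $A_0=\set{u\in S^2:u_3=\pm h}$ with $0<h<1$, $B_0=\set{u\in S^2:|u_3|\le h/2}$, and $e=(0,0,1)$. All hypotheses hold, with $i(A_0)=i(\M\setminus B_0)=1$. Then $\set{\pi((1-\tau)u+\tau e):u\in A_0,\ \tau\in[0,1]}$ is the cap $\set{u_3\ge-h}$ and its reflection is $\set{u_3\le h}$. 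Hence $X\cap(-X)=\set{v:|v|\le R,\ |v_3|\le h|v|}$, most of which lies outside $A$, where $\gamma$ is unconstrained. So the rule $\gamma(-v):=-\gamma(v)$ is not well defined. Moreover $X\cup(-X)$ is the whole ball, whose spheres have index $3\neq 2=i(\Sigma A_0)$, so it is not an odd-homeomorphic cone over $\Sigma A_0$. The check you postpone therefore cannot succeed. The fallback does not help either: $\gamma$ on $X$ gives no odd map on $A_0\times[0,1]$ to pierce with, and $(i_5)$ only bounds the index of a union from above, so it can never yield the lower bound $i(A_0)+1$.

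The missing idea is to symmetrise in the parameter space rather than in $W$. For $u\in A_0$ and $t\in[0,R]$ set
\[
\psi(u,\tau,t)=
\begin{cases}
\gamma\big((\pi((1-\tau)u+\tau e))_t\big), & 0\le\tau\le1,\\
-\gamma\big((\pi((1+\tau)(-u)-\tau e))_t\big), & -1\le\tau\le0.
\end{cases}
\]
The two formulas agree at $\tau=0$ because $\gamma=\mathrm{id}$ on $A$ and $(-u)_t=-u_t$ by $(H_2)$. Also $\psi(-u,-\tau,t)=-\psi(u,\tau,t)$ and $\psi(\cdot,\pm1,t)=\pm\gamma(e_t)$, so $\psi$ descends to an odd map on the free $\Z_2$-space $\Sigma A_0\times[0,R]$. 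It satisfies $\psi=0$ at $t=0$ and $I_s\circ\psi=R^s>\rho^s$ at $t=R$.

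Apply $(i_7)$ on that space, using the Fadell--Rabinowitz index of paracompact free $\Z_2$-spaces, with $\varphi(w,\theta)=(w,\theta R)$, $C_0=\set{I_s\circ\psi\le\rho^s}$ and $C_1=\set{I_s\circ\psi\ge\rho^s}$. Together with $(i_6)$ this gives $i(Z)\ge i(\Sigma A_0)=i(A_0)+1$ for $Z=\set{I_s\circ\psi=\rho^s}$.

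This also removes a second defect of your version. Your $C_0=\set{v\in W:I_s(v)\le\rho^s}$ contains $0$, where $\psi$ lands at $t=0$ and possibly at interior points too. So $C_0$ is not admissible in $(i_7)$, and removing cone points of $\Sigma A_0$ does not cure this.

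To finish: $\psi(Z)\subset\M_\rho\cap(\gamma(X)\cup-\gamma(X))$ and $B=-B$, so $w\mapsto(\psi(w))_{1/\rho}$ is an odd continuous map from $Z$ into $\M\setminus B_0$. Hence $i(\M\setminus B_0)\ge i(A_0)+1$, the desired contradiction.

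Separately, your treatment of the case $c=\sup_A\Phi=\inf_B\Phi$ is only gestured at. The usual argument deforms only in a small neighbourhood of $B$, moving points by less than $\operatorname{dist}(A,B)$. The deformation then fixes $A$ and $\eta\circ\gamma(X)$ misses $B$. This uses $\operatorname{dist}(A,B)>0$, which holds for instance when $A_0$ is compact, as in all the applications in the paper.
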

\subsection{Asymptotically scaled equations}
 Now we consider equations of the type
 \begin{equation}\label{3-6}
     A_s(u)=\lambda B_s(u)+g(u)
 \end{equation}
 in $W^*$, where $A_s,B_s\in\A_s$ satisfy $(H_6)$-$(H_{12})$, $g\in C(W,W^*)$ is a compact potential operator satisfying
 \begin{equation}\label{3-7}
     g(u_t)v_t=o(t^s)\norm{v}\quad\text{as }t\to\infty
 \end{equation}
 uniformly in $u$ on bounded sets for all $v\in W$, and $\lambda\in\R$. This equation is called an asymptotically scaled equation. 
 
 The variational functional associated with equation \eqref{3-6} is given by
 \[
 \Phi(u)=I_s(u)-\lambda J_s(u)-G(u),\quad u\in W,
 \]
 We have the following theorem \cite[Theorem 2.25]{MePe2}.
 \begin{theorem}\label{Theorem 304}
     Assume $(H_1)$-$(H_{12})$ and $\eqref{3-7}$, and let $\lambda\in\R\backslash\sigma(A_s,B_s)$. Then equation \eqref{3-6} has a solution.
 \end{theorem}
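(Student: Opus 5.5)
Since $\lambda\notin\sigma(A_s,B_s)$, I would look for a solution of \eqref{3-6} as a critical point of $\Phi=I_s-\lambda J_s-G$, chosen according to where $\lambda$ sits relative to the minimax eigenvalues of Theorem \ref{Theorem 301}. The proof then splits into two parts: a compactness step, in which Palais--Smale sequences are shown to be bounded, and a geometric step, in which a minimax level is produced.

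\textbf{Compactness.} Let $(u_j)$ be a (PS) sequence and suppose, for contradiction, that $\norm{u_j}\to\infty$.
\begin{enumerate}
\item By $(H_{11})$ we can write $u_j=(w_j)_{t_j}$ with $w_j\in\M_s$ and $t_j\to\infty$. Coercivity $(H_{10})$ together with $(H_4)$, $(H_5)$ ties the growth of $\norm{u_j}$ to the growth of $t_j$.
\item Test $\Phi'(u_j)$ against $(v)_{t_j}$ and divide by $t_j^s$. The scaling identities for $A_s$ and $B_s$ give
\[
A_s(w_j)v-\lambda B_s(w_j)v-t_j^{-s}\,g((w_j)_{t_j})v_{t_j}\to0 .
\]
By \eqref{3-7}, the last term tends to $0$ uniformly, since $(w_j)$ is bounded.
\item Since $\M_s$ is bounded and $W$ is reflexive, pass to a subsequence with $w_j\weak w$. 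Taking $v=w_j-w$ and using $(H_9)$ gives $A_s(w_j)(w_j-w)\to0$, so by $(H_7)$ the convergence $w_j\to w$ is strong.
\item Then $w\in\M_s$, so $w\neq0$, and $A_s(w)=\lambda B_s(w)$. This contradicts $\lambda\notin\sigma(A_s,B_s)$.
\end{enumerate}
The delicate point is the test-function manipulation: $\Phi'(u_j)\to0$ is measured in $W^*$, so I must check that $\norm{v_{t_j}}$ grows at most like $O(t_j^s)$ via $(H_5)$ and is absorbed after dividing by $t_j^s$. I expect this step to be the main obstacle. Once boundedness is known, Proposition \ref{PS compact} applies, because $B_s$ is compact by $(H_9)$ and $g$ is compact, and it yields (PS).

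\textbf{Geometry, case $\lambda<\lambda_1$.} On $\M_s$ we have $J_s\le 1/\lambda_1$, so along the scaling $I_s(u_t)-\lambda J_s(u_t)\ge t^s(1-\lambda/\lambda_1)$. Combined with the $o(t^s)$ bound on $G(u_t)$, obtained by integrating \eqref{3-7}, this makes $\Phi$ coercive. A global minimizer then exists and is the solution.

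\textbf{Geometry, case $\lambda_k<\lambda<\lambda_{k+1}$.} Here I would apply Theorem \ref{scaled linking theorem} with
\[
A_0=\w{\Psi}^{\lambda_k},\qquad B_0=\w{\Psi}_{\lambda_{k+1}}\ (=\{u\in\M_s:\w{\Psi}(u)\ge\lambda_{k+1}\}).
\]
By Theorem \ref{Theorem 301}(iii), $i(A_0)=i(\M_s\setminus B_0)=k$.
\begin{enumerate}
\item On the set $A$: for $u\in A_0$, $I_s(u_t)-\lambda J_s(u_t)\le t^s(1-\lambda/\lambda_k)<0$, and this tends to $-\infty$ like $-ct^s$.
\item On $B$: for $u\in B_0$ the same quantity is $\ge t^s(1-\lambda/\lambda_{k+1})$, which is positive.
\item The term $G$ is $o(t^s)$ as $t\to\infty$, but it may be bounded and nonzero near $0$. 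I would therefore take $\rho$ large, so that $\inf_B\Phi$ is large and positive, and then $R\gg\rho$. The top piece $\{(\pi((1-\tau)u+\tau e))_R\}$ must also be handled on $A$; for it I would choose $e\in\M_s\setminus A_0$ and argue through the sublevel structure as in \cite{MePe2}.
\item Finiteness of $\sup_X\Phi$ follows from boundedness of $\Phi$ on bounded sets.
\end{enumerate}
The (PS) condition then gives a critical value $c$, and hence a solution.
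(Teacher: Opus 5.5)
Your compactness step is sound, and it is exactly the step the paper singles out from \cite{MePe2} (Proposition \ref{Proposition 304-1}). Two small corrections: the decomposition $u_j=(w_j)_{t_j}$ comes from $I_s(u_t)=t^sI_s(u)$ and $(H_6)$, not from $(H_{11})$; and for $\lambda<0$ you should use $\lambda^+$ in the case $\lambda<\lambda_1$.

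\textbf{The gap: the case $\lambda_k<\lambda<\lambda_{k+1}$.} Theorem \ref{scaled linking theorem} cannot be applied there, and the top piece of $A$ is exactly where it breaks. Let $C=\{\pi((1-\tau)u+\tau e):u\in A_0,\ \tau\in[0,1]\}$.
\begin{enumerate}
\item Since $\pm e\notin A_0$, the map $(u,\tau)\mapsto\pi((1-|\tau|)u+\tau e)$, $\tau\in[-1,1]$, is well defined, odd, and constant on $A_0\times\{\pm1\}$. It therefore induces an odd map of $\Sigma A_0$ onto $C\cup(-C)$.
\item By $(i_2)$ and $(i_6)$, $i(C\cup(-C))\ge k+1>k=i(\M_s\setminus B_0)$. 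Since $\w{\Psi}$ is even, $C$ must meet $B_0$.
\item For $w\in C\cap B_0$ we have $w_R\in A$, $w_\rho\in B$, and
\[
\Phi(w_R)-\Phi(w_\rho)=(R^s-\rho^s)\Big(1-\frac{\lambda}{\w{\Psi}(w)}\Big)-G(w_R)+G(w_\rho)\ge(R^s-\rho^s)\Big(1-\frac{\lambda}{\lambda_{k+1}}\Big)-o(R^s)-o(\rho^s).
\]
\end{enumerate}
The right-hand side is positive in your regime $R\gg\rho\gg1$, whatever $e$ you pick, and for every $R>\rho$ when $g=0$. Hence $\sup_A\Phi>\inf_B\Phi$, and \eqref{scalin1} fails.

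The half-ball linking is the geometry of the superscaled problems (Theorems \ref{Theorem 6} and \ref{Theorem 8}), where the cap over a compact $A_0$ is pushed down by the superscaled term. In the nonresonant asymptotically scaled case, $\Phi$ instead grows like $t^s(1-\lambda/\w{\Psi})$ along scaling orbits through $B_0$.

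\textbf{The fix.} What you need is a scaled \emph{saddle point} linking, and your two estimates are exactly the right ones for it. With $A_0=\w{\Psi}^{\lambda_k}$ and $B_0=\w{\Psi}_{\lambda_{k+1}}$, take
\[
X=\{u_t:u\in A_0,\ 0\le t\le R\},\qquad A=\{u_R:u\in A_0\},\qquad B=\{u_t:u\in B_0,\ t\ge0\},
\]
with no $\rho$ and no $e$.
\begin{enumerate}
\item $A$ links $B$. Suppose $\gamma\in C(X,W)$, $\gamma|_A=\mathrm{id}$, and $\gamma(X)\cap B=\emptyset$. Then $0\notin\gamma(X)$. With $P(v)=v_{I_s(v)^{-1/s}}$ we get $P(\gamma(X))\subset\M_s\setminus B_0$. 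Define a map on $\Sigma A_0$ by $P(\gamma(u_{(1+\tau)R}))$ for $\tau\in[-1,0]$ and by $-P(\gamma((-u)_{(1-\tau)R}))$ for $\tau\in[0,1]$. This is an odd continuous map $\Sigma A_0\to\M_s\setminus B_0$, contradicting $i(\Sigma A_0)=k+1>k$.
\item On $A$, $\sup_A\Phi\le R^s(1-\lambda/\lambda_k+o(1))\to-\infty$ as $R\to\infty$.
\item On $B$, $\Phi(u_t)\ge t^s(1-\lambda/\lambda_{k+1})-\varepsilon t^s-C_\varepsilon$, so $\inf_B\Phi>-\infty$.
\end{enumerate}
Choosing $R$ large gives $\sup_A\Phi<\inf_B\Phi$, and $\sup_X\Phi<\infty$ because $X$ is bounded. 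Together with (PS), the minimax value over $\{\gamma(X)\}$ is then a critical value.
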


 An important step in the proof of Theorem \ref{Theorem 304} presented in \cite{MePe2} is the following.

 \begin{proposition}\label{Proposition 304-1}
     Assume $(H_1)$-$(H_{12})$ and $\eqref{3-7}$, and let $\lambda\in\R\backslash\sigma(A_s,B_s)$, then $\Phi$ satisfies the $(PS)$ condition.
 \end{proposition}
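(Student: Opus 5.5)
We must prove Proposition \ref{Proposition 304-1}: under $(H_1)$--$(H_{12})$ and \eqref{3-7}, with $\lambda\notin\sigma(A_s,B_s)$, the functional $\Phi=I_s-\lambda J_s-G$ satisfies (PS). The plan has two steps: show every (PS) sequence is bounded, then apply Proposition \ref{PS compact}. The second step needs the lower-order part $\lambda B_s+g$ to be compact in a suitable sense. $(H_9)$ says $B_s$ maps weakly convergent sequences to strongly convergent ones, and $g$ is compact. The argument of Proposition \ref{PS compact} only uses that, along a bounded sequence with $u_j\weak u$, the term $(\lambda B_s+g)(u_j)(u_j-u)$ tends to $0$. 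Then $A_s(u_j)(u_j-u)\to0$, and $(H_7)$ gives a strongly convergent subsequence. So everything reduces to boundedness.

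For boundedness, argue by contradiction: take $(u_j)$ with $\Phi(u_j)$ bounded, $\Phi'(u_j)\to0$, and $\norm{u_j}\to\infty$. Rescale onto the manifold $\M_s$. By $(H_{11})$ and the scaling $I_s(u_t)=t^sI_s(u)$, write $u_j=(\tilde u_j)_{t_j}$ with $\tilde u_j\in\M_s$ and $t_j=I_s(u_j)^{1/s}$. Coercivity $(H_{10})$ and boundedness of $I_s$ on bounded sets give $t_j\to\infty$. Indeed, $I_s(u_j)\to\infty$ because $\norm{u_j}\to\infty$. Here $(H_5)$, or the continuity and boundedness in $(H_4)$, is used to compare norms. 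Since $\M_s$ is bounded and $W$ is reflexive, pass to a subsequence with $\tilde u_j\weak\tilde u$.

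Next test $\Phi'(u_j)$ against $v_{t_j}$ for fixed $v$ and divide by $t_j^s$. The scaled-operator identity gives
\[
\frac{\Phi'(u_j)v_{t_j}}{t_j^s}=A_s(\tilde u_j)v-\lambda B_s(\tilde u_j)v-\frac{g((\tilde u_j)_{t_j})v_{t_j}}{t_j^s}.
\]
By \eqref{3-7} the last term is $o(1)\norm{v}$, uniformly because $\tilde u_j$ is bounded. For the left side we need $\norm{v_{t_j}}=O(t_j^{s'})$ with a compatible exponent. This is the delicate point. I would first try testing with $v=\tilde u_j-\tilde u$ scaled, i.e.\ $(\tilde u_j-\tilde u)_{t_j}$, and bound $\|(\tilde u_j-\tilde u)_{t_j}\|$ using $(H_5)$. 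If the growth exponents don't match, I would instead use the estimate $\Phi'(u_j)u_j=o(\norm{u_j})$ together with the homogeneity structure. Once $A_s(\tilde u_j)(\tilde u_j-\tilde u)\to0$ is established (using $(H_9)$ for $B_s$), $(H_7)$ gives $\tilde u_j\to\tilde u$ strongly. Hence $\tilde u\in\M_s$, so $\tilde u\neq0$. Passing to the limit in the identity with fixed $v$ gives $A_s(\tilde u)=\lambda B_s(\tilde u)$. Thus $\lambda$ is an eigenvalue, contradicting $\lambda\notin\sigma(A_s,B_s)$.

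The main obstacle is controlling $\Phi'(u_j)v_{t_j}/t_j^s$, that is, showing the error from $\Phi'(u_j)\to0$ survives rescaling. The cleanest route is to use the dual norm under scaling: define $v=w_{1/t_j}$, so that $\Phi'(u_j)w=t_j^s(\ldots)(w_{1/t_j})$. If needed, I would rely on the concrete scaling, where $\|v_t\|$ is controlled by powers of $t$ from $(H_5)$. Alternatively, I would exploit that $\Phi(u_j)$ is bounded while $I_s(u_j)=t_j^s\to\infty$. This yields $1-\lambda J_s(\tilde u_j)\to0$, which prevents $\tilde u=0$ as a limit, and combined with $(H_{12})$ it identifies the limit as an eigenfunction.
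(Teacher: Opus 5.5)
Your proposal is correct and follows the standard argument. The paper gives no proof of its own and cites Mercuri--Perera for this proposition, but it uses the same rescaling $t_j=I_s(u_j)^{1/s}$, $\w{u}_j\in\M_s$ in Lemma \ref{Lemma3}: test $\Phi'(u_j)$ with $(\w{u}_j-\w{u})_{t_j}$, use $(H_9)$ and $(H_7)$ to get $\w{u}_j\to\w{u}\in\M_s$ with $A_s(\w{u})=\lambda B_s(\w{u})$, contradict $\lambda\notin\sigma(A_s,B_s)$, and finish with Proposition \ref{PS compact}. The ``delicate point'' you flag is not an obstacle, because $(H_5)$ is stated with the same exponent $s$ as the scaled operators, so $\|(\w{u}_j-\w{u})_{t_j}\|=O(t_j^s)$ uniformly and $\Phi'(u_j)(\w{u}_j-\w{u})_{t_j}=o(t_j^s)$ directly; your fallback routes are unnecessary, and the last one (bounded energy plus $(H_{12})$) would not by itself identify an eigenfunction.
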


 \subsection{Superscaled equations}
Consider the equation
\begin{equation}\label{3-8}
A_s(u)=f(u)
    \end{equation}
in $W^*$, where $A_s\in\A_s$ satisfies $(H_6)$, $(H_7)$ and $(H_{10})$, and $f\in C(W,W^*)$ is a potential operator with potential
\[
F(u)=\int_0^1f(\tau u)ud\tau
\]
satisfying 
\begin{equation}\label{3-9}
    \frac{F(u_t)}{t^s}\to\infty\quad\text{as }t\to\infty
\end{equation}
uniformly in $u$ on compact subsets of $\M_s$.

We assume that
\begin{equation}\label{3-10}
    f(u)=\lambda B_s(u)+g(u)
\end{equation}
where $B_s\in\A_s$ satisfies $(H_8)$, $(H_9)$, and $(H_{12})$, $g\in C(W,W^*)$ is a potential operator satisfying
\begin{equation}\label{3-11}
    g(u_t)v_t=o(t^s)\|v\|\quad\text{as }t\to0
\end{equation}
uniformly in $u$ on bounded sets for all $v\in W$, and $\lambda\in\R\backslash\sigma(A_s,B_s)$. 

The variation functional associated with \eqref{3-8} is given by
\[
\Phi(u)=I_s(u)-\lambda J_s(u)-G(u),\quad u\in W,
\]
where $I_s$ and $J_s$ are the potentials of $A_s$ and $B_s$ respectively, with $I_s(0)=0=J_s(0)$, and 
\[
G(u)=\int_0^1g(\tau u)ud\tau
\]
is the potential of $g$ with $G(0)=0$. We further assume that
\begin{equation}\label{3-12}
    G(u)\geq0\quad\forall u\in W.
\end{equation}

In this configuration we have the following result \cite[Theorem 2.27]{MePe2}.

\begin{theorem}\label{Theorem 406}
    Assume $(H_1)$-$(H_{12})$, \eqref{3-9},\eqref{3-10}, \eqref{3-11}, and \eqref{3-12}, let $\lambda\in\R\backslash\sigma(A_s,B_s)$. If $\Phi$ satisfies the $(PS)$ condition, then equation \eqref{3-8} has a nontrivial solution at a positive level.
\end{theorem}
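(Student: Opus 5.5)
Theorem \ref{Theorem 406} is the abstract superscaled result of \cite[Theorem 2.27]{MePe2}, and I would prove it with the scaled linking theorem, Theorem \ref{scaled linking theorem}, taking $\M=\M_s$. Since $\lambda\notin\sigma(A_s,B_s)$, there are two cases.

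If $\lambda<\lambda_1$, the argument is of mountain-pass type, with $A_0=\emptyset$ handled as the degenerate case of the theorem (equivalently, one takes the linking of $\set{0,e_R}$ with $B$). If $\lambda_k<\lambda<\lambda_{k+1}$, I set $A_0=\w{\Psi}^{\lambda_k}$ and $B_0=\w{\Psi}_{\lambda_{k+1}}$. By Theorem \ref{Theorem 301}\ref{301-3}, $i(A_0)=i(\M_s\setminus B_0)=k$. For $e$ I pick any point of $\M_s\setminus A_0$.

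Next come the estimates on $B$. For $u\in\M_s$, the scaling property gives $I_s(u_\rho)=\rho^s$ and $J_s(u_\rho)=\rho^s J_s(u)$. For $u\in B_0$ we have $J_s(u)\le1/\lambda_{k+1}$, so
\[
I_s(u_\rho)-\lambda J_s(u_\rho)\ge\rho^s\left(1-\frac{\lambda}{\lambda_{k+1}}\right).
\]
Condition \eqref{3-11} says $G(u_t)=o(t^s)$ as $t\to0$, uniformly for $u$ in the bounded set $\M_s$. I would obtain this by writing $G(u_t)=\int_0^1 g(\tau u_t)u_t\,d\tau$ and using $(\tau u)_t=\tau u_t$. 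It follows that $\inf_B\Phi\ge c_0\rho^s>0$ for small $\rho$.

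On $A$ the estimates are as follows.
\begin{itemize}
\item For $u\in A_0$, $\Phi(u_t)\le t^s(1-\lambda/\lambda_k)-G(u_t)\le0$, using \eqref{3-12}.
\item On the top part of $A$, the set $\set{\pi((1-\tau)u+\tau e)}$ must be shown compact, or at least one must show that \eqref{3-9} holds uniformly on it. Given that, $\Phi(v_R)=R^s(1-\lambda J_s(v))-F$-type terms, and $F(v_R)/R^s\to\infty$ forces $\Phi(v_R)\le0$ for large $R$.
\item Finiteness of $\sup_X\Phi$ comes from $\Phi\le I_s$, since $\lambda J_s+G=F$ and $F\ge$ the relevant lower bound. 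More simply, $X$ is bounded by $(H_4)$ and $\Phi$ is bounded above there because $-\lambda J_s-G\le|\lambda|J_s$ is bounded on bounded sets.
\end{itemize}
Theorem \ref{scaled linking theorem} together with $(PS)$ then gives a critical value $c\ge\inf_B\Phi>0=\Phi(0)$, hence a nontrivial solution at a positive level.

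The main obstacle is uniformity on the top part of $A$. In the superscaled regime, \eqref{3-9} is only assumed uniform on compact subsets of $\M_s$, so $A_0$ would need to be compact, which in general it is not. I would first try replacing $A_0$ by a compact symmetric subset of index $k$: choose $\mu\in(\lambda_k,\lambda)$ and a compact $C\subset\w{\Psi}^{\mu}$ with $i(C)=k$. Such a set exists by the minimax characterization of $\lambda_k$ combined with a finite-dimensional approximation, as in \cite{MR2640827}. The condition $i(C)=i(\M_s\setminus B_0)$ still holds, and the set $\set{\pi((1-\tau)u+\tau e)}$ is then compact, being a continuous image of $C\times[0,1]$.
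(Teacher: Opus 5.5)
Your proposal is correct and follows the same route as the paper. The statement itself is quoted from \cite{MePe2}, and the identical scheme is carried out in the proof of Theorem~\ref{Theorem 6}: mountain pass for $\lambda<\lambda_1$, and for $\lambda_k<\lambda<\lambda_{k+1}$ Theorem~\ref{scaled linking theorem} with $B_0=\w{\Psi}_{\lambda_{k+1}}$ and $A_0$ a \emph{compact} symmetric index-$k$ subset of the open set $\M_s\setminus\w{\Psi}_\lambda$ (via Degiovanni--Lancelotti), which is equivalent to your $C\subset\w{\Psi}^{\mu}$ with $\mu\in(\lambda_k,\lambda)$. You should discard the initial choice $A_0=\w{\Psi}^{\lambda_k}$ from the outset, since, as you note, compactness of $A_0$ is exactly what makes \eqref{3-9} usable on the top part of $A$.
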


 \subsection{Local linking based on scaling}
\begin{definition}\label{local linking}
    We will say that $\Phi$ has a scaled local linking near the origin in dimension $k\geq1$ if there are disjoint nonempty closed symmetric subsets $A_0$ and $B_0$ of $\M$ with
    \begin{eqnarray*}
        i(A_0)=i(\M\backslash B_0)=k
    \end{eqnarray*}
    and $\rho>0$ such that
    \begin{eqnarray*}
        \begin{cases}
            \Phi(u_t)\leq0,\quad\forall u\in A_0, 0\leq t\leq\rho\\
            \Phi(u_t)>0,\quad\forall u\in B_0, 0<t\leq\rho.
        \end{cases}
    \end{eqnarray*}
\end{definition}
To understand the usefulness of this notion, let us recall the definition of critical groups.
\begin{definition}\label{criticalgroups}
    Let $\Phi$ be a $C^1$-functional on a Banach space $W$ and let $u_0$ be an isolated critical point of $\Phi$. The critical groups of $E$ in $u_0$ are defined by
    \[
    C^q(\Phi,u_0)=H^q(\Phi^c\cap U,\Phi^c\cap U\backslash\set{u_0}),\quad q\ge0,
    \]
    where $c=\Phi(u_0)$ is the corresponding critical value and $U$ is a neighborhood of $u_0$ that does not contain critical points other than $u_0$.
\end{definition}
We have the following theorem (see \cite[Theorem 2.29]{MePe2})
\begin{theorem}\label{Theorem 305}
    If $\Phi$ has a scaled local linking near the origin in dimension $k$, then
    \begin{eqnarray*}
        C^k(\Phi,0)\neq0.
    \end{eqnarray*}
\end{theorem}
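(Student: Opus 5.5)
We prove $C^k(\Phi,0)\ne0$ by showing that the class of $\omega^{k-1}$ survives in the pair defining the critical group. Take $U$ a small neighborhood of $0$ containing no other critical point; by excision and the standard deformation properties we may shrink $U$ to a set of the form
\[
U_\rho=\set{u_t : u\in\M,\ 0\le t\le\rho},
\]
which is a neighborhood of the origin because $I_s$ is coercive and continuous and $(H_{11})$ holds. Every $v\neq0$ is uniquely of the form $u_t$ with $u\in\M$ and $t>0$, via the scaling property $I_s(u_t)=t^sI_s(u)$. This gives a continuous odd "scaled polar" map $v\mapsto(\pi_s(v),t(v))$, and $U_\rho\setminus\set{0}$ deformation retracts, oddly, onto $\M_\rho=\set{u_\rho:u\in\M}$. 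Set $c=0=\Phi(0)$, $X=\Phi^0\cap U_\rho$ and $X_0=X\setminus\set{0}$.

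\textbf{Step 1: $X$ is contractible.} For $u\in\M$ and $t\le\rho$ we have $\Phi(u_t)\le0$ whenever $u\in A_0$, while $\Phi(u_t)>0$ for $u\in B_0$, $0<t\le\rho$. In general the rays $t\mapsto u_t$ need not stay in $\Phi^0$, so direct contraction along rays fails. Instead we compute $C^k(\Phi,0)$ through the exact sequence of the pair, which reduces the claim to showing that the inclusion $A:=\set{u_t:u\in A_0,0<t\le\rho}\hookrightarrow X_0$ carries nontrivial cohomology in degree $k-1$ that dies in $X$.

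\textbf{Step 2: the main argument.} We have the inclusions $A\subset X_0\subset U_\rho\setminus(\set{0}\cup B)$, where $B=\set{u_t:u\in B_0,0<t\le\rho}$, since $\Phi>0$ on $B$. Composing with the odd retraction $\pi_s$ gives
\[
A_0\simeq A\subset X_0\to \M\setminus B_0 .
\]
Since $i(A_0)=i(\M\setminus B_0)=k$ and the composite $A_0\to\M\setminus B_0$ is the inclusion, the restriction $H^{k-1}(\overline{\M\setminus B_0})\to H^{k-1}(\overline{A_0})$ maps $f^*\omega^{k-1}\ne0$ to $f^*\omega^{k-1}\ne0$. Hence the class $\omega^{k-1}$ pulled back to $\overline{X_0}$ is nonzero, and its restriction to $\overline{A}$ is nonzero.

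To pass from this $\Z_2$-equivariant statement to ordinary cohomology, the cleanest route, following \cite{MePe2} and the analogous result for ordinary local linking in Perera et al. \cite{MR2640827}, is to argue by contradiction through the piercing and continuity properties. Alternatively, we show directly that the cone $\set{u_t: u\in A_0, 0\le t\le\rho}$ lies in $X$ and is contractible. Its base $A$ supports the nonzero class, and through the long exact sequence
\[
\cdots\to \tilde H^{k-1}(X)\to \tilde H^{k-1}(X_0)\to H^k(X,X_0)\to\cdots
\]
this yields a nonzero element of $C^k$, provided the class is nonzero in $\tilde H^{k-1}(X_0)$ and not in the image of $\tilde H^{k-1}(X)$.

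\textbf{Expected obstacle.} The main difficulty is exactly this last point: converting the index information, which is cohomology of the quotients by $\Z_2$, into nontriviality of the ordinary relative cohomology, while $X$ itself need not be contractible. We would handle it as in the proof of \cite[Theorem 2.29]{MePe2}. Suppose $C^k(\Phi,0)=0$. Then, via the cone over $A$ contained in $X$, one builds an odd map from the suspension-type set $\Sigma A_0$ into $X_0$, and composing with $\pi_s$ gives a map into $\M\setminus B_0$. By stability $(i_6)$ and monotonicity $(i_2)$ this would force $k+1=i(\Sigma A_0)\le i(\M\setminus B_0)=k$, a contradiction. Making this suspension construction precise, using the identification $U_\rho\setminus\set{0}\cong\M\times(0,\rho]$ given by the scaling, is the step we expect to require the most care.
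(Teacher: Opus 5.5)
There is a genuine gap, and it is exactly the step you label the expected obstacle. Everything you actually prove lives in the cohomology of the quotients: $f^*\omega^{k-1}\neq0$ on $\overline{A_0}$ and on $\overline{X_0}$. But $C^k(\Phi,0)=H^k(X,X_0)$ is ordinary cohomology of $X$ and $X_0$, and the quotient class says nothing there directly. For $k\ge2$ its pullback along $X_0\to\overline{X_0}$ is even zero, since $X_0\to\overline{X_0}\to\RP^\infty$ lifts to the contractible $S^\infty$. Your proposed fix also fails. The assumption $C^k(\Phi,0)=0$ is a vanishing statement in ordinary cohomology and gives no way to produce an odd continuous map $\Sigma A_0\to X_0$. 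The cone $\set{u_t:u\in A_0,\,0\le t\le\rho}$ passes through $0\notin X_0$, so it does not contract $A_0$ inside $X_0$. The inequality $k+1\le k$ only confirms that such an odd map never exists, which is consistent with the hypotheses and says nothing about $C^k(\Phi,0)$.

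The missing ingredient is the one the standard argument uses; the cited \cite[Theorem 2.29]{MePe2} adapts it from the local linking results of \cite{MR2640827}. The claim is: if $A_0\subset\M\setminus B_0$ with $i(A_0)=i(\M\setminus B_0)=k$, then the restriction $\widetilde{H}^{k-1}(\M\setminus B_0)\to\widetilde{H}^{k-1}(A_0)$ is nonzero in ordinary cohomology. It follows from the Gysin sequence of the double cover $Y\to\overline{Y}$ with $\Z_2$ coefficients,
\[
\cdots\to H^{k-1}(Y)\xrightarrow{\;\mathrm{tr}\;}H^{k-1}(\overline{Y})\xrightarrow{\;\cup f^*\omega\;}H^{k}(\overline{Y})\to\cdots,
\]
where $\mathrm{tr}$ is the transfer. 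Since $i(\M\setminus B_0)\le k$, we have $f^*\omega^k=0$, so $f^*\omega^{k-1}=\mathrm{tr}(b)$ for some $b\in H^{k-1}(\M\setminus B_0)$. By naturality, $\mathrm{tr}(b|_{A_0})=f^*\omega^{k-1}|_{\overline{A_0}}\neq0$ because $i(A_0)\ge k$. So $b|_{A_0}$ is not in the image of $H^{k-1}(\overline{A_0})$, hence nonzero, also in reduced cohomology when $k=1$. This is where the upper bound $i(\M\setminus B_0)\le k$ genuinely enters.

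With this lemma your ``alternative'' route closes without any contractibility of $X$. Let $\beta\in\widetilde{H}^{k-1}(X_0)$ be the pullback of $b$ under $\pi_s:X_0\to\M\setminus B_0$, and let $C=\set{u_t:u\in A_0,\,0\le t\le\rho}\subset X$. The restriction of $\beta$ to $C\setminus\set{0}\simeq A_0$ is $b|_{A_0}\neq0$. On the other hand, every class coming from $\widetilde{H}^{k-1}(X)$ restricts to $C\setminus\set{0}$ through $\widetilde{H}^{k-1}(C)=0$, since the cone contracts via $v\mapsto v_{1-\theta}$. Hence $\beta$ is not in that image, and $\delta\beta\neq0$ in $C^k(\Phi,0)$.

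The standard proof packages the same facts as the composite $H^k(U_\rho,U_\rho\setminus D)\to C^k(\Phi,0)\to H^k(C,C\setminus\set{0})$, with $D=\set{u_t:u\in B_0,\,0\le t\le\rho}$. Its outer groups are identified with $\widetilde{H}^{k-1}(\M\setminus B_0)$ and $\widetilde{H}^{k-1}(A_0)$ because $U_\rho$ and $C$ are contractible.
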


We also have the following result \cite[Theorem 2.30]{MePe2}.
\begin{theorem}\label{Theorem 306}
    Let $\Phi$ be a $C^1$-functional on $W$ that satisfies the $(PS)$ condition. Assume that $\Phi$ has a scaled local linking near the origin in dimension $k$ and that $\w{H}^{k-1}(\Phi^a)=0$ for some $a<0$. Then $\Phi$ has a nontrivial critical point.
\end{theorem}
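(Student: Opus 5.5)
We prove Theorem \ref{Theorem 306} by comparing the topology of the sublevel sets of $\Phi$ at the origin with that of a negative sublevel set. The tool is the Morse-type exact sequence of critical groups, combined with Theorem \ref{Theorem 305}. We argue by contradiction and suppose that $0$ is the only critical point of $\Phi$.

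First we check that $0$ is a critical point. The scaled local linking gives $\Phi(0)=\Phi(u_0)=0$ by $(H_3)$ and $\Phi(u_t)>0$ for small $t>0$ along $B_0$. This does not by itself make $0$ critical, but we may assume it is: if $0$ is not a critical point, then $C^k(\Phi,0)=0$ for all $k$, which contradicts Theorem \ref{Theorem 305}. So $0$ is critical at level $0$, and under the contradiction hypothesis it is isolated. Theorem \ref{Theorem 305} then gives $C^k(\Phi,0)=H^k(\Phi^0\cap U,\Phi^0\cap U\setminus\set{0})\neq0$.

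Next we use $(PS)$ together with the absence of other critical values. Since $\Phi$ has no critical values in $[a,0)$ and none in $(0,\infty)$, the standard deformation lemma, valid on $C^1$ Banach manifolds under $(PS)$, gives two things. First, $\Phi^a$ is a strong deformation retract of $\Phi^0\setminus\set{0}$. Second, by excision and the isolatedness of $0$, $C^k(\Phi,0)\cong H^k(\Phi^0,\Phi^0\setminus\set{0})\cong H^k(\Phi^0,\Phi^a)$. Here we use that $0$ is the only critical point at level $0$, so the deformation of $\Phi^0\setminus\set{0}$ down to $\Phi^a$ is available.

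We also need the topology of $\Phi^0$. Since $\Phi$ has no critical values above $0$, the second deformation lemma deforms $W=\Phi^\infty$ onto $\Phi^0$. This step requires some care: for $C^1$ functionals satisfying $(PS)$ without an upper bound, the deformation of $W$ onto $\Phi^{b}$ for $b$ above all critical values is standard (e.g. Perera et al. \cite{MR2640827}). Hence $\Phi^0$ is contractible and $\w{H}^\ast(\Phi^0)=0$.

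The long exact sequence of the pair $(\Phi^0,\Phi^a)$ in reduced Alexander--Spanier cohomology reads
\[
\w{H}^{k-1}(\Phi^a)\to H^k(\Phi^0,\Phi^a)\to \w{H}^k(\Phi^0).
\]
The last term is $0$ by contractibility, and the first is $0$ by hypothesis. Therefore $H^k(\Phi^0,\Phi^a)=0$, and so $C^k(\Phi,0)=0$. This contradicts Theorem \ref{Theorem 305}, so $\Phi$ has a critical point $u\neq0$.

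The main obstacle is justifying the identification $C^k(\Phi,0)\cong H^k(\Phi^0,\Phi^a)$ and the contractibility of $\Phi^0$ using only the $C^1$ regularity and $(PS)$. For the first we will use the first deformation lemma on $[a,0)$ together with excision of a small neighborhood complement. For the second we will use the deformation to $+\infty$ levels, which relies on $(PS)$ at all levels $c>0$. If $k=1$, reduced cohomology in degree $0$ still makes the argument work verbatim.
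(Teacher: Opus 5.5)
Your proof is correct and follows the same route as the standard argument behind the result the paper cites from \cite{MePe2}. Assuming $0$ is the only critical point, the second deformation lemma under $(PS)$ gives $C^k(\Phi,0)\cong H^k(\Phi^0,\Phi^0\setminus\set{0})\cong H^k(W,\Phi^a)\cong\w{H}^{k-1}(\Phi^a)=0$, which contradicts Theorem \ref{Theorem 305}; your use of the pair $(\Phi^0,\Phi^a)$ together with the contractibility of $\Phi^0$ is the same chain of isomorphisms, with the deformation of $W$ onto $\Phi^0$ made explicit.
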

\begin{corollary}\label{Corollary 307}
    Let $\Phi$ be a $C^1$-functional on $W$ that satisfies the $(PS)$ condition. Assume that $\Phi$ has a scaled local linking near the origin and that $\Phi^a$ is contractible for some $a<0$. Then $\Phi$ has a nontrivial critical point.
\end{corollary}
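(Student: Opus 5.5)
The plan is to use the standard pseudo-gradient deformation argument for critical groups. The whole proof rests on one claim: the pair $(\Phi^0\cap U,\Phi^0\cap U\setminus\{0\})$ carries a nontrivial class in degree $k-1$ coming from $\widetilde H^{k-1}(\Phi^0\cap U\setminus\{0\})$. This is combined with the hypothesis $\widetilde H^{k-1}(\Phi^a)=0$ to produce a second critical point.

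\emph{Setup.} Argue by contradiction and suppose $0$ is the only critical point of $\Phi$. Since $0$ is then an isolated critical point, Theorem \ref{Theorem 305} gives $C^k(\Phi,0)\neq 0$, where we note $\Phi(0)=0$ because $\Phi(u_0)\le 0$ for $u\in A_0$.

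\emph{Deformation to a level set.} Because $\Phi$ satisfies (PS) and has no critical values other than $0$, the second deformation lemma applies on $[a,0]$. It shows that $\Phi^0$ deformation retracts onto $\Phi^a\cup\{0\}$, and hence that $\Phi^0\setminus\{0\}$ deformation retracts onto $\Phi^a$. Excision together with this retraction identifies the critical groups with global groups:
\[
C^q(\Phi,0)\cong H^q(\Phi^0,\Phi^0\setminus\{0\})\cong H^q(\Phi^0,\Phi^a).
\]
The first step of this chain holds because $\Phi^0\cap U$ may be replaced by $\Phi^0$, as there are no other critical points to obstruct the flow.

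\emph{Exact sequence argument.} Consider the exact sequence of the pair $(\Phi^0,\Phi^a)$. Since $\Phi^0$ is contractible to $\{0\}$ by the same deformation, we have $\widetilde H^{*}(\Phi^0)=0$. Exactness then gives
\[
H^k(\Phi^0,\Phi^a)\cong \widetilde H^{k-1}(\Phi^a).
\]
By hypothesis the right-hand side is zero, so $C^k(\Phi,0)=0$. This contradicts Theorem \ref{Theorem 305}, and therefore $\Phi$ must have a nontrivial critical point.

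\emph{Main obstacle.} The delicate step is justifying the deformations when $0$ is the only critical point and the level $0$ itself is critical. Under (PS) the flow along a pseudo-gradient field either reaches $\Phi^a$ or converges to $0$. The candidate retraction of $\Phi^0$ onto $\{0\}\cup\Phi^a$ may then fail to be continuous at points whose flow lines converge to $0$. I would handle this with the standard result (e.g.\ Chang, or Perera et al.\ \cite{MR2640827}) that for an isolated critical value containing only finitely many critical points one has $H^q(\Phi^b,\Phi^a)\cong\bigoplus C^q(\Phi,u)$ for $a<c<b$ regular. Taking $b$ slightly positive, $\Phi^b$ is a deformation retract of the whole space $W$, since there are no critical values above $0$. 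It is therefore contractible, and the exact sequence of $(\Phi^b,\Phi^a)$ gives the same conclusion.
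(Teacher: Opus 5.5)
Your argument is correct in the form given in your last paragraph, but it takes a longer road than the paper. In the paper the corollary is immediate from Theorem~\ref{Theorem 306}: a contractible $\Phi^a$ has $\widetilde H^{*}(\Phi^a)=0$, in particular $\widetilde H^{k-1}(\Phi^a)=0$ for the dimension $k$ of the local linking, and that theorem does the rest. You instead reprove the content of Theorem~\ref{Theorem 306} by the standard Morse-theoretic contradiction. Suppose $\Phi$ had no critical point other than $0$. Then for small $b>0$ you get $C^k(\Phi,0)\cong H^k(\Phi^b,\Phi^a)\cong\widetilde H^{k-1}(\Phi^a)=0$, since $\Phi^b$ is a deformation retract of $W$, and this contradicts Theorem~\ref{Theorem 305}. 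Your route buys self-containedness and makes visible that only $\widetilde H^{k-1}(\Phi^a)=0$ is used. The price is that you must invoke the second deformation lemma (with upper level $+\infty$) and the identification of $H^*(\Phi^b,\Phi^a)$ with critical groups, which the paper has already packaged into Theorem~\ref{Theorem 306}.

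You should drop the main-line version in your ``Deformation'' and ``Exact sequence'' paragraphs. The asserted retraction of $\Phi^0$ onto $\Phi^a\cup\set{0}$ is not merely delicate but false. For $\Phi(x,y)=y^2-x^2$ on $\R^2$, $\Phi^0$ is connected while $\Phi^a\cup\set{0}$ has three components. The failure is at $0$ itself, not at ``points whose flow lines converge to $0$'': no point of $\Phi^0\setminus\set{0}$ flows to $0$, because $\Phi$ strictly decreases along the flow away from critical points. What is true is that $\Phi^a$ is a strong deformation retract of $\Phi^0\setminus\set{0}$, which with excision gives $C^q(\Phi,0)\cong H^q(\Phi^0,\Phi^a)$. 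Under your contradiction hypothesis $\Phi^0$ is indeed contractible, but the reason is that $W$ deforms down onto $\Phi^0$ when the critical points at level $0$ are isolated, not ``the same deformation''.

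Two small points remain. First, $\Phi(0)=0$ needs both halves of the local linking: $\Phi(0)\le 0$ from $A_0$ at $t=0$, and $\Phi(0)\ge 0$ from $B_0$ by letting $t\to0^+$. Second, the contradiction hypothesis should be that $\Phi$ has no critical point other than $0$; that $0$ is itself critical is part of what Theorem~\ref{Theorem 305} delivers.
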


\subsection{Multiplicity based on scaling}

Let $\Phi\in C^1(W,\R)$ be an even functional, i.e., $\Phi(-u)=\Phi(u)$ for all $u\in W$. Assume that $\exists c^*>0$ such that $\Phi$ satisfies the $(PS)_c$ condition for all $c\in(0,c^*)$. Let $\Gamma$ denote the group of odd homeomorphisms of $W$ that are the identity outside $\Phi^{-1}(0,c^*)$. Let $\A^*$ denote the class of symmetric subsets of $W$, and let
\begin{eqnarray*}
    \M_\rho=\set{u\in W: I_s(u)=\rho^s}=\set{u_\rho:u\in\M}
\end{eqnarray*}
for $\rho>0.$
\begin{definition}[Benci \cite{MR84c:58014}]
    The pseudo-index of $M\in\A^*$ related to $i$, $\M_\rho$, and $\Gamma$ is defined by 
    \begin{eqnarray*}
        i^*(M)=\min_{\gamma\in\Gamma} i(\gamma(M)\cap\M_\rho).
    \end{eqnarray*}
\end{definition}
We have the following multiplicity result.

\begin{theorem}[Mecuri \& Perera \cite{MePe2}]\label{m1}
    Let $A_0$ and $B_0$ be symmetric subsets of $\M$ such that $A_0$ is compact, $B_0$ is closed, and
    \begin{eqnarray}\label{m2}
        i(A_0)\geq k+m-1,\quad i(\M\backslash B_0)\leq k-1
    \end{eqnarray}
    for some $k,m\geq1$. Let $R>\rho>0$ and let
    \begin{eqnarray*}
        X&=&\set{u_t:u\in A_0, 0\leq t\leq R},\\
        A&=&\set{u_R:u\in A_0},\\
        B&=&\set{u_{\rho}:u\in B_0}.
        \end{eqnarray*}
        Assume that
        \begin{eqnarray}\label{m1-2}
            \sup_{u\in A}\Phi(u)\leq0<\inf_{u\in B}\Phi(u),\quad \sup_{u\in X}\Phi(u)<c^*.
        \end{eqnarray}
        For $j=k,\dots,k+m-1$, let
        \begin{eqnarray*}
            \A_j^*=\set{M\in\A^*:M\text{ is compact and }i^*(M)\geq j}
        \end{eqnarray*}
        and set
        \begin{eqnarray*}
            c^*_j=\inf_{M\in\A_j^*}\max_{u\in M}\Phi(u).
        \end{eqnarray*}
        Then $0<c_k^*\leq\cdots\leq c_{k+m-1}^*<c^*$, each $c_j^*$ is a critical value of $\Phi$, and $\Phi$ has $m$ distinct pairs of associated critical points.
\end{theorem}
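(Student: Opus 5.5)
This is the abstract pseudo-index result of Mercuri and Perera, so I would follow the classical Benci pseudo-index scheme adapted to scaling. The plan has three parts: a lower bound $c_k^*>0$ via the sets $B$ and $\M_\rho$; an upper bound $c^*_{k+m-1}<c^*$ via $X$; then the standard deformation and multiplicity argument in the band $(0,c^*)$, where $(PS)_c$ holds.

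\emph{Lower bound.} Fix $M\in\A^*_k$ and $\gamma\in\Gamma$. I claim $\gamma(M)\cap B\neq\emptyset$. Note that $B=\set{u_\rho:u\in B_0}\subset\M_\rho$, and that $u\mapsto u_\rho$ is an odd homeomorphism of $\M$ onto $\M_\rho$, with inverse $u\mapsto u_{1/\rho}$ by $(H_1)$, $(H_2)$ and the scaling property of $I_s$. Hence $i(\M_\rho\setminus B)=i(\M\setminus B_0)\le k-1$. If $\gamma(M)\cap\M_\rho\subset\M_\rho\setminus B$, monotonicity would give $i^*(M)\le k-1$, a contradiction. Taking $\gamma=\mathrm{id}$, every $M\in\A_k^*$ meets $B$. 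Therefore $c_k^*\ge\inf_B\Phi>0$, and monotonicity of $\A_j^*$ in $j$ gives $c_k^*\le\cdots\le c^*_{k+m-1}$.

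\emph{Upper bound.} I will show $X\in\A^*_{k+m-1}$. First, $X$ is symmetric by $(H_2)$, and compact as the continuous image of $A_0\times[0,R]$. Now let $\gamma\in\Gamma$. Since $\Phi\le0$ on $A$ and $\gamma$ is the identity outside $\Phi^{-1}(0,c^*)$, we have $\gamma|_A=\mathrm{id}$; we also have $\gamma(0)=0$ because $\Phi(0)=0$. Consider the odd homotopy $\varphi(u,\tau)=\gamma(u_{\tau R})$ on $A_0\times[0,1]$. Set $C_0=\set{w:I_s(w)\le\rho^s}$ and $C_1=\set{w:I_s(w)\ge\rho^s}$. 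Then $\varphi(\cdot,0)=0\in C_0$ and $\varphi(\cdot,1)=u_R\in C_1$ because $R>\rho$. The image is closed since it is compact. The piercing property $(i_7)$ then gives
\[
i(\gamma(X)\cap\M_\rho)\ge i(A_0)\ge k+m-1 .
\]
Hence $c^*_{k+m-1}\le\max_X\Phi<c^*$.

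\emph{Criticality and multiplicity.} Each $c_j^*\in(0,c^*)$ is a critical value by the standard argument. If not, the deformation lemma under $(PS)_{c_j^*}$ produces an odd homeomorphism $\eta$, equal to the identity outside $\Phi^{-1}(0,c^*)$, with $\eta(\Phi^{c+\varepsilon})\subset\Phi^{c-\varepsilon}$. Since $\Gamma$ is a group, $i^*(\eta(M))\ge i^*(M)$, and this contradicts the definition of $c_j^*$. If $c_j^*=\cdots=c_{j+l}^*=c$ for some $l\ge1$, then the critical set $K_c$ is compact by $(PS)_c$ and symmetric, and I claim $i(K_c)\ge l+1$. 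To see this, take a $\delta$-neighborhood $N$ with $i(N)=i(K_c)$ by $(i_4)$. Deform $\Phi^{c+\varepsilon}\setminus N$ into $\Phi^{c-\varepsilon}$, and use the pseudo-index subadditivity $i^*(\overline{M\setminus N})\ge i^*(M)-i(N)$, which follows from $(i_5)$. Choosing $M\in\A^*_{j+l}$ with $\max_M\Phi\le c+\varepsilon$ then gives a contradiction if $i(N)\le l$. Since $0\notin K_c$ (because $c>0$), index at least $2$ forces infinitely many pairs. Either way, $\Phi$ has $m$ distinct pairs of critical points.

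The main obstacle is the piercing step. One must check continuity of $\varphi$ and closedness of its image, and identify $C_0\cap C_1$ with $\M_\rho$ using the scaling property of $I_s$. Everything else is the routine Benci machinery.
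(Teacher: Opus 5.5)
Your proposal is correct and matches the argument the paper relies on. The paper gives no proof of its own but cites Mercuri--Perera, who adapt the Yang--Perera pseudo-index proof to scalings along the same lines as you: $\mathrm{id}\in\Gamma$ and the odd homeomorphism $u\mapsto u_\rho$ of $\M$ onto $\M_\rho$ make every $M\in\A_k^*$ meet $B$; the piercing property applied to $\varphi(u,\tau)=\gamma(u_{\tau R})$ with $C_0=\set{I_s\le\rho^s}$, $C_1=\set{I_s\ge\rho^s}$ gives $X\in\A^*_{k+m-1}$; and the rest is the standard Benci deformation argument (note that $\gamma(0)=0$ already follows from oddness of $\gamma$).
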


\begin{corollary}\label{m2_corollary}
    Let $A_0$ be a compact symmetric subset of $\M$ with $i(A_0)=m\geq1$, let $R>\rho>0$, and let
    \begin{eqnarray*}
        A=\set{u_R:u\in A_0},\quad X=\set{u_t:u\in A_0,0\leq t\leq R}.
    \end{eqnarray*}
    Assume that
    \begin{eqnarray*}
        \sup_{u\in A}\Phi(u)\leq0<\inf_{u\in\M_\rho}\Phi(u),\quad\sup_{u\in X}\Phi(u)<c^*.
    \end{eqnarray*}
    Then $\Phi$ has $m$ distinct pairs of critical points at levels in $(0,c^*)$.
\end{corollary}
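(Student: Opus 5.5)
We obtain Corollary \ref{m2_corollary} directly from Theorem \ref{m1}, taking $k=1$ and $B_0=\M$. The task is to check that hypotheses \eqref{m2} and \eqref{m1-2} hold with these choices. The point is that the sets $X$ and $A$ in the corollary coincide with those in Theorem \ref{m1}.

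\textbf{Checking \eqref{m2}.} We set $k=1$ and $B_0=\M$, which is a closed symmetric subset of $\M$.
\begin{itemize}
\item $\M\backslash B_0=\emptyset$, so $i(\M\backslash B_0)=0=k-1$ by $(i_1)$.
\item $A_0$ is compact and symmetric with $i(A_0)=m=k+m-1$.
\end{itemize}
So \eqref{m2} holds.

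\textbf{Identifying $B$ and checking \eqref{m1-2}.} With this choice,
\[
B=\set{u_\rho:u\in\M}=\M_\rho,
\]
which is exactly the description of $\M_\rho$ given just before the pseudo-index definition. The identity $\set{u\in W:I_s(u)=\rho^s}=\set{u_\rho:u\in\M}$ follows from $I_s(u_t)=t^sI_s(u)$ together with $(H_1)$ and $(H_3)$: one inverts via $u=(v)_{1/\rho}$. Hence $\inf_{u\in B}\Phi(u)=\inf_{u\in\M_\rho}\Phi(u)>0$. The two remaining conditions in \eqref{m1-2}, namely $\sup_A\Phi\le0$ and $\sup_X\Phi<c^*$, are assumed verbatim in the corollary.

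\textbf{Conclusion.} Theorem \ref{m1} gives critical values
\[
0<c_1^*\le\cdots\le c_m^*<c^*,
\]
and $m$ distinct pairs of associated critical points. These lie at levels in $(0,c^*)$, which is the claim.

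\textbf{Main obstacle.} There is essentially none. The only points needing care are that the degenerate choice $B_0=\M$ is permitted (it is: closed, symmetric, nonempty) and that $B$ really equals $\M_\rho$. The latter rests on the scaling properties already established for $I_s$.
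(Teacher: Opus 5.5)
Your proposal is correct and is exactly the intended argument: the paper gives no separate proof of this corollary and presents it as an immediate consequence of Theorem~\ref{m1}. It follows by the specialization $k=1$, $B_0=\M$, so that $i(\M\setminus B_0)=i(\emptyset)=0$ and $B=\M_\rho$. Your check that $\{u_\rho:u\in\M\}=\{u\in W: I_s(u)=\rho^s\}$, using $I_s(u_t)=t^sI_s(u)$ together with $(H_1)$ and $(H_3)$, is correct, though the paper simply builds this identity into its definition of $\M_\rho$.
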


\section{Proof of the main results} \label{Section 4}

In this section we present the proof of our main results, which also rely on some preliminary facts of independent interest on the structure of problem \eqref{101}, which we collect in the following Lemma \ref{lemma1bis} and Lemma \ref{Lemma1}. With the abstract results of the preceding section at hand, we consider now in particular $W=E^{\alpha,\,p}_{\text{rad}}(\R^N)$ with the norm
\[
\norm{u} = \left[\int_{\R^N} |\nabla u|^2\, dx + \left(\int_{\R^N} \abs{I_{\alpha/2} \star |u|^p}^2\, dx\right)^{1/p}\right]^{1/2}.
\]

On $W$ we define a scaling as follows. Let $\sigma = (2 + \alpha)/2(p - 1)$ and consider the map
\[
E^{\alpha,\,p}_{\text{rad}}(\R^N) \times [0,\infty) \to E^{\alpha,\,p}_{\text{rad}}(\R^N), \quad (u,t) \mapsto u_t
\]
given by
\begin{equation}\label{scaling1}
    u_t(x) = t^\sigma u(tx)
\end{equation}
if $1/p> (N - 2)/(N + \alpha)$ and
\begin{equation}\label{scaling2}
    u_t(x) = \begin{cases}
t^{- \sigma} u(x/t) & \text{if } t > 0\\[5pt]
0 & \text{if } t = 0
\end{cases}
\end{equation}
if $1/p< (N - 2)/(N + \alpha)$.

As in \cite{MePe2}, it is easy to see that the mapping $(u,t)\mapsto u_t$ is continuous and satisfies $(H_1)$-$(H_5)$ with
\[
s=\begin{cases}
    \frac{p(2-N)+N+\alpha}{p-1} & \text{if } 1/p> (N - 2)/(N + \alpha),\\[5pt]
    -\frac{p(2-N)+N+\alpha}{p-1} & \text{if }1/p< (N - 2)/(N + \alpha).
\end{cases}
\]

In this setting, the operators $A_2$ and $B_s$ are given by
\[
A_s(u)v=\int_{\R^N}\nabla u\cdot\nabla vdx+\int_{\R^N}(I_{\alpha/2}\star|u|^p)(I_{\alpha/2}\star|u|^{p-2}uv)dx,
\]
and
\[
B_s(u)v=\int_{\R^N}|u|^{q-2}uvdx,
\]
where $q = 2\, (2p + \alpha)/(2 + \alpha)$. Note that $A_s,B_s\in\A_s$, and satisfy $(H_6)$, $(H_8)$ and $(H_9)$. The following lemma guaranties that condition $(H_7)$ is also satisfied.

\begin{lemma}\label{lemma1bis}
    Every sequence $(u_j)$ in $E^{\alpha,\,p}(\R^N)$ weakly converging to $u$ and such that $(A_s(u_j)-A_s(u))(u_j-u)\to0,$ strongly converges to $u$.
\end{lemma}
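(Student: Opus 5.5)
We will show that the monotonicity expression $(A_s(u_j)-A_s(u))(u_j-u)$ controls both pieces of the norm. Write $v_j=I_{\alpha/2}\star|u_j|^p$, $v=I_{\alpha/2}\star|u|^p$, and $w_j=I_{\alpha/2}\star(|u_j|^{p-2}u_j)$-type expressions. We split
\[
(A_s(u_j)-A_s(u))(u_j-u)=\int_{\R^N}|\nabla(u_j-u)|^2\,dx+D_j,
\]
where
\[
D_j=\int_{\R^N}\big[(I_\alpha\star|u_j|^p)|u_j|^{p-2}u_j-(I_\alpha\star|u|^p)|u|^{p-2}u\big](u_j-u)\,dx .
\]
The first goal is to show $D_j\ge o(1)$, or more precisely that $D_j$ is bounded below by a nonnegative quantity up to a vanishing error. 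Together with the hypothesis, this gives $\nabla u_j\to\nabla u$ in $L^2$ and $\liminf D_j\le 0$.

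For the nonlocal part we use the convexity of $J(u)=\frac1{2p}\int|I_{\alpha/2}\star|u|^p|^2$. This is the functional $\frac1{2p}\|\,|u|^p\|_{\dot H^{-\alpha/2}}^2$, and $u\mapsto \| |u|^p \|^{1/p}$ is a norm-like convex functional, since $\big(\int\int |u(x)|^p|u(y)|^p|x-y|^{\alpha-N}\big)^{1/2p}$ is a norm by the positive-definiteness of the Riesz kernel; see \cite{MR3568051}. Hence $J$ is convex and $J'(u_j)-J'(u)$ is monotone, so $D_j\ge0$. We then use the standard identity
\[
D_j=J'(u_j)(u_j-u)-J'(u)(u_j-u).
\]
The term $J'(u)(u_j-u)\to0$ by weak convergence, since $J'(u)\in W^*$. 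Convexity gives $J(u)\ge J(u_j)+J'(u_j)(u-u_j)$, hence $J'(u_j)(u_j-u)\ge J(u_j)-J(u)$. Combining these yields
\[
\limsup_j J(u_j)\le J(u)+\limsup_j D_j\le J(u).
\]
The bound $\limsup_j D_j\le0$ holds because the gradient part is nonnegative and the total tends to zero. Consequently $\int|\nabla(u_j-u)|^2\to0$ as well.

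Weak lower semicontinuity of the convex continuous $J$ gives $J(u)\le\liminf J(u_j)$, so $J(u_j)\to J(u)$. Since the gradient part converges strongly, $\|u_j\|\to\|u\|$. The space $E^{\alpha,p}$ is uniformly convex (the norm is an $\ell^2$-combination of the $\dot H^1$ seminorm and a uniformly convex gauge; see \cite{MR3568051,MR2679375}). Hence weak convergence together with norm convergence gives $u_j\to u$. If uniform convexity of the second component is not directly available, we instead argue that $u_j\to u$ a.e. up to a subsequence, using $\nabla u_j\to\nabla u$ and the local compactness from Theorem \ref{Theorem 1}. Then $|u_j|^p\to|u|^p$ a.e. and $\||u_j|^p\|_{\dot H^{-\alpha/2}}\to\||u|^p\|_{\dot H^{-\alpha/2}}$. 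A Brezis--Lieb-type splitting for the Coulomb term (available in \cite{MR3568051}, at least when $\alpha\le2$ or with the $q^{\rm loc}_{CS}$ condition) then gives $\int|I_{\alpha/2}\star(|u_j-u|^p)|^2\to0$ through the inequality $\big||a|^p-|b|^p-|a-b|^p\big|$ estimates.

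The main obstacle is this last step: passing from convergence of the Coulomb energies to convergence of the Coulomb part of $\|u_j-u\|$. The cleanest route is to invoke uniform convexity (or the Radon--Riesz property) of $E^{\alpha,p}$ established in \cite{MR3568051}, and that is what we try first. The sequence is bounded by weak convergence, so all the lower-order terms above are controlled.
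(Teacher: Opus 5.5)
Your proof is correct, but it gets convergence of the Coulomb energies by a different mechanism than the paper. The final step is the same in both: norm convergence plus uniform convexity (or the nonlocal Brezis--Lieb inequality) of \cite{MR3568051}.

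\textbf{The paper's route.} The paper expands $(A_s(u_j)-A_s(u))(u_j-u)$ exactly. It then applies the pointwise Young inequality $|u_j|^{p-1}|u|\le\frac{p-1}{p}|u_j|^p+\frac1p|u|^p$, and its symmetric counterpart, inside the convolutions; this is legitimate because $I_{\alpha/2}>0$. The result is the explicit lower bound
\[
(A_s(u_j)-A_s(u))(u_j-u)\ \ge\ \int_{\R^N}|\nabla(u_j-u)|^2\,dx+\frac1p\int_{\R^N}\big(I_{\alpha/2}\star|u_j|^p-I_{\alpha/2}\star|u|^p\big)^2\,dx .
\]
This gives $L^2$-convergence of the potentials $I_{\alpha/2}\star|u_j|^p$ themselves, and in particular of their norms.

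\textbf{Your route.} You get only $D_j\ge0$, from monotonicity of $J'$. You then recover $J(u_j)\to J(u)$ from the subgradient inequality and weak lower semicontinuity of the convex continuous $J$.

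\textbf{Comparison.}
\begin{itemize}
\item Your argument is the abstract ``convex $C^1$ term plus Kadec--Klee implies $(S_+)$'' scheme. It would transfer verbatim to any convex nonlocal term.
\item It does, however, rest on the convexity of $J$, i.e.\ on the triangle inequality for $u\mapsto\|I_{\alpha/2}\star|u|^p\|_2^{1/p}$. That inequality comes from positivity of the kernel $I_{\alpha/2}$ (lattice monotonicity of $f\mapsto\|I_{\alpha/2}\star f\|_2$ on $f\ge0$) combined with convexity of $t\mapsto t^p$, a $p$-convexification argument. It does not come from positive-definiteness, as you state, though the fact itself is correct and is established in \cite{MR3568051}.
\item The paper's Young-inequality estimate is more elementary and quantitative. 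Applied to an arbitrary pair $w,u$, it proves $(J'(w)-J'(u))(w-u)\ge0$ directly, so convexity of $J$ never has to be invoked. It also yields the stronger intermediate conclusion that the potentials converge in $L^2$.
\end{itemize}
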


\begin{proof}
We have
\begin{align*}
      o(1)+  (A_s(u_j)-A_s(u))(u_j-u)&=\int_{\R^N}|\nabla u_j-\nabla u|^2dx+\int_{\R^N} (I_{\alpha/2}\star |u_j|^p)^2dx\\
        &-\int_{\R^N} (I_{\alpha/2}\star |u_j|^p) (I_{\alpha/2}\star |u_j|^{p-2}u_ju)dx\\
        &- \int_{\R^N} (I_{\alpha/2}\star |u|^p)(I_{\alpha/2}\star |u|^{p-2}uu_j)dx\\
        &+\int_{\R^N} (I_{\alpha/2}\star |u|^p)^2dx\\
        &\geq \int_{\R^N}|\nabla u_j-\nabla u|^2dx +\frac{1}{p}\int_{\R^N}\Big(I_{\alpha/2}\star|u_j|^p-I_{\alpha/2}\star|u|^p\Big)^2,
 \end{align*}
    where we have only used Young's inequality to estimate the terms $|u_j|^{p-2}u_ju$ and $|u|^{p-2}uu_j.$ The conclusion follows then by
    uniform convexity \cite[Proposition 2.9]{MR3568051} or by the Brezis-Lieb inequality \cite[Proposition 4.1]{MR3568051}. 
\end{proof}

In this setting we have
\[
I_s(u)=\frac{1}{2}\int_{\R^N}|\nabla u|^2dx+\frac{1}{2p}\int_{\R^N}|I_{\alpha/2}\star|u|^p|^2dx,
\]
\[
\M_s=\set{u\in E^{\alpha,\,p}_{\text{rad}}(\R^N):I_s(u)=1},
\]
\[
J_s(u)=\frac{1}{q}\int_{\R^N}|u|^qdx,
\]
\[
\widetilde{\Psi}(u)=\dfrac{1}{J_s(u)}.
\]
It is easy to see that $(H_{10})$ and $(H_{11})$ hold. Condition $(H_{12})$ can be checked as follows. If $u$ is a solution of $\eqref{103}$, it implies that, after testing against $u,$
\[
\om|\nabla u|^2dx+\col{u}=\lambda\sca{u},
\]
and $u$ satisfies the Pohozaev type identity \cite[Proposition 5.5]{MR3568051},
\[
\frac{N-2}{2}\om|\nabla u|^2dx+\frac{N+\alpha}{2p}\col{u}=\lambda\frac{N}{q}\sca{u},
\]
which implies, putting togeteher the last two equations, that
\[
\frac{1}{2}\om|\nabla u|^2x+\frac{1}{2p}\col{u}=\frac{\lambda}{q}\sca{u}.
\]
This shows that $(H_{12})$ also holds.

Let us set
\[
\w{f}(u)v=\om f(|x|,u)vdx,\quad\text{and}\quad\w{g}(u)v=\om g(|x|,u)vdx.
\]

We will also need the following asymptotic properties.

\begin{lemma}\label{Lemma1}
    We have the following asymptotic estimates on $\w{f}$ and $\w{g}$:
    \begin{enumroman}
        \item\label{Lemma1-1} If $f$ satisfies \eqref{102} with $\p<q_1<q_2<q$ if $1/p> (N - 2)/(N + \alpha)$ or $q<q_1<q_2<\p$ if $1/p< (N - 2)/(N + \alpha)$, then
        \[
        \w{f}(u_t)v_t=o(t^s)\norm{v}\quad\text{as }t\to\infty
        \]
        uniformly in $u$ on bounded sets for all $v\in\e$.
        \item\label{Lemma1-2} If $g$ satisfies $\eqref{105}$ if $1/p> (N - 2)(N + \alpha)$, or \eqref{117} if $1/p< (N - 2)/(N + \alpha)$, then
        \[
        \w{g}(u_t)v_t=o(t^s)\norm{v}\quad\text{as }t\to0
        \]
        uniformly in $u$ on bounded sets for all $v\in\e$.
        \item\label{Lemma1-3} If $g$ satisfies \eqref{109} if $1/p> (N - 2)/(N + \alpha)$, or \eqref{121} if $1/p< (N - 2)/(N + \alpha)$, then
        \[
        \w{g}(u_t)v_t=o(t^s)\norm{v}\quad\text{as }t\to\infty
        \]
        uniformly in $u$ on bounded sets for all $v\in\e$.
    \end{enumroman}
\end{lemma}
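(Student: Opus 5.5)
The plan is to reduce all three estimates to one scaling computation for a power term $|t|^{r-1}$, plus a separate treatment of the inhomogeneous term $a(x)$ or $h(x)$. Take first the case $1/p>(N-2)/(N+\alpha)$, with $u_t(x)=t^\sigma u(tx)$. For $r>1$ the change of variables $y=tx$ gives
\[
\int_{\R^N}|u_t|^{r-1}|v_t|\,dx = t^{r\sigma-N}\int_{\R^N}|u|^{r-1}|v|\,dy .
\]
By H\"older's inequality and Theorem \ref{Theorem 1}, the right-hand integral is at most $C\|u\|^{r-1}\|v\|$ whenever $r$ lies in the embedding range. This holds for $r=q_1,q_2$ under \eqref{102} and for $r=q_3,q_4$ under \eqref{105} and \eqref{109}, since all these exponents lie in $(q_{\textrm{rad}},2^\ast)$. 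The bound is therefore uniform for $u$ in bounded sets.

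The key exponent identity is $q\sigma-N=s$. Indeed, $q\sigma=(2p+\alpha)/(p-1)$, so $q\sigma-N=(2p+\alpha-Np+N)/(p-1)=s$, which agrees with the formula for $s$. Hence $t^{r\sigma-N}=t^{s}\,t^{(r-q)\sigma}$.

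\begin{enumroman}
\item For \ref{Lemma1-1}, $r<q$ gives $t^{(r-q)\sigma}\to0$ as $t\to\infty$, so each power term is $o(t^s)\|v\|$.
\item For \ref{Lemma1-2}, $r>q$ gives $t^{(r-q)\sigma}\to0$ as $t\to0$. This settles \eqref{105}, since $g$ consists of power terms only.
\item For \ref{Lemma1-3}, the $|t|^{q_3-1}$ part of \eqref{109} is handled exactly as in \ref{Lemma1-1}.
\end{enumroman}

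The main obstacle is the inhomogeneous term $\int a(x)|v_t|\,dx$ in \ref{Lemma1-1} and $\int h|v_t|$ in \ref{Lemma1-3}. Since $v_t(x)=t^\sigma v(tx)$, H\"older's inequality with $r'$ conjugate to $r$ gives
\[
\int_{\R^N} a\,|v_t|\,dx\le \|a\|_r\,\|v_t\|_{r'} = \|a\|_r\, t^{\sigma-N/r'}\|v\|_{r'} .
\]
The assumption on $r$ places $r'$ in $(q_{\textrm{rad}},2^\ast]$, so $\|v\|_{r'}\le C\|v\|$ by Theorem \ref{Theorem 1}. It remains to check $\sigma-N/r'<s=q\sigma-N$, that is, $N/r>(1-q)\sigma$. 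This is automatic because $q>1$ and $\sigma>0$ make the right side negative. This step only needs the sign check; the case $r'=2^\ast$ is allowed by the closed endpoint in the embedding.

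The case $1/p<(N-2)/(N+\alpha)$ is symmetric. Now $u_t(x)=t^{-\sigma}u(x/t)$, the substitution gives the factor $t^{N-r\sigma}$, and $s=N-q\sigma$, so the factor equals $t^{s}\,t^{(q-r)\sigma}$. The roles of $r<q$ and $r>q$ swap, matching the swapped hypotheses \eqref{117} and \eqref{121}, where now $q_3,q_4<q$ or $q<q_1$. For the $h$ term the exponent becomes $N/r'-\sigma$, and one needs $N/r'-\sigma<N-q\sigma$, i.e.\ $N/r>(q-1)\sigma$. I expect this inequality to be the genuinely delicate point of the proof. I would verify it from the range $r'\in[2^\ast,q_{\textrm{rad}})$, using $q\in(2^\ast,q_{\textrm{rad}})$ and the explicit formulas for $q$ and $\sigma$; the bound $h\in L^\infty$ may be needed to interpolate to a better exponent if the direct check fails.
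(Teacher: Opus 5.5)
Your overall scheme matches the paper's: change variables under the scaling, apply H\"older with the radial embeddings, and compare the resulting power of $t$ with $s=q\sigma-N$. Your treatment of the pure power terms is correct.

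\textbf{The gap: a sign error in the inhomogeneous term.} It sits exactly in the step you single out as the main obstacle: the term $\int a\,|v_t|\,dx$ (and $\int h\,|v_t|\,dx$) in the case $1/p>(N-2)/(N+\alpha)$.
\begin{itemize}
\item The required inequality $\sigma-N/r'<q\sigma-N$ rearranges to $N-N/r'<(q-1)\sigma$, i.e.\ $N/r<(q-1)\sigma$. It does not give $N/r>(1-q)\sigma$; a sign was flipped.
\item The correct condition is not automatic. Since $(q-2)\sigma=2$, one has $(q-1)\sigma=\sigma+2$. For example, $N=5$, $\alpha=2$, $p=2$ lies in this regime and gives $\sigma=2$, $q=3$, $s=1$. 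The condition then reads $r'<5$, which fails when $r$ is close to $1$.
\item So your remark that ``this step only needs the sign check'' is wrong. What is actually used is the upper bound $r'\le 2^\ast$ in the hypothesis on $a$ (resp.\ $h$), together with $s>0$.
\end{itemize}
The fix is one line. We have $\sigma-N/2^\ast=\sigma-(N-2)/2=(q\sigma-N)/2=s/2$, hence $\sigma-N/r'\le s/2<s$ for every admissible $r'$. Moreover $s>0$ is exactly the condition $1/p>(N-2)/(N+\alpha)$. For $N=2$, read $N/2^\ast=0$.

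\textbf{The case $p>(N+\alpha)/(N-2)$.} Here you derive the correct condition $N/r>(q-1)\sigma$, but you leave it unverified and suggest that $h\in L^\infty$ might be needed. It is not needed. The same identity gives $N/2^\ast-\sigma=(N-2-2\sigma)/2=s/2$, where now $s=N-q\sigma>0$. Hence $N/r'-\sigma\le s/2<s$ for all $r'\ge 2^\ast$.

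With these two checks inserted, your proof is complete and coincides with the paper's. Incidentally, the threshold written in the paper's own one-line check, $N/(N-\sigma(p-1))$, is also misstated. The correct bound is $r'<N/(N-2-\sigma)$ when $N>\sigma+2$ (and there is no restriction otherwise). This bound exceeds $2^\ast$ precisely because $s>0$.
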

\begin{proof}
    We will prove only \ref{Lemma1-1} as the other cases are similar.

    First suppose $1/p> (N - 2)/(N + \alpha)$ and that $f$ satisfies \eqref{102} with $\p<q_1<q_2<q$. In this case the scaling is given by \eqref{scaling1}. Note that
    \[
    \om|a||v_t|dx\leq t^\sigma\seq{\om|a|^rdx}^{1/r}\seq{\om|v(tx)|^{r^\prime}dx}^{1/r^\prime}=t^{\sigma-N/r^\prime}|a|_r|v|_{r^\prime}
    \]
    This together with \eqref{102} gives
   \begin{eqnarray*}
        |\w{f}(u_t)v_t|\leq a_1\om|u_t|^{q_1-1}|v_t|dx+a_2\om|u_t|^{q_2-1}|v_t|dx+\om|a||v_t|dx\\
       \leq a_1t^{\sigma q_1-N}|u|_{q_1}^{q_1-1}|v|_{q_1}+a_2t^{\sigma q_2-N}|u|_{q_2}^{q_2-1}|v|_{q_2}+t^{\sigma-N/r^\prime}|a|_r|v|_{r^\prime}
   \end{eqnarray*}
   where $1/r+1/r^\prime=1$. Now, the condition that $\sigma-N/r^\prime-s<0$ is equivalent to $N/(N-\sigma(p-1))>r^\prime$, which is satisfied when $1/p> (N - 2)/(N + \alpha)$, as $2^*>N/(N-\sigma(p-1))$. Since $q_1,q_2<q$, we have $\sigma q_i-N-s<0$, $i=1,2$, and this completes the proof.

\end{proof}

\subsection{Proof of Theorem \ref{Theorem 2}, Theorem \ref{Theorem 3}, Theorem \ref{Theorem 11} and Theorem \ref{Theorem 12}}

\begin{proof}[Proof of Theorem \ref{Theorem 2} and Theorem \ref{Theorem 11}]
    Since $\eqref{3-3}$ holds by Lemma \ref{Lemma1} \ref{Lemma1-1}, the conclusion follows from Theorem \ref{Theorem 302}.
\end{proof}

\begin{proof}[Proof of Theorem \ref{Theorem 3} and \ref{Theorem 12}]
    Since \eqref{3-3} and \eqref{3-5} hold by Lemma \ref{Lemma1} \ref{Lemma1-1} and \ref{Lemma1-2}, respectively, the conclusion follows from Theorem \ref{Theorem 303}.
\end{proof}

\subsection{Proof of Theorem \ref{Theorem 4} and Theorem \ref{Theorem 13}}

The variational functional associated to equation \eqref{101} is given by
\[
\Phi(u)=I_s(u)-\lambda J_s(u)-\w{G}(u),\quad u\in\e,
\]
where
\[
\w{G}(u)=\om G(|x|,u)dx.
\]

We observe that the following lemma (proved in \cite[Lemma 3.5]{MePe2} in a special case) holds. Since the proof would follow the same lines as in \cite{MePe2}, we leave it out.
\begin{lemma}\label{Lemma2}
    Assume $f$ satisfies \eqref{102} with $\p<q_1<q_2<q$, \eqref{104} and \eqref{105} hold if $1/p> (N - 2)/(N + \alpha)$, and that $f$ satisfies \eqref{102} with $q<q_1<q_2<\p$, \eqref{116} and \eqref{117} hold if $1/p< (N - 2)/(N + \alpha)$. Then $\Phi$ has a scaled local linking near the origin in dimension $k$ in each of the following cases:
    \begin{enumroman}
        \item\label{Lemma2-1} $\lambda_k<\lambda\leq\lambda_{k+1}$ and $G(|x|,t)<0$ for a.a. $x\in\R^N$ and all $t\in\R\backslash\set{0}$,
       \item\label{Lemma2-2} $\lambda_k\leq\lambda<\lambda_{k+1}$ and $G(|x|,t)\geq0$ for a.a. $x\in\R^N$ and all $t\in\R$.
    \end{enumroman}
\end{lemma}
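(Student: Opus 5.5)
We follow the scheme of \cite[Lemma 3.5]{MePe2} and choose the sets in Definition \ref{local linking} in terms of sublevel and superlevel sets of $\w{\Psi}$ on $\M=\M_s$. In case \ref{Lemma2-1} we take
\[
A_0=\w{\Psi}^{\lambda_k},\qquad B_0=\w{\Psi}_{\lambda_{k+1}},
\]
and in case \ref{Lemma2-2} we take
\[
A_0=\w{\Psi}^{\lambda_k},\qquad B_0=\w{\Psi}_{\lambda_{k+1}}.
\]
Both sets are closed and symmetric. If $\lambda_k=\lambda_{k+1}$ in case \ref{Lemma2-1}, we instead pick $B_0=\w{\Psi}_{\lambda}$ with $\lambda$ slightly adjusted. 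In either case Theorem \ref{Theorem 301} \ref{301-3} gives $i(A_0)=i(\M\backslash B_0)=k$, using an intermediate level $\mu\in(\lambda_k,\lambda_{k+1})$ where needed. When $\lambda_k<\lambda_{k+1}$ the sets $A_0$ and $B_0$ are disjoint.

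The core computation uses the scaling identities $I_s(u_t)=t^s$ for $u\in\M$ and $J_s(u_t)=t^sJ_s(u)$. These give
\[
\Phi(u_t)=t^s\Big(1-\frac{\lambda}{\w{\Psi}(u)}\Big)-\w{G}(u_t).
\]
By Lemma \ref{Lemma1} \ref{Lemma1-2}, and integrating $\w{g}$ along rays, we get $\w{G}(u_t)=o(t^s)$ as $t\to0$, uniformly for $u\in\M$, since $\M$ is bounded.

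In case \ref{Lemma2-1}, take $u\in A_0$. Then $\w{\Psi}(u)\le\lambda_k<\lambda$, so the bracket is at most $1-\lambda/\lambda_k<0$. The sign condition on $G$ is not needed for $A_0$: for small $t$ the $o(t^s)$ term is absorbed, so $\Phi(u_t)\le0$ for $0\le t\le\rho$. For $u\in B_0$ we have $\w{\Psi}(u)\ge\lambda_{k+1}\ge\lambda$, so the bracket is $\ge0$. Since $G<0$ off $0$ and $u_t\ne0$, we get $-\w{G}(u_t)>0$, hence $\Phi(u_t)>0$ for $0<t\le\rho$.

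Case \ref{Lemma2-2} is symmetric. On $A_0$ we have $\w{\Psi}(u)\le\lambda_k\le\lambda$, so the bracket is $\le0$ and $-\w{G}\le0$, giving $\Phi(u_t)\le0$ for all $t$. On $B_0$ the bracket is at least $1-\lambda/\lambda_{k+1}>0$, and the $o(t^s)$ term is absorbed for $t$ small.

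The main obstacle is the uniformity of $\w{G}(u_t)=o(t^s)$ on $\M$. In the case $1/p<(N-2)/(N+\alpha)$ the scaling is different and $s$ changes sign in its formula, so we will check that the exponents $\sigma q_i-N-s$ (respectively their analogues for the scaling \eqref{scaling2}) are positive for $q_3,q_4$ on the correct side of $q$. This is exactly the computation behind Lemma \ref{Lemma1} \ref{Lemma1-2}, which we take as given.

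A secondary point is the degenerate situation $\lambda_k=\lambda_{k+1}=\lambda$ in the borderline cases. Disjointness and the index identity are then handled by choosing $B_0=\w{\Psi}_{\mu}$ (respectively $A_0=\w{\Psi}^{\mu}$) for suitable $\mu$ in the open gap adjacent to $\lambda$, exactly as in \cite{MePe2}.
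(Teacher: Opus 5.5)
Your proposal is correct and takes essentially the same route as the proof the paper omits (the one in Mercuri--Perera that it refers to): take $A_0=\w{\Psi}^{\lambda_k}$ and $B_0=\w{\Psi}_{\lambda_{k+1}}$, write $\Phi(u_t)=t^s\big(1-\lambda/\w{\Psi}(u)\big)-\w{G}(u_t)$ with $\w{G}(u_t)=o(t^s)$ as $t\to0$ uniformly on $\M_s$, and use the sign of $G$ on the side where the bracket can vanish. Your discussion of a degenerate case $\lambda_k=\lambda_{k+1}$ is unnecessary: each hypothesis ($\lambda_k<\lambda\le\lambda_{k+1}$ or $\lambda_k\le\lambda<\lambda_{k+1}$) already forces $\lambda_k<\lambda_{k+1}$, so $A_0$ and $B_0$ are always disjoint and Theorem \ref{Theorem 301} \ref{301-3} applies directly with any level in $(\lambda_k,\lambda_{k+1})$.
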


We will also need the following proposition, which was proved in \cite{perera1998critical}.
\begin{proposition}\label{Proposition 4-1}
    Let $\Phi$ be a $C^1$-functional on a Banach space $W$ that is bounded from below and satisfies the $(PS)$ condition. Assume that $0$ is a critical point of $\Phi$ with $\Phi(0)=0$ and $C^k(\Phi,0)\neq0$ for some $k\geq1$. Then $\Phi$ has a critical point $u_1\neq0$ with either $\Phi(u_1)<0$ and $C^{k-1}(\Phi,u_1)\neq0$, or $\Phi(u_1)>0$ and $C^{k+1}(\Phi,u_1)\neq0$.
\end{proposition}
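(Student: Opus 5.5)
The approach is Morse-theoretic, using exact sequences of pairs of sublevel sets $\Phi^c=\set{u\in W:\Phi(u)\le c}$ in Alexander–Spanier cohomology. We argue by contradiction and may assume $\Phi$ has only finitely many critical points, since otherwise there is nothing to prove (this reduction is the customary one in \cite{perera1998critical}). By perturbing the level normalisation if necessary, we also assume $0$ is the only critical point at level $0$. Choose $\varepsilon>0$ so that $0$ is the only critical value in $[-\varepsilon,\varepsilon]$. The second deformation lemma, which uses $(PS)$, then gives
\[
C^k(\Phi,0)\cong H^k(\Phi^{\varepsilon},\Phi^{-\varepsilon})\neq0 .
\]

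We apply the exact sequence of the pair $(\Phi^{\varepsilon},\Phi^{-\varepsilon})$ in reduced cohomology:
\[
\w{H}^{k-1}(\Phi^{-\varepsilon})\to H^k(\Phi^{\varepsilon},\Phi^{-\varepsilon})\to \w{H}^k(\Phi^{\varepsilon}).
\]
Fix a nonzero class in the middle group. Either it comes from a nonzero class in $\w{H}^{k-1}(\Phi^{-\varepsilon})$, or its image in $\w{H}^{k}(\Phi^{\varepsilon})$ is nonzero. We treat these two cases separately.

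\emph{Case 1: $\w{H}^{k-1}(\Phi^{-\varepsilon})\neq0$.} Since $\Phi$ is bounded below, $\Phi^a=\emptyset$ for $a<\inf\Phi$. Let $c_1<\dots<c_l<-\varepsilon$ be the finitely many critical values below $-\varepsilon$, and pick regular levels $a_0<c_1<a_1<\dots<c_l<a_l=-\varepsilon$. By deformation, $H^*(\Phi^{a_i},\Phi^{a_{i-1}})$ is the direct sum of the critical groups of the critical points at level $c_i$. Suppose every critical point below $-\varepsilon$ had $C^{k-1}=0$. The exact sequences of the pairs $(\Phi^{a_i},\Phi^{a_{i-1}})$ would then give $\w{H}^{k-1}(\Phi^{a_i})=0$ inductively in $i$, and hence $\w{H}^{k-1}(\Phi^{-\varepsilon})=0$, a contradiction. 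So some $u_1$ with $\Phi(u_1)<0$ has $C^{k-1}(\Phi,u_1)\ne0$.

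\emph{Case 2: $\w{H}^{k}(\Phi^{\varepsilon})\neq0$.} Since $\Phi$ is bounded below and satisfies $(PS)$, $W$ deformation retracts onto $\Phi^b$ for $b$ above all critical values. As $W$ is contractible, $\w{H}^*(W)=0$. The sequence of the pair $(W,\Phi^{\varepsilon})$ then yields $H^{k+1}(W,\Phi^{\varepsilon})\cong\w{H}^k(\Phi^{\varepsilon})\neq0$. Filtering $(W,\Phi^{\varepsilon})$ through the finitely many critical levels above $\varepsilon$ and arguing as in Case 1 produces a critical point $u_1$ with $\Phi(u_1)>0$ and $C^{k+1}(\Phi,u_1)\neq0$.

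I expect the main obstacle to be the bookkeeping in the inductive step. One must check that the exact sequences propagate vanishing correctly in the right degree, and that reduced versus unreduced cohomology is handled properly. The delicate situation is $k=1$, where $\w{H}^0(\Phi^{-\varepsilon})\ne0$ means $\Phi^{-\varepsilon}$ is disconnected. There I would use that the lowest nonempty sublevel set contains a minimizer, and that each new component appearing as the level increases creates a local minimum with $C^0\ne0$. A second point to handle is the reduction to finitely many critical points with $0$ alone at level $0$, which is needed to identify the relative groups with direct sums of critical groups.
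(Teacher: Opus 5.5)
Your proposal is the standard Morse-theoretic argument behind the result, which the paper quotes from \cite{perera1998critical} without proof. You split $H^k(\Phi^{\varepsilon},\Phi^{-\varepsilon})\neq0$ through the exact sequence, which is equivalent to the triple $(W,\Phi^{\varepsilon},\Phi^{-\varepsilon})$ since $\w{H}^*(W)=0$. You then apply the weak Morse inequalities below $-\varepsilon$ and above $\varepsilon$. This is the right route, and it works.

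The step that does not work as written is ``perturbing the level normalisation'' so that $0$ is the only critical point at level $0$. Adding a constant cannot separate $0$ from other critical points at level $0$, and any other perturbation changes the problem. You do not need it, though. With finitely many critical points, $H^k(\Phi^{\varepsilon},\Phi^{-\varepsilon})\cong\bigoplus_{u\in K_0}C^k(\Phi,u)$, where $K_0$ is the set of critical points at level $0$. This sum contains $C^k(\Phi,0)\neq0$, and $u_1$ is produced at a level below $-\varepsilon$ or above $\varepsilon$ anyway. You already use this direct-sum decomposition at the other critical levels.

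Two smaller points. First, run the Case~1 induction in unreduced cohomology, starting from $H^{k-1}(\Phi^{a_0})=H^{k-1}(\emptyset)=0$; a nonzero reduced class is a nonzero unreduced one. Then $k=1$ needs no special treatment. Second, in Case~2 the induction should use the triples $(\Phi^{b_i},\Phi^{b_{i-1}},\Phi^{\varepsilon})$ rather than pairs.
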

\begin{proof}[Proof of Theorem \ref{Theorem 4} and Theorem \ref{Theorem 13}]
    By Remark \ref{remark1}, it follows that $\Phi$ satisfies the $(PS)$ condition, and it has a minimizer $u_0$. If $u_0$ is not an isolated minimizer, then $\Phi$ must have infinitely many critical points, so we may assume that $u_0$ is isolated. Then the critical groups of $\Phi$ at $u_0$ are given by
    \begin{equation}\label{Th 23}
        C^l(\Phi,u_0)\approx\delta_{l0}\Z_2,
    \end{equation}
    (see \cite[Example 4.1]{chang2012infinite}).
    
    \ref{Theorem 4-1}. If $\lambda>\lambda_1$, then $\lambda_k<\lambda\leq\lambda_{k+1}$ for some $k\geq1$. By Lemma \ref{Lemma2} \ref{Lemma2-1}, $\Phi$ has a scaled local linking near the origin in dimension $k$. Then $C^k(\Phi,0)\neq0$ by Theorem \ref{Theorem 305}. Since $C^k(\Phi,u_0)=0$ by \eqref{Th 23}, it follows that $u_0\neq0$.

    \ref{Theorem 4-2}. If $\lambda>\lambda_2$, then by the same reason as above, we have $C^k(\Phi,0)\neq0$ for some $k\geq2$. Proposition \ref{Proposition 4-1} gives a critical point $u_1\neq0$ with either $\Phi(u_1)<0$ and $C^{k-1}(\Phi,u_1)\neq0$, or $\Phi(u_1)>0$ and $C^{k+1}(\Phi,u_1)\neq0$. Since $k\geq2$, $C^{k-1}(\Phi,u_0)=C^{k+1}(\Phi,u_0)=0$ by \eqref{Th 23}, so $u_1\neq u_0$.
\end{proof}

\subsection{Proof of Theorem \ref{Theorem 5} and Theorem \ref{Theorem 14}}
\begin{proof}[Proof of Theorem \ref{Theorem 5} and Theorem \ref{Theorem 14}]
    Since \eqref{3-7} holds from Lemma \ref{Lemma1} \ref{Lemma1-3}, the desired conclusion follows from Theorem \ref{Theorem 304}.
\end{proof}

\subsection{Proof of Theorem \ref{Theorem 6} and Theorem \ref{Theorem 8}} 
The variational functional associated with equation \eqref{110} is given by
\[
\Phi(u)=I_s(u)-\lambda J_s(u)-\w{G}(u)-\frac{1}{r}\om|u|^rdx
\]
where
\[
\w{G}(u)=\om G(|x|,u)dx.
\]
 First we prove that $\Phi$ satisfies the $(PS)$ condition.

 \begin{lemma}\label{Lemma3}
     Suppose $1/p>(N-2)/N$. If \eqref{111} holds, then $\Phi$ satisfies the $(PS)$ condition.
 \end{lemma}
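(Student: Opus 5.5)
The plan has two steps: show that every (PS) sequence $(u_j)$ is bounded in $E=\e$, then extract a strongly convergent subsequence using $(H_7)$ (Lemma \ref{lemma1bis}) and the compact embeddings of Theorem \ref{Theorem 1}.

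\emph{Boundedness.} Suppose $\Phi(u_j)\to c$ and $\Phi'(u_j)\to0$. I would consider
\[
\Phi(u_j)-\tfrac1{r}\Phi'(u_j)u_j
=\Big(\tfrac12-\tfrac1r\Big)\dev{u_j}+\Big(\tfrac1{2p}-\tfrac1r\Big)\col{u_j}
-\lambda\Big(\tfrac1q-\tfrac1r\Big)\sca{u_j}-\om\Big(G(|x|,u_j)-\tfrac1r g(|x|,u_j)u_j\Big)dx .
\]
Since $r>2p>q$, both coefficients on the left are positive. The left side is $c+o(1)+o(1)\norm{u_j}$. By \eqref{111}, the $G$ terms are bounded by $C\int(|u_j|^{q_3}+|u_j|^{q_4})$ with $q<q_3<q_4<r$.

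The difficulty is that these lower-order $L^{q_i}$ terms need not be dominated by the positive terms. Absorbing them requires $\int|u_j|^r$, which enters with coefficient $0$ in this combination. So I would instead use $\Phi(u_j)-\theta\,\Phi'(u_j)u_j$ with $\theta\in(1/r,1/(2p))$. Then the gradient and Hartree coefficients remain positive, and $\int|u_j|^r$ appears with the positive coefficient $\theta-1/r$.

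For $t\geq0$ and $q\le q_i<r$, Young's inequality gives $t^{q}\le \varepsilon t^r+C_\varepsilon t^{q_{i}}$ in one direction. More usefully, it gives $t^{q_i}\le \varepsilon t^r + C_\varepsilon t^{q}$, and also $t^q\le \varepsilon t^r+C_\varepsilon t^{\ell}$ for a lower exponent $\ell\in(\p,q)$.

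To close the estimate I would control $\int|u|^{\ell}$ by the interpolation-type Coulomb–Sobolev inequality $\int|u|^\ell\le C\|\nabla u\|_2^{a}\big(\col{u}\big)^{b}$, where the homogeneity is below that of the left terms. Concretely, $\ell$ is taken near $\p$ so that the resulting powers are sublinear in the quantity $\dev{u}+\col{u}$. This is available from \cite{MR3568051} in the regime $\p<\ell<2^*$.

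Combining these, I get
\[
\dev{u_j}+\col{u_j}+\int|u_j|^r\le C+C\,\big(\dev{u_j}+\col{u_j}\big)^{\gamma}+o(1)\norm{u_j}
\]
with $\gamma<1$. This gives the bound on $\norm{u_j}$, because $\norm{u}^2$ is comparable to $\dev{u}+(\col{u})^{1/p}$, and the Hartree power $1/p<1$ handles the mixed homogeneity. The main obstacle is this absorption step: checking that the exponents produced by Young's inequality and the embedding are genuinely sublinear. This is where the hypothesis $2p<2^*$, i.e. $1/p>(N-2)/N$, is used, since it allows $r\in(2p,2^*)$ and $\theta$ with the required signs.

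\emph{Convergence.} Passing to a subsequence, $u_j\weak u$ in $E$. The exponents $q,q_3,q_4,r$ all lie in $(\p,2^*)$, so by Theorem \ref{Theorem 1} we have $u_j\to u$ strongly in each of those $L^{\ell}$ spaces. The growth condition \eqref{111} and Hölder's inequality then give
\[
\om\big(\lambda|u_j|^{q-2}u_j+g(|x|,u_j)+|u_j|^{r-2}u_j\big)(u_j-u)\,dx\to0 .
\]
Together with $\Phi'(u_j)(u_j-u)\to0$, this yields $A_s(u_j)(u_j-u)\to0$. Since $A_s(u)(u_j-u)\to0$ by weak convergence, we obtain $(A_s(u_j)-A_s(u))(u_j-u)\to0$, and Lemma \ref{lemma1bis} gives $u_j\to u$ strongly in $E$.
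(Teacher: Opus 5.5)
Your proof is correct, but your boundedness step takes a different route from the paper's.

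\textbf{The paper's route.} The paper argues by contradiction inside the scaling framework. Assuming $\norm{u_j}\to\infty$, it renormalizes $\w u_j=(u_j)_{t_j}\in\M_s$ with $\w t_j=I_s(u_j)^{1/s}\to\infty$. From $\Phi(u_j)=c+o(1)$, interpolating $L^{q_3},L^{q_4}$ between $L^q$ and $L^r$, it reads off $\int|\w u_j|^r=O(\w t_j^{-\sigma(r-q)})$. It then interpolates between $L^l$, $l\in(\p,q)$ (where $\M_s$ is bounded), and $L^r$ to kill the $g$-terms and $\int|\w u_j|^q$. Finally $\Phi(u_j)-\frac1r\Phi'(u_j)u_j$ forces $\w u_j\to0$, contradicting $\w u_j\in\M_s$. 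Strong convergence is delegated to Proposition \ref{PS compact}.

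\textbf{Your route.} You derive a direct a priori bound. Taking $\theta\in(1/r,1/(2p))$ keeps a positive multiple of $\int|u_j|^r$. This absorbs the $g$-terms, so no sign condition on $G$ is needed. Through $t^q\le\varepsilon t^r+C_\varepsilon t^\ell$, it also trades the scale-invariant term $\lambda\int|u_j|^q$ for $\int|u_j|^\ell$ with $\ell<q$, which a Coulomb--Sobolev product inequality controls sublinearly.

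\textbf{What each buys.} Both proofs rest on the same fact: $L^\ell$ with $\ell\in(\p,q)$ is ``subscaled''. The paper encodes this through boundedness of $\M_s$ in $L^l$ and explicit powers of $\w t_j$. That fits the abstract scaled-operator framework, where no product inequality is among the hypotheses. Yours is a classical Ambrosetti--Rabinowitz-type estimate that avoids renormalization and gives an explicit bound. Your convergence step is Proposition \ref{PS compact} made explicit.

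\textbf{The step you flag as the main obstacle.} It does hold, and for every $\ell\in(\p,q)$, not only $\ell$ near $\p$. Write $X=\dev{u}$ and $Y=\col{u}$. Apply the embedding of Theorem \ref{Theorem 1} to $u(\cdot/R)$ and optimize over $R$. This gives $\int|u|^\ell\le C X^aY^b$ with $2a+2pb=\ell$ and $(N-2)a+(N+\alpha)b=N$. Both $a,b>0$ because $2pN/(N+\alpha)<\p<\ell<2^*$; the first inequality is equivalent to $1/p>(N-2)/(N+\alpha)$. Solving the system, $a+b$ is affine and increasing in $\ell$. At $\ell=q$ one gets $a=\alpha/(2+\alpha)$ and $b=2/(2+\alpha)$, so $a+b=1$. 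This is forced, since $u\mapsto u_t$ multiplies $X$, $Y$ and $\sca{u}$ by the same factor $t^s$. Hence $a+b<1$ for all $\ell<q$, and your absorption closes.

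\textbf{A minor slip.} Your aside $t^q\le\varepsilon t^r+C_\varepsilon t^{q_i}$ is false near $t=0$ when $q<q_i$, but you never use it.
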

 \begin{proof}
       First we prove that the sequence is bounded. Suppose that $\|u_j\|\to\infty$. Define
  \begin{eqnarray*}
      t_j=\dfrac{1}{I_s(u_j)^{1/s}},\quad \w{u}_j=(u_j)_{t_j},\quad\w{t_j}=\dfrac{1}{t_j}=I_s(u_j)^{1/s},
  \end{eqnarray*}
  then $u_j=(\w{u}_j)_{\w{t_j}}$. We have $\w{t_j}\to\infty$ since $\|u_j\|\to\infty$. Then $\Psi(u_j)=c+o(1)$ and $\Psi^\prime(u_j)u_j$ become
  \begin{eqnarray}\label{2PS-1}
      \w{t_j}^s\seq{\frac{1}{2}\dev{\w{u}_j}+\frac{1}{2p}\col{\w{u}_j}-\frac{\lambda}{q}\sca{\w{u}_j}}\nonumber\\
      =\int_{\R^N}G(|x|,u_j)dx+\frac{1}{r}\om|u_j|^rdx+c+o(1)
  \end{eqnarray}
  and
  \begin{eqnarray}\label{2PS-2}
      \w{t_j}^s\seq{\dev{\w{u}_j}+\col{\w{u}_j}-\sca{\w{u}_j}}\nonumber\\
     = \int_{\R^N}g(|x|,u_j)u_jdx+\om|u_j|^rdx+o(\w{t_j}^{s/2})
  \end{eqnarray}
  respectively. We have
  \begin{eqnarray*}
      \om|u_j|^rdx=\w{t_j}^{r\sigma-N}\om|\w{u}_j|^rdx.
  \end{eqnarray*}

 Also,
 \begin{eqnarray}\label{nPS}
     \w{t_j}^s\seq{\frac{1}{2}\dev{\w{u}_j}+\frac{1}{2p}\col{\w{u}_j}-\frac{\lambda}{q}\sca{\w{u}_j}}\nonumber\\
      =\frac{1}{r}\om|u_j|^rdx-\frac{a_3}{q_3}\om|u_j|^{q_3}dx\nonumber\\-\frac{a_4}{q_4}\om|u_j|^{q_4}dx+c+o(1)
 \end{eqnarray}
  
  by \eqref{2PS-1} and \eqref{111}. 

  Since $q<q_3<q_4<r$, we have the interpolation inequalities
  \[
  \om|u|^{q_i}dx\leq\seq{\om|u|^qdx}^{1-\theta_i}\seq{\om|u|^rdx}^{\theta_i},\quad 
  \]
  where $i=3,4$, and $\theta_i=(q_i-q)/(r-q)$. This implies that
  \[
  t^{\sigma(q_i-q)}\om|\w{u}_j|^{q_i}dx\leq c_i\seq{t^{\sigma(r-q)}\om|\w{u}_j|^rdx}^{\theta_i}
  \]
  since $\w{u}_j$ is a bounded sequence in $L^q(\R^N)$. This together with $\eqref{2PS-1}$ gives
  \begin{eqnarray*}
       \frac{1}{2}\dev{\w{u}_j}+\frac{1}{2p}\col{\w{u}_j}\geq\frac{\w{t}_j^{\sigma(r-q)}}{r}\om|\w{u}_j|^{r}dx\\
       -\w{c}_1\seq{\w{t}_j^{\sigma(r-q)}\om|\w{u}_j|^rdx}^{\theta_1}-\w{c}_2\seq{\w{t}_j^{\sigma(r-q)}\om|\w{u}_j|^rdx}^{\theta_2}
       \end{eqnarray*}
       for some constants $\w{c}_i>0$, $i=1,2.$.
Since the left hand side of the above inequality is bounded, this implies that
\begin{eqnarray}\label{2PS-3}
    \om|\w{u}_j|^rdx=O(\w{t}_j^{-\sigma(r-q)}).
\end{eqnarray}
 Since $(\w{u}_j)$ is a bounded sequence in $L^l(\R^N)$ for all $l\in(\p,q),$ the interpolation inequality
 \[
 \om|u|^{q_3}dx\leq|u|_l^{l(1-\theta)}|u|_r^{r\theta},
 \]
 where $q_3=(1-\theta)l+\theta r$, implies
 \[
 \w{t}_j^{\sigma(q_3-q)}\om|\w{u}_j|^{q_3}dx=O\seq{\w{t}_j^{-\sigma(r-q)\frac{q_3-l}{r-l}+\sigma(q_3-q)}},
 \]
 and this implies that $\w{t}_j^{\sigma(q_3-q)}\om|\w{u}_j|^{q_3}dx\to0$ as $j\to\infty$ since $-\sigma(r-q)\frac{q_3-l}{r-l}+\sigma(q_3-q)<0$. For the same reason, $\w{t}_j^{\sigma(q_4-q)}\om|\w{u}_j|^{q_3}dx\to0$ as $j\to\infty$. Note that \eqref{2PS-3} also implies, after an interpolation argument, that $\sca{\w{u}_j}\to0$. Using those facts and \eqref{111}, it follows, after dividing \eqref{2PS-2} by $r$ and subtracting it from \eqref{2PS-1} that 
 \[
 \seq{\frac{1}{2}-\frac{1}{r}}\dev{\w{u}_j}+\seq{\frac{1}{2p}-\frac{1}{r}}\col{\w{u}_j}\leq o(1)
 \]
 which is a contradiction, because $\w{u}_j\in\M_s$ for all $j$. The desired conclusion now follows from Proposition \ref{PS compact}.
\end{proof}

\begin{proof}[Proof of Theorem \ref{Theorem 6}]
The variational functional associated with equation \eqref{110} is given by
\[
\Phi(u)=I_s(u)-\lambda J_s(u)-\om G(|x|,u)dx-\frac{1}{r}\om|u|^rdx.
\]
In particular,
\begin{eqnarray}\label{27-1}
    \Phi(u_t)=t^s\seq{1-\frac{\lambda}{\w{\Psi}(u)}}-\om G(|x|,u_t)dx-\frac{t^{\sigma r-N}}{r}\om|u|^rdx\quad\forall u\in\M_s,
\end{eqnarray}
consequently
\begin{eqnarray}\label{27-2}
    \Phi(u_t)=t^s\seq{1-\frac{\lambda}{\w{\Psi}(u)}+o(1)}\quad\text{as }t\to0
\end{eqnarray}
uniformly on $\M_s$ by \eqref{111}.

   If $\lambda<\lambda_1$ then
   \[
   \Phi(u_t)\geq t^s\seq{1-\frac{\lambda^+}{\lambda_1}+o(1)}\quad\text{as }t\to0
   \]
   uniformly on $\M_s$, where $\lambda^+=\max\set{\lambda,0}$, so $\Phi$ has the mountain pass geometry, and consequently equation \eqref{110} has a solution. Now, assume that $\lambda_k<\lambda<\lambda_{k+1}$ for some $k\geq1$. Since $\M_s\backslash\w{\Psi}_\lambda$ is an open symmetric set of index $k$ by Theorem \ref{Theorem 301} \ref{301-3}, then it has a compact symmetric subset $A_0$ of idex $k$ (see the proof of Proposition 3.1 in Degiovanni and Lancelotti \cite{MR2371112}). Let $B_0=\w{\Psi}_{\lambda_{k+1}}$, hence $i(\M_s\backslash B_0)=k$. 

   As in Theorem \ref{scaled linking theorem}, let $R>\rho>0$, let $e\in\M\backslash A_0$, and let
    \begin{eqnarray*}
        X&=&\set{(\pi((1-\tau)u+\tau e))_t:u\in A_0,\tau\in[0,1], 0\leq t\leq R},\\
        A&=&\set{u_t:u\in A_0, 0\leq r\leq R}\cup\set{(\pi((1-\tau)u+\tau e))_R:u\in A_0, \tau\in[0,1]},\\
        B&=&\set{u_\rho:u\in B_0}.
    \end{eqnarray*}
    If $u\in A_0$, then \eqref{27-1} and \eqref{112} gives
    \[
    \Psi(u_t)\leq-\frac{t^{\sigma r-N}}{r}\om|u|^rdx,\quad\forall t>0,
    \]
    which implies that
   \begin{equation}\label{27-3}
       \sup_{u\in A_1}\Phi(u)\leq0,
   \end{equation}
    where $A_1=\set{u_t:u\in A_0, 0\leq r\leq R}$, for any $R>0$. Also, it is easy to see that since $A_0$ is compact, there exists $R>0$ large enough so that
    \begin{equation}\label{27-4}
         \sup_{u\in A_2}\Phi(u)\leq0,
    \end{equation}
    where $A_2=\set{(\pi((1-\tau)u+\tau e))_R:u\in A_0, \tau\in[0,1]}$.
    If $u\in B_0$, \eqref{27-2} implies
    \begin{equation*}
        \Phi(u_t)\geq t^s\seq{1-\frac{\lambda}{\lambda_{k+1}}+o(1)}\quad\text{as }t\to0,
    \end{equation*}
    uniformly on $B_0$. Therefore there exists $\rho>$ small enough so that
    \begin{equation}\label{27-5}
        \inf_{u\in B}\Phi(u)>0.
    \end{equation}
    Combining \eqref{27-3}-\eqref{27-5} we have the first inequality in \eqref{scalin1}. From the fact that $X$ is a compact set, the second inequality in \eqref{scalin1} also holds. Since $\Phi$ satisfies the $(PS)$ condition from Lemma \eqref{Lemma3}, Theorem \ref{scaled linking theorem} gives us a nontrivial critical point of $\Phi$.
\end{proof}

\begin{proof}[Proof of Theorem \ref{Theorem 8}]
    In this proof we will also employ Theorem \ref{scaled linking theorem}. Denoting by $\Phi$ the variational functional associated with our problem, note that it satisfies the $(PS)$ condition by Lemma \ref{Lemma3}, and we also have
    \[
\Phi(u)=I_s(u)-\lambda J_s(u)-\mu\om G(|x|,u)dx-\frac{1}{r}\om|u|^rdx.
\]
In particular,
\begin{eqnarray}\label{28-1}
    \Phi(u_t)=t^s\seq{1-\frac{\lambda}{\w{\Psi}(u)}}-\mu\om G(|x|,u_t)dx-\frac{t^{\sigma r-N}}{r}\om|u|^rdx\quad\forall u\in\M_s,
\end{eqnarray}

    As in the proof of Theorem \ref{Theorem 6}, it is easy to see that if $\lambda<\lambda_1$, then $\Phi$ has the mountain pass geometry. So we can assume that $\lambda_k<\lambda<\lambda_{k+1}$ for some $k\geq1$. Since $\M_s\backslash\w{\Psi}_\lambda$ is an open symmetric set of index $k$ by Theorem \ref{Theorem 301} \ref{301-3}, it has a compact symmetric subset $A_0$ of idex $k$ (see the proof of Proposition 3.1 in Degiovanni and Lancelotti \cite{MR2371112}). Let $B_0=\w{\Psi}_{\lambda_{k+1}}$, hence $i(\M_s\backslash B_0)=k$. 

   As in Theorem \ref{scaled linking theorem}, let $R>\rho>0$, let $e\in\M\backslash A_0$, and let
    \begin{eqnarray*}
        X&=&\set{(\pi((1-\tau)u+\tau e))_t:u\in A_0,\tau\in[0,1], 0\leq t\leq R},\\
        A&=&\set{u_t:u\in A_0, 0\leq r\leq R}\cup\set{(\pi((1-\tau)u+\tau e))_R:u\in A_0, \tau\in[0,1]},\\
        B&=&\set{u_\rho:u\in B_0}.
    \end{eqnarray*}
    Since $\set{\pi((1-\tau)u+\tau e):u\in A_0, \tau\in[0,1]}$ is compact, it is easy to see that there exists $R>0$ large enough such that
   \begin{equation}\label{28-2}
       \sup_{u\in A_1}\Phi(u)\leq0,
   \end{equation}
   where $A_1=\set{(\pi((1-\tau)u+\tau e))_R:u\in A_0, \tau\in[0,1]}.$
   
    Since $A_0$ is a compact subset of $\M_s$, and the latter is bounded away from zero, we have that $\om|u|^rdx$ is bounded away from zero on $A_0$.
    This together with \eqref{111} gives
    \[
    \Phi(u_t)\leq|\mu| c_1 t^{\sigma q_3-N}+|\mu| c_2t^{\sigma q_4-N}-c_3t^{\sigma r-N}
    \]
    for all $u\in A_0$ and $t\geq0$ for some constants $c_i>0$, $i=1,2,3$. The above inequality gives us
   \begin{eqnarray*}
       \Phi(u_t)&\leq&|\mu| c_
    1t^{\sigma q_3-N}-(c_3/2)t^{\sigma r-N}+|\mu| c_2t^{\sigma q_4-N}-(c_3/2) t^{\sigma r-N}\nonumber\\
    &\leq&\max_{t\geq0}\set{|\mu| c_1t^{\sigma q_3-N}-(c_3/2)t^{\sigma r-N}}+\max_{t\geq0}\set{|\mu| c_1t^{\sigma q_3-N}-(c_3/2) t^{\sigma r-N}}\nonumber\\
    &=&\w{c}_1|\mu|^{(\sigma r-N)/\sigma(r-q_3)}+\w{c}_2|\mu|^{(\sigma r-N)/\sigma(r-q_4)},
    \end{eqnarray*}
    for all $t\geq0$, $u\in  A_0$, and some constants $\w{c}_i>0$, $i=1,2$. Consequently
    \begin{equation}\label{28-3}
        \sup_{u\in A_2}\Phi(u)\leq\w{c}_1|\mu|^{(\sigma r-N)/\sigma(r-q_3)}+\w{c}_2|\mu|^{(\sigma r-N)/\sigma(r-q_4)},
    \end{equation}
    where $A_2=\set{u_t:u\in A_0, 0\leq r\leq R}$.

   Now, by \eqref{111} and \eqref{28-1}, we have
   \[
     \Phi(u_t)\geq t^s\seq{1-\frac{\lambda}{\lambda_{k+1}}+o(1)}\quad\text{as }t\to0
   \]
   uniformly on $B_0$, consequently there exists $\rho>0$ small enough so that
   \begin{equation}\label{28-4}
       \inf_{u\in B}\Phi(u)>0.
   \end{equation}

   Combining \eqref{28-2}-\eqref{28-4} we get that the first inequality in \eqref{scalin1} holds if $|\mu|$ is small enough. We also have the second inequality in \eqref{scalin1} since $X$ is compact. Theorem \ref{scaled linking theorem} now gives us a nontrivial critical point of $\Phi$.
   \end{proof}

   \subsection{Proof of Theorem \ref{Theorem 9} and Theorem \ref{Theorem 10}}

   Here $N\geq 3$. The energy functional associated with problem \eqref{114} is given by

   \begin{eqnarray*}
        \Phi(u)=\frac{1}{2}\dev{u}+\frac{1}{2p}\col{u}-\frac{\lambda}{q}\sca{u}\nonumber\\
        -\frac{\mu}{\beta}\om|u|^\beta dx-\frac{1}{2^*}\cri{u}.
   \end{eqnarray*}
As it is customary for many nonlinear elliptic PDEs, the following lemma provides a Pohozaev type identity for the critical points of $\Phi.$

\begin{lemma}\label{pohozaev identity}
Let $\lambda,\mu\in \R$ and $N, \alpha, p, \beta, q$ as in Theorem \ref{Theorem 9} and Theorem \ref{Theorem 10} and $u\in E^{\alpha,p}(\R^N)$ be a critical point of the functional $\Phi.$ Then, $u$ satisfies the identity
\begin{eqnarray*}
         \dfrac{N-2}{2}\dev{u}+\dfrac{N+\alpha}{2p}\col{u}-\dfrac{N\lambda}{q}\om|u|^qdx\nonumber\\
         -\frac{N\mu}{\beta}\om|u|^\beta dx-\dfrac{N}{2^*}\cri{u}=0.
     \end{eqnarray*}
\end{lemma}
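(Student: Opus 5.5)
The identity is the derivative at $t=1$ of $\Phi$ along the dilation curve $u^t(x)=u(x/t)$, which is the classical Pohozaev computation. I will make it rigorous by reducing to the Pohozaev identity already available for this class of operators, \cite[Proposition 5.5]{MR3568051}, which was used above to verify $(H_{12})$.

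\textbf{Step 1 (formal computation).} For $t>0$ set $u^t(x)=u(x/t)$. A change of variables gives
\[
\dev{u^t}=t^{N-2}\dev{u},\qquad \col{u^t}=t^{N+\alpha}\col{u},\qquad \om|u^t|^m dx=t^N\om|u|^m dx
\]
for $m=q,\beta,2^*$. The middle scaling follows from the double-integral form \eqref{Hartreeterm}, since the Riesz kernel has degree $-(N-\alpha)$ and the double integral contributes a factor $t^{2N}$. Differentiating $t\mapsto\Phi(u^t)$ at $t=1$ produces exactly the stated expression. Formally, this derivative equals $\Phi'(u)\,(-x\cdot\nabla u)$, which vanishes because $u$ is a critical point.

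\textbf{Step 2 (justification).} The test function $x\cdot\nabla u$ need not belong to $E^{\alpha,p}(\R^N)$, so I will follow the route of \cite[Proposition 5.5]{MR3568051}. Write the equation as
\[
-\Delta u+(I_\alpha\star|u|^p)|u|^{p-2}u=h(u),\qquad h(u)=\lambda|u|^{q-2}u+\mu|u|^{\beta-2}u+|u|^{2^*-2}u .
\]
The main obstacle is regularity: the proposition needs $u\in W^{2,r}_{\rm loc}$ or $C^1$, and here the right-hand side has critical growth. I will handle this as follows.

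1. Apply a Brezis--Kato/Moser iteration to get $u\in L^\infty_{\rm loc}$. The nonlocal term has the good sign and can be discarded, or treated as a nonnegative potential. The term $|u|^{2^*-2}$ is in $L^{N/2}$, which is exactly the Brezis--Kato hypothesis.
2. Apply elliptic $L^r$ estimates to get $u\in W^{2,r}_{\rm loc}$. This uses that $I_\alpha\star|u|^p\in L^\infty_{\rm loc}$ once $u$ is locally bounded, together with the radial decay or integrability from the space.

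\textbf{Step 3 (truncation).} Multiply the equation by $\psi(x/R)\,x\cdot\nabla u$ with a cutoff $\psi$ and integrate by parts. The local terms give the standard contributions $\frac{N-2}{2}\dev{u}$ and $-N\om H(u)\,dx$ plus boundary errors. For the nonlocal term, use the symmetry of the kernel to get
\[
\om (I_\alpha\star|u|^p)\,x\cdot\nabla(|u|^p)/p\;\to\;-\frac{N+\alpha}{2p}\col{u}.
\]
Let $R\to\infty$. The error terms vanish by dominated convergence, because $|\nabla u|^2$, $|u|^m$ for $m\in\{q,\beta,2^*\}$ (via Theorem \ref{Theorem 1} and Sobolev) and $(I_\alpha\star|u|^p)|u|^p$ are all integrable.

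Dividing the resulting identity appropriately gives the claim. The only genuinely delicate point is the local regularity in the critical case; everything else mirrors the computation already used for \eqref{103}.
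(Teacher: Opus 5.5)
Your proposal is correct and follows essentially the same route as the paper: discard the nonnegative Hartree term (the paper does this via Kato's inequality, $-\Delta|u|\le V^+|u|$ with $V^+\in L^{N/2}_{\rm loc}$), bootstrap \`a la Brezis--Kato, upgrade to $W^{2,\sigma}_{\rm loc}$ via Riesz potential and Calder\'on--Zygmund estimates, and then test with a truncation of $x\cdot\nabla u$ (your explicit symmetrization of the nonlocal term spells out a step the paper calls ``the usual integration by parts argument''). One fix in Step 3: testing $-\Delta u$ against $x\cdot\nabla u$ gives $-\frac{N-2}{2}\int_{\R^N}|\nabla u|^2\,dx$, not $+\frac{N-2}{2}\int_{\R^N}|\nabla u|^2\,dx$; with that sign, your $-\frac{N+\alpha}{2p}$ Hartree contribution and the $-N\int_{\R^N}H(u)\,dx$ coming from the right-hand side combine to give exactly the stated identity.
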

\begin{proof}
Note that $u$ satisfies 
\begin{equation}\label{for reg}-\Delta u= V(x)u, \quad \quad \textrm{in} \,\, \mathcal D'(\R^N)
\end{equation}
where we have set $V=V^+-V^-,$ 
\begin{equation*}V^+=|u|^{2^*-2}+\mu |u|^{\beta-2}+\lambda |u|^{q-2}
\end{equation*}
and 
\begin{equation*}
V^-=\Big(I_\alpha \star |u|^p\Big)|u|^{p-2}.
\end{equation*}
Here we are assuming $\lambda,\mu>0,$ the other cases are similar, as what is to follow is not sensitive to their sign. 
By Kato's inequality we have

\begin{equation}\label{for reg2}
-\Delta |u|\leq -\Delta |u|+V^-|u|\leq V^+|u|,   \quad \quad \textrm{in} \,\, \mathcal D'(\R^N).
\end{equation}

Observe that by the range of the exponents $V^+\in L^{N/2}_{\textrm{loc}}(\R^N).$  Performing on \eqref{for reg2} classical bootstrap, see e.g. Struwe \cite[Lemma B.3]{MR1411681} (and also Trudinger \cite{MR240748}, and \cite[Remark 5.3]{MR3568051}) gives us that $u\in L^\sigma_{\textrm{loc}}(\R^N),$ for all $\sigma<\infty.$ By Riesz potential estimates, see e.g. \cite{MR1717817} (see also \cite{MR3436232}), we now infer that $I_\alpha \star |u|^p$, and therefore $Vu$ in \eqref{for reg}, are both in $L^\sigma_{\textrm{loc}}(\R^N)$ for all $\sigma <\infty.$ By the classical Calder\'on-Zygmund theorem, this implies that $u\in W^{2,\sigma}_{\textrm{loc}}(\R^N)$ for all $\sigma<\infty$ (and by Morrey's estimates that $u\in C^{1,\gamma}_{\textrm{loc}}(\R^N)).$ This regularity is enough to carry out the usual integration by parts argument after multiplying the equation by a suitable truncation of $x\cdot \nabla u(x),$ and this concludes the proof. 
\end{proof}

We now study the PS condition. We start with the following

  \begin{lemma}\label{lemma0}
    Assume $1/p>(1/2-1/\alpha)_+$. Let $(u_j)$ be a sequence in $E^{\alpha, p}(\R^N)$ such that $u_j\weak u$ for some $u\in E^{\alpha, p}(\R^N)$. Then
    \[
    \om(I_{\alpha/2}\star|u_j|^p)(I_{\alpha/2}\star|u_j|^{p-2}u_jv)dx\to\om(I_{\alpha/2}\star|u|^p)(I_{\alpha/2}\star|u|^{p-2}uv)dx
    \]
    for any $v\in E^{\alpha, p}(\R^N)$.
    \end{lemma}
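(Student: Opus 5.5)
We would write $\phi_j = I_{\alpha/2}\star|u_j|^p$ and $\psi_j = I_{\alpha/2}\star(|u_j|^{p-2}u_jv)$, with $\phi,\psi$ defined analogously for $u$. The aim is to show $\int \phi_j\psi_j \to \int\phi\psi$. Since $(u_j)$ is bounded in $E^{\alpha,p}(\R^N)$, $(\phi_j)$ is bounded in $L^2(\R^N)$. We split the difference as
\[
\om \phi_j\psi_j - \om\phi\psi = \om \phi_j(\psi_j-\psi)\,dx + \om (\phi_j-\phi)\psi\,dx .
\]

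\textbf{Second term.} We first show $\phi_j\weak\phi$ in $L^2$. By the local compactness of the embedding of $E^{\alpha,p}$ into $L^p_{\rm loc}$ (here the hypothesis $1/p>(1/2-1/\alpha)_+$ is used, as in \cite{MR3568051}), we may pass to a subsequence with $u_j\to u$ a.e. and in $L^p_{\rm loc}$. Then $|u_j|^p\to|u|^p$ in $L^1_{\rm loc}$, so $\phi_j\to\phi$ in the sense of distributions. Together with the $L^2$ bound this gives $\phi_j\weak\phi$ in $L^2$, and hence the second term tends to $0$ once we know $\psi\in L^2$.

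To see $\psi\in L^2$, observe that by the semigroup property
\[
\om \psi^2\,dx=\iint I_\alpha(x-y)\,|u|^{p-1}|v|(x)\,|u|^{p-1}|v|(y)\,dx\,dy .
\]
The kernel $I_\alpha$ is positive definite, so this bilinear form is a squared seminorm satisfying Cauchy--Schwarz. Applying the pointwise inequality $|u|^{p-1}|v|\le \frac{p-1}{p}|u|^p+\frac1p|v|^p$ (which is Young's inequality, as in Lemma \ref{lemma1bis}) and monotonicity of the form on nonnegative functions gives
\[
\norm[2]{\psi}\le C\big(\norm[2]{I_{\alpha/2}\star|u|^p}+\norm[2]{I_{\alpha/2}\star|v|^p}\big).
\]
A sharper Hölder-type version, $\norm[2]{\psi}\le \norm[2]{I_{\alpha/2}\star|u|^p}^{(p-1)/p}\norm[2]{I_{\alpha/2}\star|v|^p}^{1/p}$, follows from the positive-definite-kernel Hölder inequality of \cite{MR3568051}. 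The same bound shows $(\psi_j)$ is bounded in $L^2$ uniformly in $j$.

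\textbf{First term (main obstacle).} Since $\phi_j$ only converges weakly, we need $\psi_j\to\psi$ strongly in $L^2$, which amounts to
\[
\iint I_\alpha(x-y)\,w_j(x)\,w_j(y)\,dx\,dy\to0, \qquad w_j=(|u_j|^{p-2}u_j-|u|^{p-2}u)\,v .
\]
We first handle the case $v\in C_c^\infty$. Then $w_j$ is supported in a fixed ball $B$ and $w_j\to0$ in $L^1(B)$ by local convergence. To upgrade this to convergence in the Coulomb form, we use the Hardy--Littlewood--Sobolev inequality $\iint I_\alpha w w\lesssim\norm[2N/(N+\alpha)]{w}^2$. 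It then suffices that $w_j\to0$ in $L^{2N/(N+\alpha)}(B)$, i.e.\ that $|u_j|^{p-1}\to|u|^{p-1}$ in $L^{2N/(N+\alpha)}_{\rm loc}$, which means $u_j\to u$ in $L^{2N(p-1)/(N+\alpha)}_{\rm loc}$. This exponent is below the local compactness range: it is less than $2^*$ when $p$ is moderate, and for larger $p$ we use instead the local $L^{q}$ compactness available under $1/p>1/2-1/\alpha$, namely the compact embedding into $L^p_{\rm loc}$ combined with $2N(p-1)/(N+\alpha)<p$, which holds when $p<2N/(N-\alpha)$. Verifying this exponent bookkeeping is where we expect the real difficulty. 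If the bookkeeping fails, the fallback is to interpolate between $L^1_{\rm loc}$ convergence and the uniform bound on $\norm[2]{\psi_j}$ through the positive-definite structure of the form.

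\textbf{General $v$.} For $v\in E^{\alpha,p}$ we approximate by $v_n\in C_c^\infty$ in norm; density holds in these spaces by \cite{MR3568051}. The Hölder bound above gives $\norm[2]{I_{\alpha/2}\star(|u_j|^{p-2}u_j(v-v_n))}\le C\,\norm[2]{I_{\alpha/2}\star|v-v_n|^p}^{1/p}$ uniformly in $j$, and this controls the error. A standard $\varepsilon/3$ argument then concludes. Finally, since the limit is independent of the subsequence, the whole sequence converges.
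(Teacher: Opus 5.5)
\textbf{There is a genuine gap, in the first term: proving $\psi_j\to\psi$ in $L^2$.} The rest is fine: your treatment of the second term, the density step, and the bound $\|\psi\|_{2}\le \|I_{\alpha/2}\star|u|^p\|_{2}^{(p-1)/p}\|I_{\alpha/2}\star|v|^p\|_{2}^{1/p}$ all work. That bound is just H\"older's inequality on $\R^{2N}$ for the measure $I_\alpha(x-y)\,dx\,dy$, so no positive definiteness is needed.

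Reducing to $v\in C_c^\infty$ and using Hardy--Littlewood--Sobolev, you need $u_j\to u$ in $L^{2N(p-1)/(N+\alpha)}_{\rm loc}$. The cited results only give local control for exponents up to $\max\{2^*,p,q_{CS}\}$: Rellich, the compact embedding into $L^p_{\rm loc}$, and interpolation with the global $L^{q_{CS}}$ bound. This does not cover the range of the lemma. For $N=3$, $\alpha=2$ the hypothesis $1/p>(1/2-1/\alpha)_+=0$ allows every $p>1$. Taking $p=20$, you would need convergence in $L^{22.8}_{\rm loc}$, while $2^*=6$, $p=20$ and $q_{CS}=21$. More generally, for $N\ge 3$, $\alpha\le 2$ and $p$ large, $2N(p-1)/(N+\alpha)$ exceeds all three, since $2N/(N+\alpha)>\max\{1,4/(2+\alpha)\}$. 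The cited embeddings do not even place $u$ itself in that local space. Your fallback (``interpolate through the positive-definite structure'') is not an argument, so the proof stops exactly where you flagged it.

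\textbf{The missing idea is that nothing beyond a.e.\ convergence is needed.} The only other input is the weak $L^2$ convergence of $I_{\alpha/2}\star|u_j|^p$, which you already have. The tool is Young's inequality plus Fatou's lemma on $\R^{2N}$, which is the paper's proof.

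In your notation, set $w_j=(|u_j|^{p-2}u_j-|u|^{p-2}u)v$. Young gives, for each $\delta>0$, the pointwise bound $|w_j|\le h_j:=\delta(|u_j|^p+|u|^p)+C_\delta|v|^p$. Apply Fatou to $h_j(x)h_j(y)-|w_j(x)||w_j(y)|\ge 0$ with respect to $I_\alpha(x-y)\,dx\,dy$:
\begin{itemize}
\item $|w_j(x)||w_j(y)|\to 0$ a.e.;
\item the $C_\delta^2$ term does not depend on $j$;
\item the mixed terms $\iint I_\alpha(x-y)(|u_j|^p+|u|^p)(x)|v(y)|^p\,dx\,dy$ converge by the weak convergence;
\item the $\delta^2$ term is bounded.
\end{itemize}
Hence $\limsup_j\|\psi_j-\psi\|_2^2\le C\delta^2$ for every $\delta>0$. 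This works directly for every $v\in E^{\alpha,p}(\R^N)$, so your density argument becomes unnecessary.

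The paper does the same in one stroke, without splitting into two terms. It works with the full difference $F_j=\bigl||u_j(x)|^p|u_j(y)|^{p-2}u_j(y)v(y)-|u(x)|^p|u(y)|^{p-2}u(y)v(y)\bigr|$. This is dominated by $\varepsilon^{p/(p-1)}|u_j(x)|^p|u_j(y)|^p+\varepsilon^{-p}|u_j(x)|^p|v(y)|^p+|u(x)|^p|u(y)|^{p-1}|v(y)|$, and Fatou is applied to the difference.

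\textbf{A smaller point.} $L^1_{\rm loc}$ convergence of $|u_j|^p$ does not by itself give $\phi_j\to\phi$ in the sense of distributions, because $I_{\alpha/2}\star\eta$ is not compactly supported. You need a tail estimate, or you can cite the weak $L^2$ convergence from Mercuri et al.\ as the paper does.
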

\begin{proof}
     First note that, from the semigroup property of the Riez potential, it is equivalent to prove 
  \[
  \int_{\R^N}\int_{\R^N}\dfrac{|u_j(x)|^p|u_j(y)|^{p-2}u_j(y)v(y)}{|x-y|^{N-\alpha}}dxdy\to\int_{\R^N}\int_{\R^N}\dfrac{|u(x)|^p|u(y)|^{p-2}u(y)v(y)}{|x-y|^{N-\alpha}}dxdy.
  \]
  By Mercuri et al. \cite[Proposition 3.3 and Proposition 4.5]{MR3568051} for a renamed subsequence we have 
\begin{align*}
&u_j \rightarrow u, \qquad \textrm{in}\,\,L^p_{\textrm{loc}}(\R^N)\\
&u_j \rightarrow u, \qquad \textrm{a.e. on}\,\,\R^N\\
&I_{\alpha/2}\star|u_j|^p\weak I_{\alpha/2}\star|u|^p \,\,\textrm{in}\,\,L^2(\R^N).
\end{align*}
Set $$F_j=\Big| |u_j(x)|^p|u_j(y)|^{p-2}u_j(y)v(y)-|u(x)|^p|u(y)|^{p-2}u(y)v(y)\Big|, \qquad d\mu=I_\alpha(x-y)dxdy.$$

By Young's inequality we have, for every $\varepsilon>0$
\begin{align*}
    F_j&=\Big| |u_j(x)|^p|u_j(y)|^{p-2}u_j(y)v(y)-|u(x)|^p|u(y)|^{p-2}u(y)v(y)\Big|\\
    &\leq\varepsilon^{p/(p-1)}|u_j(x)|^p|u_j(y)|^p+\varepsilon^{-p}|u_j(x)|^p|v(y)|^p+\Big||u(x)|^p|u(y)|^{p-2}u(y)v(y)\Big|=:G_j.
\end{align*}
Applying Fatou's lemma to $G_j-F_j\geq0$ we obtain
\begin{align*}
    &\varepsilon^{p/(p-1)}\int_{\R^{2N}}|u(x)|^p|u(y)|^pd\mu+\varepsilon^{-p}\int_{\R^{2N}}|u(x)|^p|v(y)|^pd\mu\\
    &\leq\varepsilon^{p/(p-1)}\liminf_{j\to\infty}\int_{\R^{2N}}|u_j(x)|^p|u_j(y)|^pd\mu+\varepsilon^{-p}\lim_{j\to\infty}\om(I_{\alpha/2}\star|u_j|^p)(I_{\alpha/2}\star|v|^p)dx\\
    &-\limsup_{j\to\infty}\int_{\R^{2N}} F_jd\mu.
\end{align*}
Noting the cancellation due to the identity
\[
\int_{\R^{2N}}|u(x)|^p|v(y)|^pd\mu=\om(I_{\alpha/2}\star|u|^p)(I_{\alpha/2}\star|v|^p)dx=\lim_{j\to\infty}\om(I_{\alpha/2}\star|u_j|^p)(I_{\alpha/2}\star|v|^p)dx,
\]
and since $(u_j)_{j\in\N}$ is bounded in $\e$, we finally obtain, for some uniform constant $C>0$
\[
\limsup_{j\to+\infty}\int_{\R^{2N}}F_jd\mu\leq C\varepsilon^{p/(p-1)},
\]
and this concludes the proof.
\end{proof}

We now prove a local $(PS)$ condition for $\Phi$. Let
\begin{eqnarray}\label{best Sobolev constant}
    S=\inf_{u\in\D^{1,2}(\R^N)\backslash\set{0}}\dfrac{\dev{u}}{\seq{\cri{u}}^{1/2^*}}
\end{eqnarray}
be the best Sobolev constant.
\begin{lemma}\label{ps condition critical case}
    Assume $p<N/(N-2)$,  $\lambda>0$, and $\beta\in[q,2^*)$ hold, then $\Phi$ satisfies the $(PS)_c$ condition for all $0<c<\frac{1}{N}S^{N/2}$.
\end{lemma}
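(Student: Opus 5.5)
We follow the Brezis--Nirenberg scheme: first show that a $(PS)_c$ sequence is bounded, then pass to a weak limit, then show that the remainder carries either zero energy or at least $\frac1N S^{N/2}$. Throughout, $(u_j)\subset E=E^{\alpha,p}_{\text{rad}}(\R^N)$ satisfies $\Phi(u_j)\to c$ and $\Phi'(u_j)\to 0$ with $0<c<\frac1N S^{N/2}$.

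\textbf{Boundedness.} We compute $\Phi(u_j)-\frac1{2p}\Phi'(u_j)u_j$. The Hartree term cancels, and we are left with
\[
\Big(\frac12-\frac1{2p}\Big)\dev{u_j}+\lambda\Big(\frac1{2p}-\frac1q\Big)\sca{u_j}+\mu\Big(\frac1{2p}-\frac1\beta\Big)\om|u_j|^\beta dx+\Big(\frac1{2p}-\frac1{2^*}\Big)\cri{u_j}=c+o(1)+o(\|u_j\|).
\]
Since $q<2p<2^*$, the $\lambda$-term has a negative coefficient, and so may the $\mu$-term when $\beta<2p$. All other coefficients are positive.

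To absorb the bad terms we use the same scaling device as in Lemma~\ref{Lemma3}. Assuming $\|u_j\|\to\infty$, we write $u_j=(\w u_j)_{\w t_j}$ with $\w u_j\in\M_s$ and $\w t_j\to\infty$. The $L^q$ and $L^\beta$ norms of the $\w u_j$ are then controlled by the $L^{2^*}$ norm through interpolation with the bounded $L^l$ norms, $l\in(\p,q)$. Combining the scaled versions of $\Phi(u_j)$ and $\Phi'(u_j)u_j$ as in Lemma~\ref{Lemma3}, with $r$ replaced by $2^*$, forces $\frac12-\frac1{2^*}$ times the gradient part of $\w u_j$, plus a positive multiple of its Hartree part, to be $o(1)$. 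This contradicts $\w u_j\in\M_s$.

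An alternative route is to combine $\Phi(u_j)$, $\Phi'(u_j)u_j$ and a Pohozaev-type estimate, but the scaling argument is closer to what the paper already uses. This step is where the hypothesis $p<N/(N-2)$ enters, and I expect it to be the main obstacle: the lower-order terms $\lambda|u|^q$ and $\mu|u|^\beta$ must be dominated uniformly along the scaled sequence.

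\textbf{Weak limit.} With $(u_j)$ bounded, we pass to a subsequence with $u_j\weak u$ in $E$, $u_j\to u$ a.e., and $u_j\to u$ in $L^q$ and $L^\beta$. The strong convergence holds by the compact embedding of Theorem~\ref{Theorem 1}, since $q,\beta\in(\p,2^*)$; in the endpoint case $\beta=q$ we use the same embedding. By Lemma~\ref{lemma0} and weak convergence in $L^{2^*}$, we get $\Phi'(u)=0$. The Pohozaev identity of Lemma~\ref{pohozaev identity}, combined with $\Phi'(u)u=0$, then gives $\Phi(u)\ge 0$. Indeed, eliminating terms exactly as in the derivation of $(H_{12})$ leaves a nonnegative combination of the gradient and Hartree terms, because the $L^q$ term has exactly the scaling weight of $q=q_{CS}$ and the $\beta$ and $2^*$ terms appear with favourable signs. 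This sign bookkeeping should be checked carefully.

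\textbf{Compactness of the remainder.} Set $w_j=u_j-u$. By Brezis--Lieb for the gradient and $L^{2^*}$ terms, and the nonlocal Brezis--Lieb inequality \cite[Proposition 4.1]{MR3568051} for the Hartree term (the argument of Lemma~\ref{lemma1bis}), we obtain $\Phi(u_j)=\Phi(u)+\frac12\dev{w_j}+\frac1{2p}\col{w_j}-\frac1{2^*}\cri{w_j}+o(1)$. Similarly $o(1)=\dev{w_j}+\col{w_j}-\cri{w_j}+o(1)$.

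Let $b=\lim\dev{w_j}$ along a subsequence, and put $h=\lim\col{w_j}$. If $b>0$, then $\cri{w_j}\to b+h$ with $b+h\ge b$. The Sobolev inequality gives $b\ge S(b+h)^{2/2^*}\ge S b^{2/2^*}$, hence $b\ge S^{N/2}$. Then
\[
c\ge\Phi(u)+\Big(\frac12-\frac1{2^*}\Big)b+\Big(\frac1{2p}-\frac1{2^*}\Big)h\ge\frac1N S^{N/2},
\]
which contradicts the assumption on $c$. Therefore $b=0$, so $\cri{w_j}\to h$ and hence $h=0$ as well. Thus $(A_s(u_j)-A_s(u))(u_j-u)\to0$, and Lemma~\ref{lemma1bis} gives $u_j\to u$ in $E$.
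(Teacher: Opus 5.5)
Your outline is correct and is essentially the paper's proof: boundedness by the scaling contradiction on $\M_s$ (where $2p<2^*$ makes the Hartree coefficient $\frac1{2p}-\frac1{2^*}$ positive), Nehari plus Pohozaev to get $\Phi(u)\ge 0$ for $\beta\ge q$ (this tacitly needs $\mu\ge 0$, which the paper also uses), and Brezis--Lieb plus the Sobolev inequality to push the level up to at least $\frac1N S^{N/2}$ otherwise. One detail needs tightening: the paper uses only the one-sided nonlocal estimate \eqref{ps9}, and so eliminates the $2^*$ term first via $\Phi(u_j)-\frac1{2^*}\Phi'(u_j)u_j$, so that only lower bounds are required; your two exact expansions need the equality form of the nonlocal Brezis--Lieb lemma (this does hold here, since $u_j\to u$ a.e.\ and in $L^p_{\text{loc}}$), or else you should carry $H_j=\col{u_j}-\col{u}$, whose limit is nonnegative, in place of $\col{w_j}$, because with two one-sided inequalities your final combination does not close.
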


\begin{proof}
    Let $(u_n)\subset \e$ be a $(PS)_c$ sequence of $\Phi$. Then
    \begin{eqnarray}\label{ps1}
        \dfrac{1}{2}\dev{u_j}+\dfrac{1}{2p}\col{u_j}-\dfrac{\lambda}{q}\om|u_j|^qdx
        -\frac{\mu}{\beta}\om|u_j|^\beta dx\nonumber\\-\dfrac{1}{2^*}\om|u_j|^{2^*}dx=c+o(1)
    \end{eqnarray}
    and
    \begin{eqnarray}\label{ps2}
        \om\nabla u_j\cdot\nabla vdx+\om(I_{\alpha/2}\star|u_j|^p)(I_{\alpha/2}\star|u_j|^{p-2}u_jv)dx-\lambda\om|u_j|^{q-2}u_jv\nonumber\\
        -\mu\om|u_j|^{\beta-2}u_jv-\om|u_j|^{2^*-2}u_jv=o(\|v\|)
    \end{eqnarray}
    for all $v\in\e$. Taking $v=u_j$ in $\eqref{ps2}$ gives
    \begin{eqnarray}\label{k1}
        \dev{u_j}+\col{u_j}-\lambda\om|u_j|^qdx-\mu\om|u_j|^\beta dx\nonumber\\
        -\om|u_j|^{2^*}dx=o(\|u_j\|).
    \end{eqnarray}
    First we show that $(u_j)$ is bounded in $\e$. Suppose $\|u_j\|\to\infty$ for a renamed subsquence. Set
    \begin{eqnarray*}
        t_j=t_{u_j}=\dfrac{1}{I_s(u_j)^{1/s}},\quad\w{u}_j=(u_j)_{t_j},\quad\w{t}_j=\dfrac{1}{t_j}=I_s(u_j)^{1/s}.
    \end{eqnarray*}
    Then $\w{u}_j\in\M_s$ and 
    \begin{eqnarray*}
        u_j=(\w{u}_j)_{\w{t}_j}=\w{t}_j^\sigma\w{u}_j(\w{t}_j\cdot).
    \end{eqnarray*}
    Since $\M_s$ is a bounded manifold, $(\w{u}_j)$ is bounded. Since $\|u_j\|\to\infty$, $I_s(u_j)\to\infty$ and hence $\w{t}_j\to\infty$. Note that
    \begin{eqnarray*}
        \|u_j\|=\|(\w{u}_j)_{\w{t}j}\|=O(\w{t}_j^{s/2}),
    \end{eqnarray*}
     then $\eqref{ps1}$ and $\eqref{ps2}$ can be written as 
     \begin{eqnarray}\label{ps3}
         \w{t}_j^s\seq{\dfrac{1}{2}\dev{\w{u}_j}+\dfrac{1}{2p}\col{\w{u}_j}}=\lambda\dfrac{\w{t}_j^{s}}{q}\om|\w{u}_j|^qdx\nonumber\\
         +\frac{\mu\w{t}_j^{\sigma \beta-N}}{\beta}\om|\w{u}_j|^\beta dx+\dfrac{\w{t}_j^{s\frac{N}{N-2}}}{2^*}\cri{\w{u}_j}+c+o(1)
     \end{eqnarray}
     and
     \begin{eqnarray}\label{ps4}
          \w{t}_j^s\seq{\dev{\w{u}_j}+\col{\w{u}_j}}=\lambda\w{t}_j^{s}\om|\w{u}_j|^qdx \nonumber\\
         +\mu \w{t}_j^{\sigma\beta-N}\om|\w{u}_j|^\beta dx+\w{t}_j^{s\frac{N}{N-2}}\cri{\w{u}_j}+o(\w{t}_j^{s/2})
     \end{eqnarray}
     respectively. Since $\w{t}_j\to\infty$, dividing $\eqref{ps4}$ by $\w{t}_j^s$ gives
     \begin{eqnarray*}
         \dev{\w{u}_j}+\col{\w{u}_j}=\lambda\om|\w{u}_j|^qdx+\w{t}_j^{\frac{2}{N-2}s}\om|\w{u}_j|^{2^*}dx\\
         +\mu\w{t_j}^{\sigma (\beta-q)}\om|\w{u}_j|^\beta dx+o(1)
     \end{eqnarray*}
     Since the left-hand side is bounded and $\lambda>0$, this in turn gives 
     \begin{eqnarray*}
         \cri{\w{u}_j}=O(\w{t}_j^{-\frac{2}{N-2}s})
     \end{eqnarray*}
     Since $(\w{u}_j)$ is bounded in $L^l(\R^N)$ for all $l\in(\p,q)$, the interpolation inequality
     \begin{eqnarray*}
         \om|\w{u}_j|^\beta dx\leq|\w{u}_j|_l^{l(1-\theta)}|\w{u}_j|_{2^*}^{2^*\theta},
     \end{eqnarray*}
     where $\beta=(1-\theta)l+\theta2^*$, implies
     \begin{eqnarray}\label{ps5}
         \w{t}_j^{\sigma(\beta-q)}\om|\w{u}_j|^\beta dx=O(\w{t}_j^{-\frac{2}{N-2}s\frac{\beta-l}{2^*-l}+\sigma(\beta-q)}),
     \end{eqnarray}
     Note that the fact that
     \begin{eqnarray*}
         -\frac{2}{N-2}s\frac{\beta-l}{2^*-l}+\sigma(\beta-q)<0
     \end{eqnarray*}
     is equivalent to $\beta<2^*$. This together with $\eqref{ps5}$ implies 
     \begin{eqnarray*}
         \w{t}_j^{\sigma(\beta-q)}\om|\w{u}_j|^\beta dx\to0
     \end{eqnarray*}
     as $j\to\infty.$ By interpolation, one also sees that $\om|\w{u}_j|^qdx\to0$. Now multiplying $\eqref{ps3}$ by $2^*$, subtracting $\eqref{ps4}$, and dividing by $\w{t}_j^s$ gives
     \begin{eqnarray*}
         \seq{\frac{2^*}{2}-1}\dev{\w{u}_j}+\seq{\frac{2^*}{2p}-1}\col{\w{u}_j}=o(1).
     \end{eqnarray*}
     This implies $\w{u}_j\to0$ in $\e$, contradicting the fact that $\w{u}_j\in\M_s$.

     Since $(u_j)$ is bounded and $\e$ is a reflexive Banach space, a renamed subsequence of $(u_j)$ converges weakly to some $u\in\e$. Then $u_j$ also converges to $u$ strongly in $L^l(\R^N)$ for all $l\in(\p,2^*)$, weakly in $L^{2^*}(\R^N)$, and a.e. in $\R^N$ for a further subsequence. Now passing to the limit in $\Phi^\prime(u_j)u$ and using Lemma $\ref{lemma0}$, we have
     \begin{eqnarray}\label{ps6}
         \dev{u}+\col{u}-\lambda\om|u|^qdx\nonumber\\-\mu\om|u|^\beta dx-\cri{u}=0,
     \end{eqnarray}
     and $u$ also satisfies the Phozaev identity
     \begin{eqnarray}\label{ps7}
         \dfrac{N-2}{2}\dev{u}+\dfrac{N+\alpha}{2p}\col{u}-\dfrac{N\lambda}{q}\om|u|^qdx\nonumber\\
         -\frac{N\mu}{\beta}\om|u|^\beta dx-\dfrac{N}{2^*}\cri{u}=0.
     \end{eqnarray}
     by Lemma \ref{pohozaev identity}. Set $v_j=u_j-u$. We will show that $v_j\to0$ in $\e$ for a renamed subsequence. By the classical Brezis-Lieb lemma and Mercuri et al.\cite[Proposition 4.1]{MR3568051}, 
     \begin{eqnarray}\label{ps8}
         \dev{u_j}-\dev{u}=\dev{v_j}+o(1),
     \end{eqnarray}
     \begin{eqnarray}\label{ps9}
         \col{u_j}-\col{u}\geq\col{v_j}+o(1)
     \end{eqnarray}
     and
     \begin{eqnarray}\label{ps10}
         \cri{u_j}-\cri{u}=\cri{v_j}+o(1)
     \end{eqnarray}

     Subtracting $\eqref{ps6}$ from $\eqref{k1}$ and combining with $\eqref{ps8}-\eqref{ps10}$ gives
     
     \begin{eqnarray}\label{ps11}
         \dev{v_j}+\col{v_j}\leq\cri{v_j}+o(1)\nonumber\\
         \leq S^{-2^*/2}\seq{\dev{v_j}}^{2^*/2}+o(1).
     \end{eqnarray}
     So it suffices to show that $\dev{v_j}\to0$ for a renamed subsequence. Suppose this is not the case. Then $\eqref{ps11}$ gives
     \begin{eqnarray}\label{ps12}
         \dev{v_j}\geq S^{N/2}+o(1).
     \end{eqnarray}
     Dividing $\eqref{k1}$ by $2^*$ and subtracting from $\eqref{ps1}$ gives

     \begin{eqnarray}\label{ps13}
         c=\frac{1}{N}\dev{u_j}+\seq{\frac{1}{2p}-\frac{1}{2^*}}\col{u_j}\nonumber\\-\lambda\seq{\frac{1}{q}-\frac{1}{2^*}}\om|u|^qdx-\mu\seq{\frac{1}{\beta}-\frac{1}{2^*}}\om|u|^\beta dx+o(1).
     \end{eqnarray}
     Since
     \begin{eqnarray*}
         \dev{u_j}\geq S^{N/2}+\dev{u}+o(1)
     \end{eqnarray*}
     by $\eqref{ps8}$ and $\eqref{ps12}$, and
     \begin{eqnarray*}
         \col{u_j}\geq\col{u}+o(1)
     \end{eqnarray*}
     by $\eqref{ps9}$, $\eqref{ps13}$ gives
     \begin{eqnarray*}
         c\geq\frac{1}{N}S^{N/2}+\frac{1}{N}\dev{u}+\seq{\frac{1}{2p}-\frac{1}{2^*}}\col{u}\\-\lambda\seq{\frac{1}{q}-\frac{1}{2^*}}\om|u|^qdx-\mu\seq{\frac{1}{\beta}-\frac{1}{2^*}}\om|u|^\beta dx
         \end{eqnarray*}
     Multiplying $\eqref{ps6}$ by $\frac{1}{N}+\frac{N-2}{2s}$ and $\eqref{ps7}$ by $-\frac{1}{s}$ adding them together and then subtracting it from the above inequality gives
     \begin{eqnarray}\label{ps14}
         c\geq\dfrac{1}{N}S^{N/2}+\mu\seq{\frac{1}{N}+\frac{N-2}{2s}-\frac{N}{\beta s}-\frac{1}{\beta}+\frac{1}{2^*}}\om|u|^\beta dx
         +\frac{1}{N}\cri{u}.
     \end{eqnarray}
     Note that
     \begin{eqnarray*}
         \seq{\frac{1}{N}+\frac{N-2}{2s}-\frac{N}{\beta s}-\frac{1}{\beta}+\frac{1}{2^*}}\geq0
     \end{eqnarray*}
     if and only if $\beta\geq2\, (2p + \alpha)/(2 + \alpha)$.
     This together with $\eqref{ps14}$ and $\mu>0$ implies that $c\geq\frac{1}{N}S^{N/2}$, which is a contradiction.
     \end{proof}

     \begin{proof}[Proof of Theorem \ref{Theorem 9}]
        With Lemma \ref{ps condition critical case} at hand, we will employ Theorem \ref{m1} with $c^*=S^{N/2}/N$. Let $\varepsilon\in(0,\lambda_{k+m}-\lambda_{k+m-1})$. Then
        \[
        i(\M_s\backslash\w{\Psi}_{\lambda_{k+m-1+\varepsilon}})=k+m-1
        \]
        by Theorem \ref{Theorem 301} \ref{301-3}. Since $\M_s\backslash\w{\Psi}_{\lambda_{k+m-1}+\varepsilon}$ is an open symmetric set of index $k+m-1$, then it has a compact symmetric subset $C$ with $i(C)=k+m-1$ (see the proof of Proposition 3.1 in Degiovanni and Lancelotti \cite{MR2371112}). We will apply Theorem \ref{m1} with $A_0=C$ and $B_0=\w{\Psi}_{k}$. If $\lambda_1=\cdots=\lambda_k$ then $B_0=\w{\Psi}_{\lambda_k}=\M_s$ and consequently
        \[
        i(\M_s\backslash B_0)=0\leq k-1.
        \]
        by Proposition \ref{Proposition 300}. If $\lambda_{l-1}<\lambda_{l}=\lambda_k=\cdots=\lambda_{k+m-1}$ for some $2\leq l\leq k$, then
        \[
        i(\M_s\backslash B_0)=i(\M_s\backslash\w{\Psi}_{l})=l-1\leq k-1
        \]
        by Theorem \ref{Theorem 301} \ref{301-3}. 

         Let $R>\rho>0$ and let
    \begin{eqnarray*}
        X&=&\set{u_t:u\in A_0, 0\leq t\leq R},\\
        A&=&\set{u_R:u\in A_0},\\
        B&=&\set{u_{\rho}:u\in B_0}.
        \end{eqnarray*}
        For $u\in\M_s$ and $t\geq0$ we have
        \begin{equation}\label{210-1}
            \Phi(u_t)=t^s\seq{1-\frac{\lambda}{\w{\Psi}(u)}}-\frac{t^{\frac{N}{N-2}s}}{2^*}\om|u|^{2^*}dx.
        \end{equation}
        Since $\M_s$ is bounded, we have
        \[
        \Phi(u_t)\geq t^s\seq{1-\frac{\lambda}{\w{\Psi}(u)}+o(1)}\quad\text{as }t\to0 
        \]
        uniformly on $B_0$. This implies that there exists $\rho>0$ small enough so that
        \[
        \inf_{u\in B}\Psi(u)>0.
        \]
        Now note that, for any $u\in A_0\subset\M_s\backslash\w{\Psi}_{\lambda_{k+m-1}+\varepsilon}$,
        \[
        \frac{1}{q}\om|u|^qdx>\frac{1}{\lambda_{k+m-1}+\varepsilon}>\frac{1}{\lambda_{k+m}}
        \]
        since $\varepsilon<\lambda_{k+m}-\lambda_{k+m-1}$. Now using the fact that $\M_s$ is bounded in $L^l(\R^N)$ for all $u\in(\p,q)$ together with the interpolation inequality
        \[
        |u|_q^q\leq|u|_l^{l(1-\theta)}|u|_{2^*}^{2^*\theta}
        \]
        where $q=(1-\theta)l+\theta2^*$, we have that $|u|_{2^*}$ is bounded away from zero on $A_0$ for a constant that does not depend on $\varepsilon$. 

        \eqref{210-1} together with $\lambda_{k+m-1}=\lambda_k$ gives
        \[
        \Psi(u_t)=t^s\seq{1-\frac{\lambda}{\lambda_{k}+\varepsilon}}-c_1t^{\frac{N}{N-2}s}
        \]
        for some constant $c_1>0$, which yields to
        \[
        \sup_{u\in A}\Phi(u)\leq R^s-c_1R^{\frac{N}{N-2}s}\leq0
        \]
        if $R$ is sufficiently large. This implies the first inequality in \eqref{m1-2}. We also have
        \[
        \sup_{u\in X}\Phi(u)\leq\sup_{t\ge0}\set{t^s\seq{1-\frac{\lambda}{\lambda_{k}+\varepsilon}}-c_1t^{\frac{N}{N-2}s}}=\frac{2}{Nc_1^{(N-2)/2}}\seq{1-\frac{\lambda}{\lambda_{k}+\varepsilon}}^{N/2},
        \]
        which gives us the second inequality in \eqref{m1-2} if $\lambda$ is close enough of $\lambda_k$ and $\varepsilon$ is sufficiently small. The desired conclusion now follows from Theorem \ref{m1}.
     \end{proof}

\begin{proof}[Proof of Theorem \ref{Theorem 10}]
    Taking into account Lemma \ref{best Sobolev constant}, as in the preceding proof we will employ Theorem \ref{m1} with $c^*=S^{N/2}/N$. Let $\lambda\in\R$ and $m\geq1$. The variational functional associated with problem \eqref{114} is given by
    \[
    \Phi(u)=I_s(u)-\lambda J_s(u)-\frac{\mu}{\beta}\om|u|^\beta dx-\frac{1}{2^*}\cri{u}.
    \]
    In particular,
    \begin{equation}\label{212-1}
        \Phi(u_t)=t^s\seq{1-\frac{\lambda}{\w{\Psi}(u)}}-\frac{\mu t^{\sigma\beta-N}}{\beta}\om|u|^\beta dx-\frac{t^{\sigma2^* -N}}{2^*}\om|u|^{2^*}dx,
    \end{equation}
    for all $u\in\M_s$ and $t\geq0$. Now take $k\geq1$ large enough so that $\lambda<\lambda_{k+1}$ and $\lambda_{k+m-1}<\lambda_{k+m}$. Since $\M_s\backslash\w{\Psi}_{\lambda_{k+m}}$ is an open symmetric set with index $k+m-1$ by Theorem \ref{Theorem 301} \ref{301-3}, it has a compact symmetric subset $C$ of index $k+m-1$  (see the proof of Proposition 3.1 in Degiovanni and Lancelotti \cite{MR2371112}). We will employ Theorem \ref{m1} with $A_0=C$ and $B_0=\w{\Psi}_{\lambda_{k+1}}$. 

    Let $R>\rho>0$ and let
    \begin{eqnarray*}
        X&=&\set{u_t:u\in A_0, 0\leq t\leq R},\\
        A&=&\set{u_R:u\in A_0},\\
        B&=&\set{u_{\rho}:u\in B_0}.
        \end{eqnarray*}

    Note that if $u\in A_0$ then
    \[
    \frac{1}{q}\om|u|^qdx>\frac{1}{\lambda_{k+m}}
    \]
    Consequently, since $A_0$ is bounded on $L^l(\R^N)$ for all $l\in(\p,q)$, the interpolation inequalities
    \[
    |u|_q^q\leq|u|_l^{\theta_1 l}|u|_\beta^{(1-\theta_1)\beta},\quad |u|_q^q\leq|u|_l^{\theta_2l}|u|_{2^*}^{(1-\theta_2)2^*},
    \]
    where $q=\theta_1l+(1-\theta_1)\beta$ and $q=\theta_2 l+(1-\theta_2)2^*$ give that $\om|u|^\beta dx$ and $\cri{u}$ are bounded away from zero on $A_0$ by constants that depend only on $m$. It is also easy to see that $|\lambda|/\w{\Psi}(u)$ is also bounded from above on $A_0$. Equation \eqref{212-1} now gives us that
   \begin{equation}\label{212-2}
        \Phi(u_t)\leq c_1t^{s}-\mu c_2t^{\sigma\beta-N}-c_3t^{\sigma 2^*-N}
   \end{equation}
    for all $u\in A_0$, $t\geq0$ and some constants $c_i>0$, $i=1,2,3.$ The last inequality implies that 
    \[
    \sup_{u\in A}\Phi(u)\leq c_1 R^s-c_3R^{\sigma 2^*-N}\leq0
    \]
    for large $R>0$.

    Now, for all $u\in B_0$
    \[
    \Phi(u_t)\geq t^s\seq{1-\frac{\lambda}{\lambda_{k+1}}+o(1)}\quad\text{as }t\to0
    \]
    uniformly on $B_0$ since $B_0$ is a bounded set. This gives
    \[
    \inf_{u\in B}\Phi(u)>0
    \]
    for a sufficiently small $\rho>0$. This implies the first inequality in \eqref{m1-2}, and from \eqref{212-2} we also have
    \[
    \sup_{u\in X}\Phi(u_t)\leq\max_{t \geq0}\set{c_1t^s-\mu t^{\sigma\beta-N}}=\frac{\w{C}}{\mu^{s/\sigma(\beta-q)}},
    \]
    for some constant $\w{C}>0$, which implies the second inequality in \eqref{m1-2} if $\mu$ is large enough. The desired conclusion now follows from Theorem \ref{m1-2}.
\end{proof}

\section*{Acknowledgements}
Carlo Mercuri is a member of the group GNAMPA of Istituto Nazionale di Alta Matematica (INdAM).

\end{document}